\def\namedlabel#1#2{\begingroup
    #2%
    \def\@currentlabel{#2}%
    \phantomsection\label{#1}\endgroup
}
\theoremstyle{plain}
\newtheorem{theorem}{Theorem}[section]
\newtheorem{proposition}[theorem]{Proposition}
\newtheorem{corollary}[theorem]{Corollary}
\newtheorem{lemma}[theorem]{Lemma}
\theoremstyle{definition}
\newtheorem{definition}[theorem]{Definition}
\newtheorem{remark}[theorem]{Remark}
\def\bA{\mathbb{A}}
\def\bC{\mathbb{C}}
\def\bF{\mathbb{F}}
\def\bH{\mathbb{H}}
\def\bN{\mathbb{N}}
\def\bK{\mathbb{K}}
\def\bP{\mathbb{P}}
\def\bQ{\mathbb{Q}}
\def\bZ{\mathbb{Z}}
\def\cC{\mathcal{C}}
\def\cD{\mathcal{D}}
\def\cF{\mathcal{F}}
\def\cI{\mathcal{I}}
\def\cJ{\mathcal{J}}
\def\cL{\mathcal{L}}
\def\cM{\mathcal{M}}
\def\cO{\mathcal{O}}
\def\cP{\mathcal{P}}
\def\cQ{\mathcal{Q}}
\def\cR{\mathcal{R}}
\def\cS{\mathcal{S}}
\def\cU{\mathcal{U}}
\def\cV{\mathcal{V}}
\def\cT{\mathcal{T}}
\def\fp{\mathfrak{p}}
\def\fq{\mathfrak{q}}
\def\inv{\mathrm{inv}}
\def\deg{\mathbf{deg}}
\def\Gal{\mathrm{Gal}}
\def\Norm{\mathrm{Norm}}
\def\Trace{\mathrm{Trace}}
\def\ulim{\mathbf{ulim}}
\def\alg{\mathbf{alg}}
\def\GF{\mathrm{GF}}
\def\red{\mathrm{red}}
\def\sign{\mathrm{sign}}
\def\sep{\mathbf{sep}}
\def\icard{\mathrm{icard}}
\begin{document}

\title[Defining the polynomial ring over an ultra-finite field]{Local symbols and a first-order definition of the polynomial ring over an ultra-finite field in its fraction field}

\author{Dong Quan Ngoc Nguyen}

\address{Department of Mathematics, George Washington University, DC 20052}

\date{October 16, 2023}

\email{\href{mailto:dongquan.ngoc.nguyen@gmail.com}{\tt dongquan.ngoc.nguyen@gmail.com}\\
\href{mailto:dongquan.nguyen@gwu.edu}{\tt dongquan.nguyen@gwu.edu}}

\maketitle

\tableofcontents

\begin{abstract}

In this paper, we prove the existence of a first-order definition of the polynomial ring over a nonprincipal ultraproduct of finite fields of unbounded cardinalities in its fraction field by a universal-existential formula in the language of rings augmented by an additional constant symbol $t$. As a consequence, we prove that the full first-order theory of the rational function field over a nonprincipal ultraproduct of finite fields of characteristic $0$ is undecidable.

\end{abstract}

\section{Introduction}
\label{sec-Introduction}

Hilbert's Tenth Problem is the tenth on the list of mathematical problems posed by David Hilbert \cite{Hilbert} in 1900 that asks for an algorithm which, for an arbitrary polynomial equation with integer coefficients and a finite number of unknowns, can decide whether the equation has an integral solution, that is, the positive existential theory $\mathrm{Th}_{\exists} \bZ$ of the integers $\bZ$ is undecidable. The work of Davis, Putnam, Robinson \cite{DPR1961}, and Matiyasevich \cite{matiyasevich} confirms that such an algorithm does not exist, and thus $\mathrm{Th}_{\exists}\bZ$ is in fact undecidable.

A natural generalization of Hilbert's Tenth Problem is to determine whether there exists an algorithm for the analogous problem with $\bZ$ replaced by other rings or fields such as the field $\bQ$ of rational numbers, number fields, or the rings of integers of number fields. The decidability of $\mathrm{Th}_{\exists}\bQ$ remains open. In \cite{rob49}, Robinson proves that the full first-order theory $\mathrm{Th}\bQ$ of $\bQ$ is undecidable by showing that there exists a first-order definition of $\bZ$ in $\bQ$, and thus the undecidability of $\mathrm{Th}\bQ$ follows from that of $\mathrm{Th}\bZ$. The strategy used in Robinson's Theorem 3.1 in \cite{rob49} suggests a \textit{definability} approach for Hilbert's Tenth Problem for $\bQ$: if there were an existential definition of $\bZ$ in $\bQ$, then the undecidability of $\mathrm{Th}_{\exists}\bQ$ easily follows from that of $\mathrm{Th}_{\exists}\bZ$ of $\bZ$.

The main aim of this paper is to prove  the existence of a first-order definition of the polynomial ring over a (nonprincipal) ultraproduct of finite fields of unbounded cardinalities \footnote{Our method also works when the ultraproduct is taken over the same finite field in which case the ultraproduct becomes a finite field itself. But a first-order definition of $\bF_q[t]$ in $\bF_q(t)$ is already known in \cite{EM18}, \cite{daans-21}, or \cite{tyrell}, and so for the sake of simplicity, we always work with an ultraproduct of finite fields of unbounded cardinalities.} in its fraction field.

\begin{theorem}
\label{thm-Main-Theorem-in-Nguyen-paper}
(See Corollary \ref{cor-main-corollary1})

Let $F$ be an ultraproduct of finite fields $\bF_{q_s}$ with $s \in S$, of unbounded cardinalities with respect to a nonprincipal ultrafilter $\cD$, where the $\bF_{q_s}$ are finite fields with $q_s$ elements and the $q_s$ are powers of primes $p_s$ for each $s \in S$. Then the polynomial ring $F[t]$ can be defined in the rational function field $F(t)$ by a first-order formula in $\cL_{\mathrm{rings}} \cup \{t\}$, where $\cL_{\mathrm{rings}}$ denotes the language of rings.

\end{theorem}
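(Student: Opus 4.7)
The goal is to exhibit a universal-existential formula in $\cL_{\mathrm{rings}} \cup \{t\}$ whose solution set in $F(t)$ is exactly $F[t]$. The starting point is the valuation-theoretic description $F[t] = \bigcap_\pi O_{(\pi)}$, where $\pi$ ranges over monic irreducibles of $F[t]$, so the task is to convert this infinite intersection of local conditions into a single first-order sentence using local symbols at the places of $F(t)/F$.

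First, I would transfer arithmetic properties from the finite fields $\bF_{q_s}$ to $F$ via \L{}os's theorem. Because the $q_s$ are unbounded, $F$ is a pseudofinite field of characteristic $0$ with absolute Galois group $\widehat{\bZ}$, trivial Brauer group, and $C_1$ behaviour; in particular every element of $F^\times$ is a norm from each finite cyclic extension, and every quadratic form in $\geq 3$ variables over $F$ is isotropic. Consequently, the classical Faddeev-type exact sequence for $\Br(F(t))$ should lift to this setting, giving a Hasse principle over $F(t)$: a quaternion class $(a,b)_{F(t)}$ is trivial if and only if the local Hilbert symbols $(a,b)_v$ vanish at every place $v$ of $F(t)/F$, and global triviality is detected \emph{existentially} through the solvability of an associated norm equation / three-variable Pfister form.

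Next, I would use the tame Hilbert symbol at $v=v_\pi$ to detect integrality. The residue field $\kappa(v_\pi)$ is a finite extension of $F$ and hence again pseudofinite, so $(a,b)_{v_\pi}$ is computable from the $v_\pi$-orders and residues of $a,b$; in particular, a pole of $x$ at $\pi$ forces a nontrivial symbol $(x-c,\pi)_{v_\pi}$ for suitable $c \in F$. Combining this with the Hasse principle, the condition "$v_\pi(x) \geq 0$ for every monic irreducible $\pi$" is equivalent to the condition "for every auxiliary parameter $b$ in a suitably chosen definable family $\Phi(t) \subseteq F(t)$, a quaternion class built from $x$ and $b$ is globally trivial over $F(t)$." Since global triviality admits an existential description by representation of $1$ through a Pfister form, this packages the full intersection into a single $\forall \exists$ sentence of the form $\forall b\,\bigl(\Phi(b,t)\Rightarrow\exists u_1,\dots,u_n\,\Psi(x,b,u_1,\dots,u_n,t)\bigr)$.

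The main obstacle I anticipate is twofold: (i) establishing the Faddeev/Hasse package in the form stated for pseudofinite base fields, which must be set up carefully since $F$ is infinite and its good cohomological properties are available only through \L{}os transfer from the $\bF_{q_s}$ together with the cohomological dimension bound; and (ii) engineering the definable family $\Phi$ of auxiliary parameters $b$ so that the outer universal quantifier \emph{simultaneously} witnesses every finite pole of $x$, rather than handling one prime $\pi$ at a time, and so that the inner existential witnesses remain uniformly polynomial in $b$. The local symbol formalism advertised in the title is precisely the tool that should realise (ii) uniformly across all places, and it is in this step that I expect the real work of the paper to lie.
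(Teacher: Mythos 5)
Your high-level blueprint is in the right spirit: the paper does work with local symbols at the places of $\bK = F(t)$, does prove a Hasse-style reciprocity statement (Theorem~\ref{thm-analogue-of-the-Hilbert-reciprocity-law}, built on Andriychuk's exact sequence), and does package the membership condition into a $\forall\exists$ formula whose outer universal quantifier ranges over a definable family of auxiliary parameters $(a,b)$. But there are several concrete gaps that would prevent your sketch from closing.

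First, your detector for a finite pole does not actually detect poles. You propose that $v_\pi(x)<0$ forces $(x-c,\pi)_{v_\pi}\neq 1$ for a suitable constant $c\in F$. The tame-symbol formula (Corollary~\ref{cor-Nguyen-formula-for-local-symbols-K_P-quadratic-symbol}) depends only on $v_\pi(x-c)$ and the residue of $(x-c)/\pi^{v_\pi(x-c)}$; when $v_\pi(x)<0$ and $c$ is a constant, the Laurent leading coefficient of $x-c$ at $\pi$ is independent of $c$, so the choice of $c$ cannot flip the symbol. The paper avoids this issue entirely: it never tries to detect a pole at a single place with a single symbol. Instead it builds the Poonen--Koenigsmann--Park machinery: the trace sets $\cS_{a,b}$, the rings $\cT_{a,b}=\cS_{a,b}+\cS_{a,b}$ (Lemma~\ref{lem-the-set-T-a-b} identifies these with semilocal rings $\bigcap_{P\in\Delta_{a,b}}\bA_P$), the Jacobson radical $\cJ(\cT_{a,b})$, and the sets $\widetilde{\cR_{a,b}}$; then it proves $\bA\cup\bA_\infty=\bigcap_{(a,b)\in\cD}\widetilde{\cR_{a,b}}$ (Theorem~\ref{thm-main-thm1}). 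The pole-elimination happens through this intersection, not through a direct symbol computation on $x$.

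Second, what makes this intersection work is that for each finite prime $P$ one must produce a pair $(a_P,b_P)\in\cD$ with $\Delta_{a_P,b_P}=\{P,\infty\}$ (Lemma~\ref{lem-quaternion-algebra-ramified-exactly-at-a-given-prime-P-and-infinity-main-lem1}). This requires an auxiliary partner prime $Q$ of the correct degree-parity and residue class modulo $P$, which in turn needs a Dirichlet-type theorem on primes in arithmetic progressions for $\GF(\kappa)[t]$ (Corollary~\ref{cor-BB-thm-for-ultra-finite-fields}, obtained by transfer from Bank--Bary-Soroker via \L{}o\'s). Your sketch says "engineer the definable family $\Phi$" but does not acknowledge that this construction of auxiliary primes is where a genuine number-theoretic input is needed; without it the outer quantifier cannot trap every place.

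Third, you do not say how to obtain an \emph{explicit}, computable form of the local symbol on which everything rests. The residue fields here are uncountable, so Serre's formula in terms of an abstract topological generator of $\widehat{\bZ}$ is not by itself usable. The paper's central technical contribution is an explicit formula for $(\alpha,\beta)_P$ expressed through the internal cardinality $\kappa^{\deg P}$ and the ultra-Frobenius (Theorem~\ref{thm-Nguyen-formula-for-local-symbols-K_P} and Corollary~\ref{cor-Nguyen-formula-for-local-symbols-K_P-quadratic-symbol}), and it is this formula that feeds the reciprocity law and all ramification computations in Section~\ref{sec-definition-of-A-in-K}. Your sketch treats the symbol as a black box.

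Finally, two smaller points. The intersection $\bigcap_{(a,b)\in\cD}\widetilde{\cR_{a,b}}$ yields $\bA\cup\bA_\infty$, not $\bA$; one must still separate $\bA$ from $\bA_\infty$, and the paper does this via Koenigsmann's existential definition of the constant field in a function field over a large field (Theorem~\ref{thm-Koenigsmann-2002} and Corollary~\ref{cor-Koe-2002}), which is where the trailing existential quantifiers of Corollary~\ref{cor-main-corollary1} arise. Your sketch omits this step. Also, you assume $F$ has characteristic $0$; the theorem as stated allows equal characteristics $p_s=p$, giving $F$ of positive characteristic, so the argument must not rely on characteristic $0$ (the paper's route does not).
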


\begin{remark}

\begin{itemize}

\item [(i)] Suppose that the primes $p_s$, the characteristics of the finite fields $\bF_q$, are distinct for $\cD$-almost all $s \in S$. Then \L{}o\'s' theorem implies that $F$ has characteristic $0$. By the Lefschetz principle, the field $\bC$ of complex numbers is a nonprincipal ultraproduct of $\bF_{q_s}^{\alg}$, where the $\bF_{q_s}^{\alg}$ denotes the algebraic closure of the $\bF_{q_s}$. Thus $F(t)$ can be viewed as a subfield of $\bC(t)$ by embedding $F$ into $\bC$. The main motivation for studying definability questions for $F[t]$ in $F(t)$ stems from an analogue of Hilbert's Tenth Problem for $\bC(t)$ and the full first-order decidability problem of $\bC(t)$, both of which remains open.

\item [(ii)] Suppose that $p_s = p$ for some prime $p$ for $\cD$-almost all $s \in S$. Then $F$ is an infinite extension of the finite field $\bF_p$. In fact, by choosing an appropriate ultrafilter $\cD$, we can obtain an ultraproduct $F$ of characteristic $p > 0$ that contains the algebraic closure $\bF^{\alg}_p$ of $\bF_p$. Thus Theorem \ref{thm-Main-Theorem-in-Nguyen-paper} extends a first-order definability of $\bF_q[t]$ in $\bF_q[t]$ in \cite{EM18}, \cite{daans-21}, or \cite{tyrell} with $\bF_q$ being a finite extension of $\bF_p$ to an infinite extension of $\bF_p$ to obtain a first-order definability of $F[t]$ in $F(t)$, where $F$ is an infinite extension of $\bF_p$.

\end{itemize}

\end{remark}

Using R. Robinson's Theorem \cite{RROB}, we obtain the undecidability of the full first-order theory of $F(t)$, where $F$ is an ultraproduct of finite fields $\bF_{q_s}$ of characteristic $0$.

\begin{corollary}
\label{cor-main-cor-in-the-introduction}
(see Corollary \ref{cor-undecidability-of-GF(kappa)(t)})

Let $F$ be an ultraproduct of finite fields $\bF_{q_s}$ of characteristic $0$. Then the full first-order theory of $F(t)$ is undecidable.

\end{corollary}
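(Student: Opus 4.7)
The plan is to derive the corollary as a formal reduction combining Theorem \ref{thm-Main-Theorem-in-Nguyen-paper} with R. Robinson's classical undecidability result for polynomial rings cited as \cite{RROB}. First I would observe that since $F$ is a characteristic $0$ ultraproduct of the $\bF_{q_s}$, the prime ring $\bZ$ embeds into $F$, and therefore $\bZ[t] \hookrightarrow F[t]$. R. Robinson's theorem then tells us that the full first-order theory $\mathrm{Th}(F[t])$ is undecidable: it produces a first-order definition of $\bZ$ in $F[t]$ using the transcendental $t$ as a distinguished constant, whence the undecidability of $\mathrm{Th}(\bZ)$ (Gödel/Matiyasevich) transfers to $\mathrm{Th}(F[t])$.

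Next I would transfer undecidability from $F[t]$ to $F(t)$ using the main theorem as a relativization device. Theorem \ref{thm-Main-Theorem-in-Nguyen-paper} supplies a first-order formula $\varphi(x)$ in $\cL_{\mathrm{rings}} \cup \{t\}$ such that for every $a \in F(t)$, one has $F(t) \models \varphi(a)$ if and only if $a \in F[t]$. Given any sentence $\psi$ in $\cL_{\mathrm{rings}} \cup \{t\}$, I would relativize each quantifier of $\psi$ to $\varphi$ in the usual way, obtaining a sentence $\psi^{\varphi}$ such that
\[
F[t] \models \psi \quad \Longleftrightarrow \quad F(t) \models \psi^{\varphi},
\]
where the crucial point is that $t$ denotes the same element in both structures, so the constant symbol behaves consistently under relativization. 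Since $\psi \mapsto \psi^{\varphi}$ is a computable transformation, a decision procedure for $\mathrm{Th}(F(t))$ would yield one for $\mathrm{Th}(F[t])$, contradicting the previous step.

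The conceptually hard work is already packaged inside Theorem \ref{thm-Main-Theorem-in-Nguyen-paper}; here the only mild subtlety is checking that R. Robinson's theorem, which is typically stated for polynomial rings over integral domains containing $\bZ$, applies verbatim when the coefficient ring is an ultraproduct of finite fields of characteristic $0$ rather than a number field. This is immediate because R. Robinson's construction uses only that the coefficient ring is an integral domain of characteristic $0$ (so that $\bZ$ sits inside and can be isolated as the set of elements definable from arithmetic relations involving $t$), a property guaranteed by \L{}o\'s' theorem for $F$. No further verification specific to the ultraproduct setting is needed, and the reduction goes through.
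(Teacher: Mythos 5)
Your argument is correct and follows essentially the same reduction chain as the paper: R.~Robinson's theorem makes $\mathrm{Th}(F[t])$ undecidable (the paper's version defines $\bQ$ in $F[t]$ and invokes J.~Robinson's undecidability of $\mathrm{Th}(\bQ)$, whereas you cite a $\bZ$-version; both work), and then the paper's first-order definition of $F[t]$ in $F(t)$ from Corollary~\ref{cor-main-corollary1} transfers undecidability to $\mathrm{Th}(F(t))$ via the standard relativization-of-quantifiers argument that you spell out explicitly.
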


Note that the ultraproduct $F$ in the above result can be embedded into $\bC$, and thus the above result provides an infinite family of subfields $F$ of $\bC$ such that the full first-order theory of $F(t)$ is undecidable.

In order to prove Theorem \ref{thm-Main-Theorem-in-Nguyen-paper}, we follow a similar strategy of the proof of Tyrrell \cite[Corollary 2.9]{tyrell} (see also \cite{tyrell-thesis}) in which he provides a new universal definition of $\bF_q[t]$ in $\bF_q[t]$, where $\bF_q$ denotes the finite field with $q$ elements. The methods used in Tyrrell \cite{tyrell} follow the ideas and strategy in the work of Poonen \cite{poonen-2009} and Koenigsmann \cite{koe-2016}. Before the work of Tyrrell \cite{tyrell}, the universal definability of $\bF_q[t]$ in $\bF_q(t)$ had been proved by Eisentr\"ager and Morrison \cite{EM18} in which they adapted Park's universal definition of the ring $\cO_K$ of integers of a number field $K$ (see Park \cite{park}) to the function field context. Daans \cite{daans-21} unifies the work of Park \cite{park}, and Eisentr\"ager and Morrison \cite{EM18} to prove the universal definability of the ring $\cO_{S, K}$ of $S$-integers in a global field $K$, where $S$ is a nonempty finite set of primes of $K$. The strategy of proving definability results in the work of Park \cite{park}, Eisentr\"ager and Morrison \cite{EM18}, Daans \cite{daans-21}, and Tyrrell \cite{tyrell} follows the work of Poonen \cite{poonen-2009} that is refined in a breakthrough paper by Koenigsmann \cite{koe-2016} in which he establishes a first universal definition of $\bZ$ in $\bQ$. A key ingredient in the proofs of these papers is the formula given by Serre \cite[Proposition 8, p.210]{serre-local-fields} for local symbols over a field $K$ that is complete under a discrete valuation such that its residue field is quasi-finite (see Subsection \ref{subsec-local-symbols} for its notion). In the case of a global field $K$ that is considered in \cite{koe-2016, park, EM18, daans-21, tyrell}, the residue fields of $\fp$-adic completions of $K$ are all finite fields which are quasi-finite, and there is an \textit{explicit} formula for local symbols given by Serre \cite[Corrollary to Lemma XIV 3.2]{serre-local-fields} that only depends on the $\fp$-adic valuation and the finite cardinality of the residue field. Such an explicit formula for local symbols provides an effective tool for studying ramification of certain quaternion algebras that are used to provide a universal definition of the ring of integers of the global field $K$.

In the case of the rational function field $\bK = F(t)$ studied in this paper, with $F$ as in Theorem \ref{thm-Main-Theorem-in-Nguyen-paper}, the residue fields of $P$-adic completions $\bK_P$ of $\bK$ are algebraic finite extensions of $F$ that are also quasi-finite (see Ax \cite{ax-1968}). But in contrast with the case of global fields, the residue fields in our case are \textit{uncountable}, and thus a similar explicit formula for local symbols over $\bK_P$ was not known. In \cite[Proposition 8, p.210]{serre-local-fields}, Serre establishes a formula for such local symbols in terms of a topological generator of the absolute Galois group of the quasi-finite residue field which is isomorphic to the profinite completion $\widehat{\bZ}$ of $\bZ$. But Serre's formula in \cite[Proposition 8, p.210]{serre-local-fields} is not \textit{explicit enough} to compute the values of local symbols which in turn verifies ramification of the quaternion algebras associated to them. In Nguyen \cite{nguyen-ultrafinite-fields-2023}, the present author establishes an explicit higher reciprocity law for $\bK = F(t)$ which use the \textit{internal cardinalities} (see Subsection \ref{subsec-internal-sets-and-cardinality} for its notion) that are hypernatural numbers, as a way of \textit{counting} the number of elements of the residue fields of $P$-adic completions of $\bK$. One of the main contributions of this paper is to use the explicit higher reciprocity law in Nguyen \cite{nguyen-ultrafinite-fields-2023} to establish an explicit formula for local symbols over $P$-adic completions $\bK_P$ of $\bK$ which, in analogy with the formula given by Serre \cite[Corrollary to Lemma XIV 3.2]{serre-local-fields} for global fields, also depends on the $P$-adic valuations, but the internal cardinality replaces the cardinality in the formula. Combining the explicit formula for local symbols given in this paper with the higher reciprocity law in \cite{nguyen-ultrafinite-fields-2023}, we prove an analogue of the Hilbert reciprocity law for the rational function field $\bK = F(t)$. We then use both the explicit formula for local symbols and the analogue of the Hilbert reciprocity law for $\bK$ to study ramification of certain quaternion algebras over $\bK$ that are used to provide a universal-existential formula for the polynomial ring over an ultraproduct of finite fields of unbounded cardinalities in its fraction field.

The structure of this paper is as follows. In Section \ref{sec-basic-notions}, we recall some basic notions in model theory such as ultraproducts, hypernatural numbers and hyperintegers, internal cardinality, and the action of hyperintegers on ultraproducts of fields that will be used throughout the paper. In Section \ref{sec-local-symbols}, we recall the notion of local symbols given in Serre \cite{serre-local-fields}, and prove an explicit formula for local symbols over $P$-adic completions of the rational function field $\bK = F(t)$ over an ultraproduct of finite fields of distinct characteristics (see Theorem \ref{thm-Nguyen-formula-for-local-symbols-K_P}). We also prove an analogue of the Hilbert reciprocity law for $\bK$ in this section (see Theorem \ref{thm-analogue-of-the-Hilbert-reciprocity-law}). Finally in Section \ref{sec-definition-of-A-in-K}, we prove Theorem \ref{thm-Main-Theorem-in-Nguyen-paper} (see Corollary \ref{cor-main-corollary1}) that provides a first-order definition of the polynomial ring $\bA = F[t]$ over an ultraproduct of finite fields of unbounded cardinalities in its fraction field $\bK = F(t)$ by a universal-existential formula. Note that in Corollary \ref{cor-main-corollary1}, we also provide a number of universal and existential quantifiers needed for the definition of $\bA = F[t]$ in $\bK = F(t)$, but do not attempt to reduce the number of quantifiers to the smallest possible one.

\section{Basic notions and notation}
\label{sec-basic-notions}

In this section, we recall some basic notions and fix some notation that we will use throughout the paper. The main references for this section are Bell--Slomson \cite{bell-slomson}, Rothmaler \cite{rothmaler}, and Schoutens \cite{schoutens}.

For any subring $A$ of a commutative ring $R$, we will always denote $A^{\times}$ the set of units in $A$, i.e., the set of elements $a \in A$ such that there exists an element $b \in A$ for which $ab = 1$.

For a given field $k$, we always denote by $k^{\sep}$ a separable closure of $k$. We use the notation $k^{\alg}$ for an algebraic closure of $k$.

\subsection{Ultraproducts and \L{}o\'s' theorem}

Throughout this paper, we fix an infinite set $S$. A collection $\cD$ of infinite subsets of $S$ is called a \textit{nonprincipal ultrafilter on $S$} if $\cD$ is closed under finite intersection and for any subset $A \subseteq S$, either $A$ or its complement $S \setminus A$ belong in $\cD$. Since every set in $\cD$ must be infinite, we deduce that any cofinite subset of $S$ belongs in $\cD$. Throughout this paper, we always fix $\cD$ to be a nonprincipal ultrafilter on $S$.

For each $s \in S$, let $A_s$ be a set. Let $\widehat{A} = \prod_{s \in S}A_s$ be the Cartesian product of the $A_s$. We define an equivalence relation on $A$ as follows: for elements $\widehat{a} = (a_s)_{s\in S}$, $\widehat{b} = (b_s)_{s \in S}$ in $\widehat{A}$, $\hat{a}$ is equivalent to $\widehat{b}$ if the set $\{s \in S \; : \; a_s = b_s\}$ belongs in $\cD$. Following Schoutens \cite{schoutens}, we will also denote the equivalence class of an element $(a_s)_{s \in S} \in \widehat{A}$ by $\ulim_{s \in S}a_s$.

The set of all equivalence classes on $\widehat{A}$ is called the \textit{ultraproduct of the $A_s$} with respect to the ultrafilter $\cD$, and is denoted by $\prod_{s \in S}A_s/\cD$. In this paper, only ultraproducts with respect to a nonprincipal ultrafilter will be considered, and so whenever we consider \textit{ultraproducts}, we mean \textit{nonprincipal ultraproducts}.

If all the sets $A_s$ are equal to the same set $A$, we call the ultraproduct of the $A_s$ the \textit{ultrapower of $A$}, and use $A^{\#}$ to denote the ultrapower. There is a canonical map from $A$ to $A^{\#}$ that sends each element $a \in A$ to the equivalence class $\ulim_{s \in S}a \in A^{\#}$ in which all components are equal to $a$. We identify $A$ with its image in $A^{\#}$ under the canonical map, and consider $A$ as a subset of $A^{\#}$.

Throughout this paper, whenever we say a certain property (P) holds for $\cD$-almost all $s \in S$, we mean the set $\{s \in S\; : \; \text{(P) holds in $A_s$}\}$ belongs in the ultrafilter $\cD$.

If all the sets $A_s$ have the same algebraic structure such as groups, rings, or fields, their ultraproduct $\bA = \prod_{s \in S}A_s/\cD$ inherits the same algebraic structure from its components as follows. Let $\star_s : A_s \times A_s \to A_s$ be a binary operation for $\cD$-almost all $s \in S$. We always write $a_s \star_s b_s$ instead of $\star_s(a_s, b_s)$. The map $\star : \bA \times \bA \to \bA$ defined by
\begin{align}
\label{e1-operation-ultraproduct}
(\ulim_{s \in S}a_s)\star (\ulim_{s \in S}b_s) = \ulim_{s \in S}(a_s\star_s b_s) \in \bA
\end{align}
is a binary operation on $\bA$. Note that the operation $\star$ does not depend on the choices of $a_s$ and $b_s$ for the equivalence classes $\ulim_{s \in S}a_s$, $\ulim_{s \in S}b_s$. One can verify that if $A_s$ are groups with multiplication $\star_s$ and identity element $1_s$, then $\bA$ is a group with multiplication $\star$ and identity element $1 = \ulim_{s \in S}1_s$. Furthermore if $A_s$ is abelian for $\cD$-almost all $s \in S$, then $\bA$ is also abelian.

We obtain the following.
\begin{proposition}
\label{prop-notion-section-algebraic-structures-of-ultraproducts}

We maintain the same notation as above.

\begin{itemize}

\item [(i)] If $A_s$ is a (commutative) ring (resp. field) with respect to addition $+_s$ and multiplication $\cdot_s$ for $\cD$-almost all $s \in S$, then $\bA$ is a (commutative) ring (resp. field) with addition $+$ and multiplication $\cdot$ that are defined by (\ref{e1-operation-ultraproduct}).

\item [(ii)] If $A_s$ is a ring and $I_s$ is an ideal in $A_s$ for $\cD$-almost all $s \in S$, then $\prod_{s \in S}I_s/\cD$ is an ideal in $\bA$. Furthermore $\prod_{s \in S}(A_s/I_s)/\cD$ is isomorphic to $\bA/\left(\prod_{s \in S}I_s/\cD\right)$. The ideal $\prod_{s\in S}I_s/\cD$ is generated by $h$ elements if $A_s$ is generated by $h$ elements for $\cD$-almost all $s\in S$.

\end{itemize}

\end{proposition}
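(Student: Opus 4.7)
The plan is to reduce everything to a direct application of {\L}o\'s's theorem, since every property involved --- being a commutative ring, being a field, being an ideal, being generated by $h$ elements --- can be expressed as a first-order statement (or a quantifier-free statement with fixed parameters) in the appropriate language. Since the paper has already fixed the notation $\ulim_{s \in S} a_s$ for equivalence classes, and since the well-definedness of the binary operation $\star$ defined by (\ref{e1-operation-ultraproduct}) was already justified before the statement, the remaining work is essentially bookkeeping: take any ring axiom, observe that it is a universal identity, note that it holds in $A_s$ for all $s$ in some set $T \in \cD$, and conclude the same identity in $\bA$ on representatives lying in $T$.

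For part (i), I would first take $T = \{s \in S : A_s \text{ is a commutative ring}\} \in \cD$ and, working with representatives indexed by $T$, verify associativity, commutativity, and distributivity of the operations $+$ and $\cdot$ defined by (\ref{e1-operation-ultraproduct}) on $\bA$. The additive identity is $\ulim_{s \in S} 0_s$, the multiplicative identity is $\ulim_{s \in S} 1_s$, and the additive inverse of $\ulim_{s \in S} a_s$ is $\ulim_{s \in S}(-a_s)$. For the field case, given $\ulim_{s \in S} a_s \neq 0$ in $\bA$, the set $U = \{s \in T : a_s \neq 0_s\}$ lies in $\cD$ (because its complement contains a set not in $\cD$), so on $U$ we may define $b_s = a_s^{-1}$ in $A_s$, and on $S \setminus U$ define $b_s = 0_s$; the resulting class $\ulim_{s \in S} b_s$ is the required multiplicative inverse.

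For part (ii), let $\I = \prod_{s \in S} I_s / \cD$. I would first check that $\I$ is an ideal: closure under addition and under multiplication by elements of $\bA$ follows coordinatewise on the set $T' = \{s : I_s \text{ is an ideal in } A_s\} \in \cD$, together with the fact that $T' \cap T$ still belongs to $\cD$ since $\cD$ is closed under finite intersection. For the isomorphism, I would define
\[
\Phi : \prod_{s \in S}(A_s/I_s)/\cD \longrightarrow \bA / \I, \qquad \Phi\!\left(\ulim_{s \in S}(a_s + I_s)\right) = \ulim_{s \in S} a_s + \I,
\]
check that $\Phi$ is well-defined, a ring homomorphism, and both injective and surjective, each of which reduces to the statement that a certain set (of indices on which two representatives agree modulo $I_s$, or on which some relation holds) belongs to $\cD$ if and only if the corresponding set in $\bA/\I$ has the analogous property. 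Finally, for the finite-generation claim, if $I_s = (g_{1,s}, \ldots, g_{h,s})$ on a set in $\cD$, then writing $G_j = \ulim_{s \in S} g_{j,s}$, an element $\ulim_{s \in S} a_s \in \I$ admits, for $\cD$-almost all $s$, an expression $a_s = \sum_{j=1}^{h} r_{j,s} g_{j,s}$ with $r_{j,s} \in A_s$; taking ultralimits gives $\ulim_{s \in S} a_s = \sum_{j=1}^{h} (\ulim_{s \in S} r_{j,s}) G_j$.

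There is no real obstacle here: this is the standard ultraproduct transfer argument, and the only points to watch are the uses of $\cD$-closure under finite intersection (when combining several defining sets) and the fact that $\cD$ is an ultrafilter (when producing inverses in the field case and when verifying injectivity of $\Phi$). If one prefers, all three parts can be derived in one line from {\L}o\'s's theorem once the ring, field, and ideal axioms are written out as first-order sentences in the appropriate signature, but the direct verification above is equally short and is self-contained within the notation fixed earlier in the section.
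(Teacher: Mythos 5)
Your proposal is correct and is essentially the standard direct verification; the paper itself gives no argument here but simply cites Schoutens \cite[2.1.4]{schoutens}, so your write-up supplies a self-contained proof along exactly the lines that any textbook treatment (or a one-line invocation of \L{}o\'s) would take. One small slip in the justification: in the field case you write that $U = \{s \in T : a_s \neq 0_s\}$ lies in $\cD$ ``because its complement contains a set not in $\cD$,'' but containing a non-$\cD$ set is not the right reason. The correct argument is that $S \setminus U = (S \setminus T) \cup \{s \in T : a_s = 0_s\}$ is a union of two sets not in $\cD$ (the first because $T \in \cD$, the second because $\ulim_{s\in S} a_s \neq 0$ forces $\{s : a_s = 0_s\} \notin \cD$ and subsets of non-$\cD$ sets are non-$\cD$), and for an ultrafilter a finite union of sets not in $\cD$ is again not in $\cD$; hence $U \in \cD$. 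With that repair the argument is complete.
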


\begin{proof}

See Schoutens \cite[2.1.4, p.10]{schoutens} for the proof.

\end{proof}

We state one of the most fundamental transfer principle for ultraproducts.

\begin{theorem}
(\L{}o\'s' theorem)
\label{thm-Los}

Let $\cR$ be a ring, let $A_s$ be an $\cR$-algebra for each $s \in S$, and let $\bA$ be their ultraproduct. Let $\psi(\zeta_1, \ldots, \zeta_m)$ be a formula with parameters from $\cR$ whose free variables are among $\zeta_1, \ldots, \zeta_m$. For each $s \in S$, let $\widehat{a}_s$ be an $m$-th tuple in $A_s$, and let $\widehat{a}$ be their ultraproduct which is an $m$-th tuple in $\bA$. Then $\psi(\widehat{a}_s)$ holds in $A_s$ for $\cD$-almost all $s \in S$ if and only if $\psi(\widehat{a})$ holds in $\bA$.

\end{theorem}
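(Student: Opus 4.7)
The plan is to proceed by structural induction on the complexity of the formula $\psi$, with the three defining features of the ultrafilter $\cD$ each feeding into a separate step of the induction. First I would dispose of atomic formulas. An $\cR$-algebra term $t(\zeta_1, \ldots, \zeta_m)$ is built inductively from the variables, constants in $\cR$, and the ring operations, so by a short induction on the construction of $t$ using the coordinatewise definitions recorded in~(\ref{e1-operation-ultraproduct}), one obtains $t(\widehat{a}) = \ulim_{s \in S} t(\widehat{a}_s)$ in $\bA$. An atomic formula is an equation $t_1 = t_2$, and the defining equivalence relation on the ultraproduct gives $t_1(\widehat{a}) = t_2(\widehat{a})$ in $\bA$ exactly when $\{s \in S : t_1(\widehat{a}_s) = t_2(\widehat{a}_s)\}$ lies in $\cD$, establishing the base case.

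Second, I would handle the Boolean connectives, each step corresponding to one defining property of $\cD$. For conjunction, the set on which $\psi_1 \wedge \psi_2$ holds is the intersection of the sets on which each conjunct holds, so closure of $\cD$ under finite intersection yields the induction step. For negation, $\neg\psi(\widehat{a})$ holds in $\bA$ precisely when $\{s : \psi(\widehat{a}_s)\text{ holds}\}$ fails to lie in $\cD$; since $\cD$ is an ultrafilter, this is equivalent to its complement $\{s : \neg\psi(\widehat{a}_s)\text{ holds}\}$ lying in $\cD$. Disjunction then reduces via De Morgan to the two previous cases.

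The main obstacle is the quantifier step, which I expect to be the only genuinely subtle part. Consider $\psi = \exists \zeta\, \varphi(\zeta, \zeta_1, \ldots, \zeta_m)$. The forward direction is painless: any witness $\widehat{b} = \ulim_s b_s \in \bA$ to $\varphi(\widehat{b}, \widehat{a})$ in $\bA$ yields, through the induction hypothesis on $\varphi$, a set in $\cD$ of indices $s$ for which $b_s$ witnesses $\exists \zeta\, \varphi(\zeta, \widehat{a}_s)$ in $A_s$. The converse is where I would have to be careful: given $T := \{s : \exists \zeta \in A_s\, \varphi(\zeta, \widehat{a}_s)\} \in \cD$, I need to assemble a single element of $\bA$ from coordinatewise witnesses. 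This requires picking a $b_s \in A_s$ satisfying $\varphi(b_s, \widehat{a}_s)$ for every $s \in T$ — an appeal to the axiom of choice — and extending $b_s$ arbitrarily on $S \setminus T$; the resulting $\widehat{b} := \ulim_s b_s$ then witnesses the existential in $\bA$ by the induction hypothesis, since the set on which $\varphi(b_s, \widehat{a}_s)$ holds contains $T$ and hence lies in $\cD$. The universal quantifier is finally reduced to the existential case by rewriting $\forall \zeta$ as $\neg \exists \zeta\, \neg$, so no new idea is needed beyond what was already established.
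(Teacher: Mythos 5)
The paper does not give its own proof of Łoś' theorem; it simply cites Bell–Slomson and Rothmaler, where the argument is exactly the structural induction you describe. Your proposal is correct and matches that standard treatment in all essentials, including the coordinatewise handling of terms and atomic formulas, the use of closure under finite intersection (together with the ultrafilter dichotomy, which also yields the upward closure you tacitly invoke for conjunction) for the Boolean steps, and the appeal to the axiom of choice to assemble a witness in the existential-quantifier step.
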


\begin{proof}

For the proof of \L{}o\'s' theorem, see \cite{bell-slomson} or \cite{rothmaler}.

\end{proof}

\subsection{Hyperintegers and hypernaturals}

The ring of integers is denoted by $\bZ$. The set of natural numbers, i.e. positive integers $1, 2, \ldots$ is denoted by $\bN$.

Throughout this paper, we let $\bZ^{\#}$ be the ultrapower $\prod_{s \in S}\bZ/\cD$ whose elements are called \textbf{hyperintegers}. We also let $\bN^{\#}$ be the ultrapower $\prod_{s \in S}\bN/\cD$ whose elements are called \textbf{hypernatural numbers} or simply \textbf{hypernaturals}. Since $\bN \subset \bZ$, $\bN^{\#} \subset \bZ^{\#}$.

\subsubsection{} There is a total order on $\bZ^{\#}$. For any hyperintegers $a = \ulim_{s \in S}a_s$, $b = \ulim_{s \in S}b_s$, we define $a \le b$ if and only if $a_s \le b_s$ for $\cD$-almost all $s \in S$, i.e., $\{s \in S \; : \; a_s \le b_s\} \in \cD$. We further write $a < b$ if $a \le b$ and $a \ne b$.

A hyperinteger $a = \ulim_{s \in S}a_s$ with integers $a_s$ is \textbf{positive} if $a > 0$, i.e., $a_s > 0$ for $\cD$-almost all $s \in S$. A positive hyperinteger is also a hypernatural number. A hyperinteger $a$ is \textbf{negative} if $a < 0$. We further say that $a$ is \textbf{nonnegative} if $a \ge 0$.

The set of positive (resp., negative and nonnegative) hyperintegers is denoted by $\bZ^{\#}_{>0}$ (resp. $\bZ^{\#}_{< 0}$, and $\bZ^{\#}_{\ge 0}$).

\subsection{Internal sets and cardinality}
\label{subsec-internal-sets-and-cardinality}

In this subsection, we recall the notion of internal cardinality (see, for example, Goldblatt \cite{goldblatt}) that plays a key role in this paper.

\begin{definition}
\label{def-internal-cardinality}

Let $\alpha = \ulim_{s \in S}\alpha_s$ be a hypernatural in $\bN^{\#}$, where the $\alpha_s$ are natural numbers.

A set $A$ is said to have \textbf{internal cardinality $\alpha$} if there exists a collection of finite sets $A_s$ of cardinality $\alpha_s$ for $\cD$-almost all $s \in S$ such that $A = \prod_{s\in S} A_s/\cD$, i.e., $A$ is the ultraproduct of the sets $A_s$ with respect to $\cD$, and for $\cD$-almost all $s \in S$, $A_s$ is a finite set of $\alpha_s$ elements. In this case, we also call $A$ an \textbf{internal set}.

In symbol, we write $\icard(A)$ for the internal cardinality of $A$.

\end{definition}

\subsection{The action of hyperintegers on ultraproducts}
\label{subsec-action-of-hyperintegers-on-ultraproducts}

Let $A = \prod_{s \in S}A_s/\cD$ be a (nonprincipal) ultraproduct of rings $A_s$, and let $n = \ulim_{s \in S}n_s$ be a nonnegative integer in $\bZ^{\#}_{\ge0}$ for some nonnegative integers $n_s$. The \textbf{ultrapower map on $A$ with exponent $n$} is defined as follows. For an element $a = \ulim_{s \in S}a_s \in A$ for some $a_s \in A_s$, define
\begin{align}
\label{e1-ultra-power-with-exponent-in-hypernaturals}
a^n := \ulim_{s \in S}a_s^{n_s} \in A.
\end{align}
It is easy to check that the above definition does not depend on the choice of $a_s$ for the equivalence class $a \in A$. By \L{}o\'s' Theorem \ref{thm-Los}, we can show that the ultrapower map satisfies the same properties as the standard power map as follows: for any elements $a = \ulim_{s \in S}a_s, b = \ulim_{s \in S}b_s \in A$ and any nonnegative integers $m = \ulim_{s \in S}m_s, n = \ulim_{s \in S}n_s$,
\begin{itemize}

\item [(i)] $(ab)^n = a^n \cdot b^n$;

\item [(ii)] $a^n\cdot a^m = a^{m + n}$; and

\item [(iii)] $(a^n)^m = a^{mn}$.

\end{itemize}

Similarly we can define the \textbf{ultrapower map on $A^{\times} = \prod_{s \in S}A_s^{\times}/\cD$ with exponent as a hyperinteger $\alpha = \ulim_{s\in S}n_s \in \bZ^{\#}$} in the same way as (\ref{e1-ultra-power-with-exponent-in-hypernaturals}) above. That is, for a hyperinteger $\alpha = \ulim_{s \in S}n_s \in \bZ^{\#}$ for some integers $n_s \in \bZ$ and an arbitrary element $a = \ulim_{s \in S}a_s \in A^{\times}$ for some elements $a_s \in A_s^{\times}$, define
\begin{align}
\label{e2-ultrapower-with-hyperinteger-exponent}
a^{\alpha} := \ulim_{s \in S}a_s^{n_s}.
\end{align}
Since $a \in A^{\times}$, $a_s \ne 0$ for $\cD$-almost all $s \in S$, and thus the above definition is well-defined. Thus we obtain the action of $\bZ^{\#}$ on the multiplicative group $A^{\times}$ of the form
\begin{align*}
\bZ^{\#} \times A^{\times} \to A^{\times}
\end{align*}
that sends each pair $(\alpha, a)$ to $a^{\alpha}$ defined as in (\ref{e2-ultrapower-with-hyperinteger-exponent}).

\section{Cup Products and Local Symbols}
\label{sec-local-symbols}

In this section, we recall the notion of local symbols over a field $K$ that is complete under a discrete valuation such that its residue field is quasi-finite. We then establish an explicit formula for local symbols for $P$-adic completions of the rational function field $\bK$ over an ultra-finite field (see Subsection \ref{subsec-local-symbols-when-k-is-an-ultra-finite-field} for its notion), and prove an analogue of the Hilbert reciprocity law for $\bK$. We also recall the higher reciprocity law in Nguyen \cite{nguyen-ultrafinite-fields-2023} that we will need in the proof of our main theorem.

\subsection{Local Symbols $(\chi, \beta)$ }
\label{subsec-local-symbols-chi-beta}

Let $K$ be a field, and let $K^{\sep}$ be a separable closure of $K$. Let $G_K= \Gal(K^{\sep}/K)$ denote the Galois group of $K^{\sep}$ over $K$.

Throughout this section, we denote by $H^n(G_K, \Gamma)$ the $n$-th cohomology group of $G_K$ with coefficients in $\Gamma$,  where $\Gamma$ is a $G_K$-module.

The exact sequence of $G_K$-modules
\begin{align*}
0 \rightarrow \bZ \rightarrow \bQ \rightarrow \bQ/\bZ \rightarrow 0,
\end{align*}
induces the exact sequence
\begin{align*}
1 \rightarrow H^0(G_K, \bZ) \rightarrow H^0(G_K, \bQ) \rightarrow H^0(G_K, \bQ / \bZ)\rightarrow  \\
H^1(G_K, \bZ) \rightarrow H^1(G_K, \bQ) \rightarrow H^1(G_K, \bQ / \bZ) \xrightarrow{\delta} H^2(G_K, \bZ),
\end{align*}
where $\delta$ is the $1$st connecting map.

Recall that an element $\chi$ in $H^1(G_K, \bQ / \bZ)$ is called a character of $G_K$. For such a character $\chi \in H^1(G_K, \bQ / \bZ)$, $\delta(\chi)$ is an element of $H^2(G_K, \bZ)$.

There is a cup product
\begin{align*}
\cup : H^0(G_K, {K^{\sep}}^\times) \times H^2(G_K, \bZ) \to H^2(G_K, {K^{\sep}}^\times),
\end{align*}
(see \cite{serre-local-fields}). Thus, for each $\beta \in K^\times = H^0(G_K, {K^{\sep}}^\times)$, the cup product $[\beta] \cup [\delta(\chi)]$ is an element of the Brauer group $\text{Br}(K)= H^2(G_K, {K^{\sep}}^\times)$.

We denote the element $[\beta] \cup [\delta(\chi)]$ by the symbol $(\chi, \beta)$. So the symbol $(\cdot, \cdot)$ is a map from $H^1(G_K, \bQ / \bZ) \times K^\times$ to $\text{Br}(K)= H^2(G_K, {K^{\sep}}^\times)$.

\subsection{The symbol $(\alpha, \beta)$}
\label{subsec-symbol-alpha-beta}

Let $n$ be a positive integer such that $K$ is of characteristic $0$ or $n$ is a prime to the characteristic of $K$. We further assume that $K$ contains the group $\mu_n$ of $n$-th roots of unity.

Take an arbitrary $\alpha \in K^\times$, and choose a root $\lambda \in K^{\sep}$ of the equation $\lambda^n = \alpha$, i.e., $\lambda$ is an $n$th root of $\alpha$ in $K^{\sep}$.

Fix a primitive $n$-th root of unity $\zeta_n \in \mu_n$. By identifying $\zeta_n$ with $1+n\bZ \in \bZ/ n\bZ$, one can identify $\mu_n$ with $ \bZ/ n\bZ$, and
\begin{align*}
\psi_\alpha(\sigma) = \frac{\sigma(\lambda)}{\lambda} \in \mu_n \cong \bZ/ n\bZ, \; \sigma \in G_K,
\end{align*}
 defines a homomorphism from $G_K$ to $\bZ/ n\bZ$.

For each $\sigma \in G_K$, we then set
\begin{align*}
\chi_\alpha(\sigma) =\frac{1}{n}\psi_\alpha(\sigma) \in  \left( \frac{1}{n} \right) \bZ/ \bZ,
\end{align*}
and then $\chi_\alpha$ is a homomorphism of $G_K$ into $\bQ / \bZ$, i.e., $\chi_\alpha \in H^1(G_K, \bQ/\bZ)$--a character of $G_K$. One can  verify that the map $\alpha \mapsto \chi_\alpha$ defines an isomorphism of $K^\times / K^{\times^n}$ onto the group of those characters of $G_K$ having order dividing $n$.

For $\alpha, \beta \in K^\times$, we set
\begin{align}\label{eq1}
(\alpha, \beta) := (\chi_\alpha, \beta) \in \text{Br}(K),
\end{align}
where $ (\chi_\alpha, \beta)$ is defined in Subsection \ref{subsec-local-symbols-chi-beta}.

\subsection{Local Symbols $(\alpha, \beta)_\upsilon$}
\label{subsec-local-symbols}

We recall from Serre \cite{serre-local-fields} how to define local symbols $(\alpha, \beta)_\upsilon$ for $\alpha, \beta \in K^\times$, where $K$ is a field that is complete under a discrete valuation $\upsilon$ with quasi-finite residue field $k$ such that $K$ contains the $n$th roots of unity. Note that $K$ contains the $n$th roots of unity if and only if $k$ contains the $n$th roots of unity (see Serre \cite[Lemma 1, p. 210]{serre-local-fields}). We further assume that $k$ has characteristic $0$ or $n$ is an integer prime to the characteristic of $k$.

Recall that a field $k$ is a quasi-finite field if the following two conditions hold:
\begin{itemize}

\item[(i)] $k$ is perfect;

\item[(ii)] there is an isomorphism of topological groups $\widehat{\bZ} \to G_k = \Gal (k^\sep / F)$ that is given by $\epsilon \in \widehat{\bZ} \mapsto \Xi^\epsilon \in G_k$ for some element $\Xi \in G_k$, where $\widehat{\bZ}=\varprojlim \bZ/ n\bZ$ is the profinite completion of $\bZ$, and $k^\sep$ is a separable closure of $k$.

\end{itemize}

Recall from Subsection \ref{subsec-symbol-alpha-beta} that for $\alpha, \beta \in K^\times$, one can define the symbol $(\alpha, \beta) = (\chi_\alpha, \beta)$ that belongs in $\text{Br}(K)$.

Fix a primitive $n$-th root of unity $\zeta_n$ that has the effect of identifying $\mu_n$ with $\bZ/ n\bZ$. We are now ready to define the local symbols that we will need in this paper.

\begin{definition} (Local Symbols)
\label{def-local-symbols}

For $\alpha, \beta \in K^\times$, set
\begin{align*}
(\alpha, \beta)_\upsilon := \zeta_n^{n \cdot \inv_K(\alpha, \beta)}
\end{align*}
where $(\alpha, \beta) \in \text{Br}(K)$ is the symbol in Subsection \ref{subsec-local-symbols}, and $n \cdot \inv_K(\alpha, \beta)$ is an element of $\bZ/ n\bZ$ since $\inv_K(\alpha, \beta)$ is an element of the subgroup $(1/n)\bZ/\bZ$ of $\bQ/\bZ$.

\end{definition}

\begin{remark}
\hfill
\begin{itemize}

\item[(i)] Local symbols $(\alpha, \beta)_\upsilon$ is an $n$-th root of unity.

\item[(ii)] By \cite[Corollary, p.208]{serre-local-fields}, the local symbols $(\alpha, \beta)_\upsilon$ do not depend on the choice of the root of unity $\zeta_n$.

\end{itemize}
\end{remark}

 We recall some basic properties of local symbols $(\alpha, \beta)_\upsilon$ from Serre \cite[Proposition 7, p.208]{serre-local-fields}.

\begin{proposition}
\label{prop-basic-properties-local-symbols}

\begin{itemize}

\item []

\item [(i)] $(\alpha_1\alpha_2, \beta)_\upsilon = (\alpha_1, \beta)_\upsilon(\alpha_2, \beta)_\upsilon$.

\item [(i)] $(\alpha, \beta_1\beta_2)_\upsilon = (\alpha, \beta_1)_\upsilon(\alpha, \beta_2)_\upsilon$.

\item [(iii)] $(\alpha, \beta)_\upsilon = 1$ if and only if $\beta$ is a norm in the extension $K(\alpha^{1/n})/K$.

\item [(iv)] $(\alpha, -\alpha)_\upsilon = (\alpha, 1 - \alpha)_{\upsilon} = 1$.

\item [(v)] $(\alpha, \beta)_{\upsilon} (\beta, \alpha)_\upsilon = 1$.

\item [(vi)] If $(\alpha, \beta)_{\upsilon} = 1$ for all $\beta \in K^{\times}$, then $\alpha \in K^{\times n}$.

\end{itemize}

\end{proposition}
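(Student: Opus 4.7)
The plan is to follow the general strategy of Serre \cite[Chapter XIV]{serre-local-fields}, deducing each item either directly from the cup-product definition or by reduction to a previous one. The bilinearity statements (i) and (ii) are formal: Kummer theory identifies $\alpha \mapsto \chi_\alpha$ with a group homomorphism $K^\times/K^{\times n} \to H^1(G_K,\bQ/\bZ)$ into the characters of order dividing $n$, so $\chi_{\alpha_1\alpha_2} = \chi_{\alpha_1} + \chi_{\alpha_2}$; the connecting map $\delta$ and the cup product into $\Br(K) = H^2(G_K,{K^\sep}^\times)$ are both $\bZ$-bilinear; and $\inv_K$ is a group homomorphism into $\bQ/\bZ$, so exponents add when passing to $\zeta_n^{n\cdot\inv_K(\cdot,\cdot)}$. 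This yields (i) and (ii).

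For (iii), I would invoke the classical identification of $(\chi_\alpha,\beta) \in \Br(K)$ with the class of the cyclic algebra determined by the cyclic extension $L = K(\alpha^{1/n})/K$, the distinguished generator of $\Gal(L/K)$ attached to $\chi_\alpha$, and the scalar $\beta$. A cyclic algebra class is trivial in $\Br(K)$ precisely when its scalar lies in $N_{L/K}(L^\times)$, so $\inv_K(\chi_\alpha,\beta) = 0$ iff $\beta$ is a norm from $L$. Combined with the definition $(\alpha,\beta)_\upsilon = \zeta_n^{n\cdot\inv_K(\chi_\alpha,\beta)}$, this gives (iii).

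The Steinberg relations in (iv) are then made explicit via norms from $L = K(\lambda)$, where $\lambda^n = \alpha$. From the factorization
\begin{equation*}
X^n - \alpha = \prod_{i=0}^{n-1}(X - \zeta_n^i \lambda),
\end{equation*}
setting $X = 1$ gives $1-\alpha = N_{L/K}(1-\lambda)$, so $1-\alpha$ is a norm from $L$ and $(\alpha,1-\alpha)_\upsilon = 1$ by (iii); setting $X = 0$ and tracking signs shows $-\alpha$ equals $N_{L/K}(-\lambda)$ times a power of $\zeta_n$ that itself lies in $K^{\times n}$, so $-\alpha$ is a norm from $L$ and $(\alpha,-\alpha)_\upsilon = 1$. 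Property (v) is then formal: from (iv) applied to $\alpha\beta$ one has $(\alpha\beta,-\alpha\beta)_\upsilon = 1$; expanding both slots via (i)-(ii), cancelling the identities $(\alpha,-\alpha)_\upsilon = (\beta,-\beta)_\upsilon = 1$, and regrouping leaves $(\alpha,\beta)_\upsilon (\beta,\alpha)_\upsilon = 1$.

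Finally, (vi) is a non-degeneracy assertion. If $(\alpha,\beta)_\upsilon = 1$ for every $\beta \in K^\times$, then $\inv_K(\chi_\alpha,\beta) = 0$ in $(1/n)\bZ/\bZ$ for all $\beta$, i.e., the homomorphism $\beta \mapsto (\chi_\alpha,\beta)$ on $K^\times$ is identically zero. Local duality for a field complete under a discrete valuation with quasi-finite residue field (Serre \cite[Chapter XIV]{serre-local-fields}) asserts that the pairing $H^1(G_K,\bQ/\bZ) \times K^\times \to \bQ/\bZ$ given by $(\chi,\beta) \mapsto \inv_K(\chi,\beta)$ is non-degenerate on characters of finite order, so this forces $\chi_\alpha = 0$; by Kummer theory this gives $\alpha \in K^{\times n}$. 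The main obstacle is step (iii): the identification of the cup-product symbol with a cyclic algebra class, together with the norm-group description of its Brauer class, both of which require a careful use of local class field theory over a quasi-finite (here uncountable) residue field. Once these ingredients are granted, all remaining items reduce either to explicit norm computations in $K(\alpha^{1/n})$ or to formal manipulations of bilinear pairings.
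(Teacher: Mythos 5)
The paper does not prove this proposition; it is stated as a recollection from Serre \cite[Proposition 7, p.208]{serre-local-fields}, with no argument supplied. Your reconstruction follows exactly the standard development in Serre's Chapter XIV (bilinearity from Kummer theory and the cup product, identification with cyclic algebras and norm groups for (iii), the Steinberg norm computation for (iv), formal expansion for (v), and non-degeneracy of the invariant pairing for (vi)), and it is correct, so it is essentially the same approach as the one the paper is citing.
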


Let $\cF$ be a topological generator of $G_k = \Gal(k^{\sep}/k)$. Given $x \in k^{\times}$, let $y \in k^{\sep^\times}$ be a solution to the equation $y^n = x$. Then $z = \cF(y)/y$ is an element in $\mu_n$, and does not depend on the choice of solution $y$. If we set $\cP_n(x) = \cF(y)/y$, then the map $\cP_n$ defines by passage to the quotient an isomorphism of $k^{\times}/k^{\sep^{\times}}$ onto $\mu_n$.

We recall the formula for computing $(\alpha, \beta)_{\upsilon}$ in Serre \cite[Proposition 8, p.210]{serre-local-fields}.

\begin{theorem}
\label{thm-computation-of-local-symbols-serre}

Let $\alpha$ and $\beta$ be elements in $K^{\times}$. Set
\begin{align*}
\gamma = (-1)^{\upsilon(\alpha)\upsilon(\beta)} \dfrac{\alpha^{\upsilon(\beta)}}{\beta^{\upsilon(\alpha)}}.
\end{align*}
Then $\gamma$ is a root of unity in $K$. Furthermore, if $\overline{\gamma}$ denotes its image in $k^{\times}$, then
\begin{align*}
(\alpha, \beta)_{\upsilon} = \cP_n(\overline{\gamma}).
\end{align*}

\end{theorem}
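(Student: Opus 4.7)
The plan is to reduce the formula to three building-block cases using bilinearity and skew-symmetry, then verify each building block directly from the definition.

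First, I would observe that $\upsilon(\gamma) = \upsilon(\alpha)\upsilon(\beta) - \upsilon(\beta)\upsilon(\alpha) = 0$, so $\gamma$ lies in $\cO_\upsilon^\times$ and the residue $\overline{\gamma} \in k^\times$ is well-defined (I interpret ``root of unity'' in the theorem statement as ``unit,'' matching Serre's original formulation). Fixing a uniformizer $\pi$ of the discrete valuation ring $\cO_\upsilon$ and writing $\alpha = \pi^a u_\alpha$, $\beta = \pi^b u_\beta$ with $a = \upsilon(\alpha)$, $b = \upsilon(\beta)$, and $u_\alpha, u_\beta \in \cO_\upsilon^\times$, the bilinearity (Proposition \ref{prop-basic-properties-local-symbols}(i),(ii)) yields
\begin{equation*}
(\alpha, \beta)_\upsilon
  = (\pi, \pi)_\upsilon^{ab} \cdot (\pi, u_\beta)_\upsilon^{a} \cdot (u_\alpha, \pi)_\upsilon^{b} \cdot (u_\alpha, u_\beta)_\upsilon,
\end{equation*}
while direct substitution gives $\overline{\gamma} = (-1)^{ab}\,\overline{u_\alpha}^{\,b}\,\overline{u_\beta}^{\,-a}$.

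Next I would isolate the three building-block identities:
\textbf{(A)} $(u,v)_\upsilon = 1$ for any $u,v \in \cO_\upsilon^\times$;
\textbf{(B)} $(u, \pi)_\upsilon = \cP_n(\overline{u})$ for any $u \in \cO_\upsilon^\times$; and
\textbf{(C)} $(\pi, \pi)_\upsilon = \cP_n(-1)$.
For (A), since $u$ is a unit and $k$ contains $\mu_n$, the extension $K(u^{1/n})/K$ is unramified, and in an unramified extension of a complete discretely valued field with quasi-finite residue field the norm map is surjective on units; hence $v$ is a norm from $K(u^{1/n})/K$ and Proposition \ref{prop-basic-properties-local-symbols}(iii) forces $(u,v)_\upsilon = 1$. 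For (B), I would use the explicit description of the norm-residue map on an unramified cyclic extension: $\Gal(K(u^{1/n})/K)$ is identified with $\Gal(k(\overline{u}^{1/n})/k)$, the reciprocity map sends the uniformizer $\pi$ to the Frobenius $\cF$, and $\cF$ acts on $\overline{u}^{1/n}$ by multiplication by $\cP_n(\overline{u})$ by definition of $\cP_n$; the symbol $(u, \pi)_\upsilon$ records exactly this Galois action via the cup product $\chi_u \cup \beta$. Finally, (C) is obtained from the Steinberg relation $(\pi, -\pi)_\upsilon = 1$ of Proposition \ref{prop-basic-properties-local-symbols}(iv): bilinearity gives $(\pi, \pi)_\upsilon = (\pi, -1)_\upsilon^{-1}$, and skew-symmetry together with (B) gives $(\pi, -1)_\upsilon = (-1, \pi)_\upsilon^{-1} = \cP_n(-1)^{-1}$.

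Substituting (A), (B), and the skew-symmetric form $(\pi, u)_\upsilon = \cP_n(\overline{u})^{-1}$ into the decomposition yields
\begin{equation*}
(\alpha, \beta)_\upsilon
  = \cP_n(-1)^{ab} \cdot \cP_n(\overline{u_\beta})^{-a} \cdot \cP_n(\overline{u_\alpha})^{b}
  = \cP_n\!\bigl((-1)^{ab}\,\overline{u_\alpha}^{\,b}\,\overline{u_\beta}^{\,-a}\bigr)
  = \cP_n(\overline{\gamma}),
\end{equation*}
as required. The main obstacle is identity (B), which is not a formal consequence of the bilinearity/skew-symmetry package but rather the genuine content of local class field theory for complete discretely valued fields with quasi-finite residue field: it encodes that the local reciprocity isomorphism sends the uniformizer class to the Frobenius generator $\Xi$ that defines $\cP_n$. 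Once (B) is granted, the cases (A) and (C) are comparatively formal, and the rest of the argument is bookkeeping.
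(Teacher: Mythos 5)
The paper does not actually prove this result; it simply cites Serre \cite[Proposition 8, p.210]{serre-local-fields} and reproduces the statement, so there is no in-paper argument to compare against. That said, your reconstruction is correct and is essentially the route Serre himself takes: reduce by bilinearity to the cases $(\pi,\pi)$, $(u,\pi)$, $(u,v)$ for a fixed uniformizer $\pi$ and units $u,v$, and verify each building block. Your bookkeeping at the end (that $\overline{\gamma} = (-1)^{ab}\,\overline{u_\alpha}^{\,b}\,\overline{u_\beta}^{\,-a}$ and that $\cP_n$ is a homomorphism on $k^\times/k^{\times n}$, so the three factors recombine as $\cP_n(\overline{\gamma})$) checks out.

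Two small remarks. First, you are right to read ``root of unity'' in the statement as ``unit'': in Serre's original the assertion is that $\gamma$ is a unit of $K$ (which follows from $\upsilon(\gamma)=0$ exactly as you say), and the paper's phrasing is a slip; your interpretation is what makes the sentence ``its image in $k^\times$'' meaningful. Second, in your discussion of (B) you write ``the cup product $\chi_u \cup \beta$'' where you mean $\chi_u \cup \pi$ (or $[\pi]\cup\delta(\chi_u)$ in the notation of Subsection \ref{subsec-local-symbols-chi-beta}); this is just a typo. The genuine mathematical content is, as you identify, item (B): it is the statement that for the unramified cyclic extension $K(u^{1/n})/K$ the reciprocity map sends the class of a uniformizer to the Frobenius, which is the specialization of Serre's invariant formula $\inv_K([\beta]\cup\delta\chi) = \chi(\cF)\,\upsilon(\beta)$ for unramified characters $\chi$; it cannot be deduced formally from the bilinearity/Steinberg package and must be imported from the class field theory of complete fields with quasi-finite residue field. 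Items (A) and (C) then follow as you describe, with (A) resting on surjectivity of the norm on units in an unramified extension (which holds here because the residue field is quasi-finite). The proof is sound.
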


\subsubsection{\textbf{A special case where $k$ is an ultra-finite field}}
\label{subsec-local-symbols-when-k-is-an-ultra-finite-field}

In this subsection, we keep $K$ as in the beginning of Subsection \ref{subsec-local-symbols}, but further assume that its residue field is an ultra-finite residue field $\GF(\kappa)$ such that $\GF(\kappa)$ contains the $n$th roots of unity (see below for the notation regarding ultra-finite fields and their definition.) We are often concerned with the case where $K$ is the $P$-adic completion $\bK_P$ of the rational function field $\GF(\kappa)(t)$, where $P$ is either a monic irreducible polynomial of positive degree in the polynomial ring $\GF(\kappa)[t]$, or is the infinite prime $\infty = 1/t$. We are most interested in local symbols $(\alpha, \beta)_\upsilon$ when $n = 2$, and so in which case, $(\alpha, \beta)_\upsilon$ takes only values $\pm 1$.

Let $S$ be an infinite set, and fix a nonprincipal ultrafilter $\cD$ on $S$ as in Secion \ref{sec-basic-notions}. Throughout this paper, for each $s \in S$, let $q_s$ be a power of a prime $p_s$. We assume that the sequence $\{q_s\}_{s \in S}$ is unbounded for $\cD$-almost all $s \in S$, and thus the finite fields $\bF_{q_s}$ are of unbounded cardinalities. Let $\kappa = \ulim_{s \in S}q_s \in \bN^{\#}$. Recall from Nguyen \cite{nguyen-ultrafinite-fields-2023} that the \textbf{ultra-finite field $\GF(\kappa)$} is the ultraproduct of finite fields $\bF_{q_s}$, that is, $\GF(\kappa) = \prod_{s \in S}\bF_{q_s}/\cD$. Note that $\kappa$ is the internal cardinality of $\GF(\kappa)$. Since the finite fields $\bF_{q_s}$ are perfect, it follows that $\GF(\kappa)$ is also a perfect field (see Nguyen \cite{nguyen-ultrafinite-fields-2023}).

We first describe a topological generator $\cF$ of the absoulte Galois group $G_{\GF(\kappa)} = \Gal(\GF(\kappa)^{\alg}/\GF(\kappa))$ from Nguyen \cite{nguyen-ultrafinite-fields-2023}. For each $s \in S$, let $\cF_s$ denote the Frobenius map that sends each $x$ to $x^{q_s}$. For each positive integer $d > 0$, we know from Nguyen \cite{nguyen-ultrafinite-fields-2023} that $\GF(\kappa)$ has a unique extension of degree $d$ over $\GF(\kappa)$ that is of the form $\GF(\kappa^d) = \prod_{s\in S}\bF_{q_s^d}/\cD$. we let $\cF$ be the \textit{ultra-Frobenius map} that sends each element $a = \ulim_{s \in S}a_s \in \GF(\kappa^d)$ with $a_s \in \bF_{q_s^d}$ to $a^\kappa = \ulim_{s \in S}a_s^{q_s} \in \GF(\kappa^d)$.

\begin{lemma}
\label{lem-ultra-frobenius-is-a-generator}

$\GF(\kappa)$ is a quasi-finite field and the ultra-Frobenius map $\cF$ is a topological generator of the absolute Galois group $G_{\GF(\kappa)} = \Gal(\GF(\kappa)^{\alg}/\GF(\kappa))$.

\end{lemma}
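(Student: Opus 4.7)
My plan is to reduce the statement to two purely structural facts about $\GF(\kappa)$, both of which should follow quickly from {\L}o\'s' theorem together with the results on ultra-finite fields cited from Nguyen \cite{nguyen-ultrafinite-fields-2023}: first, that $\GF(\kappa)$ is perfect, and second, that for every positive integer $d$ the field $\GF(\kappa)$ admits a \emph{unique} extension of degree $d$ inside $\GF(\kappa)^{\alg}$, namely $\GF(\kappa^d) = \prod_{s \in S} \bF_{q_s^d}/\cD$. Perfectness has been recorded already in the excerpt. The uniqueness of the degree-$d$ extension is transferred from the analogous statement for the finite fields $\bF_{q_s}$: the finiteness of the set of intermediate fields in $\bF_{q_s^{d!}}/\bF_{q_s}$ and the characterization of $\bF_{q_s^d}$ as the fixed field of $\cF_s^d$ are both first-order statements, so {\L}o\'s' theorem lifts them to $\GF(\kappa^d)/\GF(\kappa)$.

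Once these are in hand, I would assemble the absolute Galois group as the inverse limit
\[
G_{\GF(\kappa)} \;=\; \varprojlim_{d} \Gal\bigl(\GF(\kappa^d)/\GF(\kappa)\bigr),
\]
indexed by divisibility. The next step is to verify that the ultra-Frobenius $\cF$, viewed at each finite level, lies in $\Gal(\GF(\kappa^d)/\GF(\kappa))$ and generates this cyclic group of order $d$. Concretely, since $\cF$ acts on $a = \ulim_{s \in S} a_s \in \GF(\kappa^d)$ by $\cF(a) = \ulim_{s \in S} a_s^{q_s}$ and since each $\cF_s$ generates $\Gal(\bF_{q_s^d}/\bF_{q_s})$ (a first-order property in the language of rings augmented by the constants naming $\bF_{q_s}$), another application of {\L}o\'s' theorem shows that $\cF$ fixes $\GF(\kappa)$ pointwise, has order exactly $d$ on $\GF(\kappa^d)$, and has $\GF(\kappa)$ as its fixed field. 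Thus the image of $\cF$ in each finite quotient is a generator, and these generators are compatible under the restriction maps of the inverse system.

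It then follows formally that the assignment $1 \mapsto \cF$ extends to a continuous homomorphism $\widehat{\bZ} \to G_{\GF(\kappa)}$, $\epsilon \mapsto \cF^{\epsilon}$, whose image surjects onto each finite quotient and which is therefore surjective (since its image is closed in the profinite topology); injectivity is immediate because any $\epsilon \in \widehat{\bZ}$ reducing to $0$ in $\bZ/d\bZ$ for every $d$ must be $0$. Hence $\cF$ is a topological generator and, combined with perfectness, the two axioms defining a quasi-finite field are satisfied.

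The main obstacle I anticipate is the careful transfer of the order of the Frobenius via {\L}o\'s: one must phrase ``$\cF_s^{d}$ is the identity on $\bF_{q_s^d}$ while no smaller power is'' as a first-order sentence that is uniform in $s$, which requires pinning down the underlying field $\bF_{q_s^d}$ by a definable condition (for instance, as the set of roots of $X^{q_s^d} - X$ in $\GF(\kappa)^{\alg}$) before taking the ultraproduct. Everything else is a routine combination of inverse limit formalism and {\L}o\'s' theorem.
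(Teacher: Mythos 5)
Your proposal is correct, but it organizes the argument differently from the paper. The paper's proof does not assemble the absolute Galois group as an inverse limit of finite quotients and argue via compatibility and surjectivity/injectivity of the induced map $\widehat{\bZ} \to G_{\GF(\kappa)}$; instead it reduces immediately to showing that $\cF$ restricted to each finite layer $\GF(\kappa^d)$ has order exactly $d$, and establishes this concretely: pick any irreducible polynomial $f$ of degree $d$ over $\GF(\kappa)$, invoke Nguyen's Theorem 3.42 to say that $f$ has a root $a \in \GF(\kappa^d)$ and that its $d$ roots are exactly the distinct elements $a, a^\kappa, \ldots, a^{(d-1)\kappa}$, observe that $\cF^i(a) = a^{i\kappa}$, and conclude that $1, \cF, \ldots, \cF^{d-1}$ are distinct, hence $\cF$ generates the cyclic group $\Gal(\GF(\kappa^d)/\GF(\kappa))$ of order $d$. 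This sidesteps the first-order expressibility concern you flag at the end: instead of trying to phrase "$\cF_s$ generates $\Gal(\bF_{q_s^d}/\bF_{q_s})$" as a transferable sentence, the paper transfers only facts about roots of polynomials (packaged in Nguyen's cited theorems), and then reads off the order of $\cF$ from distinctness of the conjugates of a single root. Your route is more structurally explicit (full inverse-limit formalism, explicit $\widehat{\bZ} \to G_{\GF(\kappa)}$ isomorphism) but at the cost of the extra care you yourself identify: pinning down $\bF_{q_s^d}$ definably and phrasing "order exactly $d$" uniformly in $s$ before applying Łoś. Both yield the same conclusion; the paper's version is shorter because it leans on the already-proved root structure of $\GF(\kappa^d)$, while yours is more self-contained modulo resolving the expressibility issue, which you correctly diagnose but leave to be filled in.
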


\begin{proof}

We know from Nguyen \cite[Corollary 3.27 or Theorem 3.35]{nguyen-ultrafinite-fields-2023} that $\GF(\kappa)$ is a quasi-finite field. We now prove that $\cF$ is a topological generator of the absolute Galois group $G_{\GF(\kappa)}$. It suffices to prove that for each positive integer $d$, $\cF$ is a generator of the cyclic group $\Gal(H/\GF(\kappa))$ for a unique extension $H$ of $\GF(\kappa)$ of degree $d$. Recall from Nguyen \cite[Theorem 3.35]{nguyen-ultrafinite-fields-2023} that a unique extension of $\GF(\kappa)$ of degree $d$ is the ultra-finite field $\GF(\kappa^d) = \prod_{s \in S}\bF_{q_s^d}/\cD$.

Take an arbitrary irreducible polynomial $f$ of degree $d$ over $\GF(\kappa)$. Then by Nguyen \cite[Theorem 3.42]{nguyen-ultrafinite-fields-2023}, $f$ has a root $a$ in $\GF(\kappa^d)$. Furthermore, all the roots of $f$ are simple, and are exactly distinct elements $a$, $a^{\kappa}$, $a^{2\kappa}, \ldots, a^{(d - 1)\kappa}$ of $\GF(\kappa^d)$. Note that for each $0 \le i \le d - 1$,
\begin{align*}
\cF^i(a) = \underbrace{\cF \circ \cF \circ \cdots \circ \cF}_{\text{$i$ copies of $\cF$}}(a) = a^{i\kappa},
\end{align*}
and thus $1 = \cF^0, \cF, \cdots, \cF^{d - 1}$ are $d$ distinct maps. Thus $\cF$ generates the cyclic group $\Gal(\GF(\kappa^d)/\GF(\kappa))$ of order $d$. Therefore $\cF$ is a topological generator of the absolute Galois group $G_{\GF(\kappa)}$.

\end{proof}

Using the above lemma, we can explicitly describe the isomorphism $\cP_n$ in Theorem \ref{thm-computation-of-local-symbols-serre}. In order that $\GF(\kappa)$ contains the $n$th roots of unity $\mu_n$, we know from Nguyen \cite[Example 4.10]{nguyen-ultrafinite-fields-2023} that it is necessary and sufficient that $n$ divides $\kappa - 1$ in $\bZ^{\#}$.

Suppose that $n$ is a positive integer that divides $\kappa - 1$ in $\bZ^{\#}$.

Given $x \in \GF(\kappa)^{\times}$, let $y \in \GF(\kappa)^{\alg^\times}$ be a solution to the equation $y^n = x$. Thus
\begin{align*}
\cP_n(x) = \cF(y)/y = y^{\kappa}/y = y^{\kappa - 1} = x^{\frac{\kappa - 1}{n}}.
\end{align*}

The following follows immediately from Theorem \ref{thm-computation-of-local-symbols-serre}.

\begin{corollary}
\label{cor-computation-of-local-symbols-nguyen}

Let $\alpha$ and $\beta$ be elements in $K^{\times}$. Set
\begin{align*}
\gamma = (-1)^{\upsilon(\alpha)\upsilon(\beta)} \dfrac{\alpha^{\upsilon(\beta)}}{\beta^{\upsilon(\alpha)}}.
\end{align*}
Then $\gamma$ is a root of unity in $K$. Furthermore, if $\overline{\gamma}$ denotes its image in $\GF(\kappa)^{\times}$, then
\begin{align*}
(\alpha, \beta)_{\upsilon} = \cP_n(\overline{\gamma}) = \overline{\gamma}^{\frac{\kappa - 1}{n}}.
\end{align*}

\end{corollary}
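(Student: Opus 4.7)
The plan is to derive this as an immediate specialization of Serre's general formula (Theorem~\ref{thm-computation-of-local-symbols-serre}) to the setting where the residue field is the ultra-finite field $\GF(\kappa)$, using the explicit description of the topological generator $\cF$ of $G_{\GF(\kappa)}$ furnished by Lemma~\ref{lem-ultra-frobenius-is-a-generator}.

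More concretely, I would first verify that all hypotheses of Theorem~\ref{thm-computation-of-local-symbols-serre} are in force. The field $K$ is complete under a discrete valuation $\upsilon$ with residue field $\GF(\kappa)$; by Lemma~\ref{lem-ultra-frobenius-is-a-generator}, $\GF(\kappa)$ is quasi-finite with topological generator given by the ultra-Frobenius $\cF$. The assumption that $n$ divides $\kappa-1$ in $\bZ^{\#}$ (cited from Nguyen \cite{nguyen-ultrafinite-fields-2023}, Example 4.10) guarantees that $\mu_n \subseteq \GF(\kappa)$, and hence $\mu_n \subseteq K$. Serre's theorem then applies verbatim, yielding that the element $\gamma := (-1)^{\upsilon(\alpha)\upsilon(\beta)} \alpha^{\upsilon(\beta)}/\beta^{\upsilon(\alpha)}$ is a root of unity in $K$, and that $(\alpha,\beta)_{\upsilon} = \cP_n(\overline{\gamma})$ where $\overline{\gamma}$ is the reduction of $\gamma$ in $\GF(\kappa)^{\times}$.

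It then remains to identify $\cP_n(\overline{\gamma})$ with $\overline{\gamma}^{(\kappa-1)/n}$. This is exactly the explicit computation of $\cP_n$ displayed in the paragraph immediately preceding the statement: choose $y \in \GF(\kappa)^{\alg^\times}$ with $y^n = \overline{\gamma}$; since $\cF$ is the ultra-Frobenius which acts on any element by raising to the ultrapower $\kappa$ (by Lemma~\ref{lem-ultra-frobenius-is-a-generator}), one has
\begin{align*}
\cP_n(\overline{\gamma}) \;=\; \frac{\cF(y)}{y} \;=\; \frac{y^{\kappa}}{y} \;=\; y^{\kappa - 1} \;=\; \overline{\gamma}^{(\kappa-1)/n},
\end{align*}
where the last equality uses that $n \mid \kappa - 1$ in $\bZ^{\#}$ so that $(\kappa-1)/n$ makes sense as a hyperinteger acting on $\GF(\kappa)^{\alg^\times}$ via the ultrapower action defined in Subsection~\ref{subsec-action-of-hyperintegers-on-ultraproducts}.

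There is essentially no serious obstacle here — the statement is a direct corollary — but the one subtle point worth emphasizing is the implicit use of the \emph{hyperinteger} exponent $(\kappa-1)/n$ rather than an ordinary integer exponent; one should check that the computation $y^{\kappa-1} = (y^n)^{(\kappa-1)/n}$ is legitimate for the $\bZ^{\#}$-action on $\GF(\kappa)^{\alg^\times}$, which follows from property (iii) of the ultrapower action recorded in Subsection~\ref{subsec-action-of-hyperintegers-on-ultraproducts}. Once this is noted, the two displayed equalities $(\alpha,\beta)_{\upsilon} = \cP_n(\overline{\gamma}) = \overline{\gamma}^{(\kappa-1)/n}$ are both established and the proof is complete.
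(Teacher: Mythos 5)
Your proposal follows exactly the paper's route: the corollary is stated as following immediately from Serre's Theorem \ref{thm-computation-of-local-symbols-serre}, with the identification $\cP_n(\overline\gamma)=\overline\gamma^{(\kappa-1)/n}$ supplied by the computation in the paragraph just before the corollary, which in turn rests on Lemma \ref{lem-ultra-frobenius-is-a-generator}. Your extra remark about verifying $(y^n)^{(\kappa-1)/n}=y^{\kappa-1}$ via property (iii) of the $\bZ^{\#}$-action is a sensible point the paper leaves implicit, but it does not change the argument.
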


\subsection{Quadratic residue symbol in $\GF(\kappa)[t]$}
\label{subsec-quadratic-residue-symbol}

For the rest of this paper, we let $\bA = \GF(\kappa)[t]$ be the polynomial ring over the ultra-finite field $\GF(\kappa)$, where $\kappa = \ulim_{s \in S}q_s$ as in Subsection \ref{subsec-local-symbols-when-k-is-an-ultra-finite-field}. We also fix the notation $\bK$ for the rational field $\GF(\kappa)(t)$ which is the field of fractions of $\bA$.

Suppose that $n$ is a positive integer that divides $\kappa - 1$ in $\bZ^{\#}$. This condition is equivalent to that $\GF(\kappa)$ contains the $n$-th roots of unity $\mu_n$ (see Nguyen \cite[Example 4.10]{nguyen-ultrafinite-fields-2023}). For each $s \in S$, let $\bA_s = \bF_{q_s}[t]$ be the polynomial ring over $\bF_{q_s}$. Let $\cU(\bA)$ be the ultrahull of $\bA$, i.e., $\cU(\bA) = \prod_{s \in S}\bA_s/\cD$. Let $P$ be an irreducible polynomial of degree $d \in \bZ_{>0}$ in $\bA$, and let $\alpha$ be a polynomial in $\bA$ such that $\alpha, P$ are relatively prime in $\bA$. We recall from Nguyen \cite[Definition 4.12]{nguyen-ultrafinite-fields-2023} that the $n$th power residue symbol $\left(\dfrac{\alpha}{P}\right)$ is the unique element in the subgroup $\mu_n$ of the $n$-th roots of unity of $\GF(\kappa)^{\times}$ such that
\begin{align}
\label{def-nth-power-residue-symbol}
\alpha^{\frac{\kappa^d - 1}{n}} \equiv \left(\dfrac{\alpha}{P}\right) \pmod{P\cU(\bA)}.
\end{align}

If $P$ divides $\alpha$, we simply define $\left(\dfrac{\alpha}{P}\right) = 0$.

We recall some basic properties of the $n$-th power residue symbol (see Nguyen \cite[Proposition 4.15]{nguyen-ultrafinite-fields-2023}).

\begin{proposition}
\label{prop-properties-of-power-residue-symbol}

The $n$-th power residue symbol has the following properties: for any $\alpha, \lambda \in \bA$,
\begin{itemize}

\item[(i)] $\left( \dfrac{\alpha}{P} \right)_n = \left( \dfrac{\lambda}{P} \right)_n$ iff $\alpha \equiv \lambda \pmod{P\bA}$.

\item[(ii)] $\left( \dfrac{\alpha\lambda}{P} \right)_n = \left( \dfrac{\alpha}{P} \right)_n \left( \dfrac{\lambda}{P}\right)_n$.

\item[(iii)] $\left( \dfrac{\alpha}{P} \right)_n = 1$ if and only if the equation $x^n \equiv \alpha \pmod{P\cU(\bA)} $ is solvable in $\bA$. If $n$ is a positive integer, then $\left( \dfrac{\alpha}{P} \right)_n = 1$ if and only if the equation $x^n \equiv \alpha \pmod{P\bA}$ is solvable.

\item[(iv)] Let $\zeta$ be an element in $\GF(\kappa)^\times$ of order dividing $n$. Then there exists an element $\alpha \in \bA$ such that
\begin{align*}
\left( \dfrac{\alpha}{P} \right)_n  = \zeta.
\end{align*}

\end{itemize}

\end{proposition}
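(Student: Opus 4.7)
The plan is to prove all four parts by transferring the corresponding classical facts for $\bF_{q_s}[t]$ via \L{}o\'s' theorem. Write $P = \ulim_{s \in S} P_s$, $\alpha = \ulim_{s \in S} \alpha_s$, and $\lambda = \ulim_{s \in S} \lambda_s$ with each $P_s, \alpha_s, \lambda_s \in \bA_s = \bF_{q_s}[t]$. Because $P$ is irreducible of degree $d \in \bZ_{>0}$ in $\bA$, \L{}o\'s' theorem (applied to the first-order predicates expressing ``$P$ is irreducible of degree $d$'') ensures that $P_s$ is irreducible of degree $d$ in $\bA_s$ for $\cD$-almost all $s \in S$, so that $\bA_s/P_s\bA_s \cong \bF_{q_s^d}$. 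Combined with Proposition~\ref{prop-notion-section-algebraic-structures-of-ultraproducts}(ii) this yields $\cU(\bA)/P\cU(\bA) \cong \GF(\kappa^d)$. Since $n \in \bZ_{>0}$ divides $\kappa - 1$ in $\bZ^{\#}$, \L{}o\'s' theorem also gives $n \mid q_s - 1$, and hence $n \mid q_s^d - 1$, for $\cD$-almost all $s$.

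For each such $s$, the classical $n$-th power residue symbol $\left(\frac{\alpha_s}{P_s}\right)_n$ is the unique element of $\mu_n \subset \bF_{q_s}^{\times}$ satisfying $\alpha_s^{(q_s^d - 1)/n} \equiv \left(\frac{\alpha_s}{P_s}\right)_n \pmod{P_s\bA_s}$. Using that $\kappa^d = \ulim_s q_s^d$ (since $d$ is standard) and the description of the ultrapower action of $\bZ^{\#}$ from Subsection~\ref{subsec-action-of-hyperintegers-on-ultraproducts}, the hyperinteger exponent $(\kappa^d - 1)/n$ acts componentwise as $(q_s^d - 1)/n$. The uniqueness in the defining congruence~(\ref{def-nth-power-residue-symbol}) then identifies
\begin{align*}
\left(\frac{\alpha}{P}\right)_n = \ulim_{s \in S} \left(\frac{\alpha_s}{P_s}\right)_n,
\end{align*}
reducing each assertion to a transfer of the corresponding classical statement for $\bF_{q_s}[t]$.

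With this identification in hand the four parts follow quickly. For (i), the congruence $\alpha \equiv \lambda \pmod{P\bA}$ transfers to $\alpha_s \equiv \lambda_s \pmod{P_s\bA_s}$ for $\cD$-almost all $s$, and classical well-definedness of the residue symbol modulo $P_s$ then gives equal symbols componentwise, so the ultralimits coincide. For (ii), the map $x \mapsto x^{(q_s^d - 1)/n}$ on $(\bA_s/P_s\bA_s)^{\times}$ is multiplicative, whence the symbol is multiplicative componentwise and \L{}o\'s' theorem extends this to $\bA$. For (iii), the homomorphism $(\bF_{q_s^d})^{\times} \to \mu_n$ defined by $x \mapsto x^{(q_s^d - 1)/n}$ has kernel equal to the subgroup of $n$-th powers (by cyclicity of $(\bF_{q_s^d})^{\times}$ together with $n \mid q_s^d - 1$), so $\left(\frac{\alpha_s}{P_s}\right)_n = 1$ iff $\alpha_s$ is an $n$-th power modulo $P_s$; transferring this characterization delivers both statements of (iii). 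For (iv), given $\zeta = \ulim_s \zeta_s \in \mu_n$, cyclicity of $(\bF_{q_s^d})^{\times}$ provides $x_s \in (\bF_{q_s^d})^{\times}$ with $x_s^{(q_s^d - 1)/n} = \zeta_s$ for $\cD$-almost all $s$; lifting each $x_s$ to $\alpha_s \in \bA_s$ and taking the ultralimit produces the required $\alpha \in \bA$.

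The principal technical point, rather than a true obstacle, is keeping the ultrapower exponentiation by the hyperinteger $(\kappa^d - 1)/n$ in strict agreement with the componentwise exponentiation by $(q_s^d - 1)/n$, and verifying that each classical identity in play is a first-order formula in $\cL_{\mathrm{rings}}$ (or its extension by parameters from the components) so that \L{}o\'s' theorem applies. Both are handled by the material of Section~\ref{sec-basic-notions}, in particular the description of the action of $\bZ^{\#}$ given in Subsection~\ref{subsec-action-of-hyperintegers-on-ultraproducts}.
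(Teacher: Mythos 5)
The paper does not give its own proof of this proposition: it is quoted verbatim from Nguyen \cite[Proposition~4.15]{nguyen-ultrafinite-fields-2023} with no argument supplied here, so there is nothing in this paper to compare your proof against.

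Your transfer argument is sound as a self-contained proof. The pivotal identification $\left(\tfrac{\alpha}{P}\right)_n = \ulim_{s}\left(\tfrac{\alpha_s}{P_s}\right)_n$ does follow from the uniqueness clause in the defining congruence~(\ref{def-nth-power-residue-symbol}), since $(\kappa^d - 1)/n$ is a genuine hypernatural (because $n \mid q_s - 1$ implies $n \mid q_s^d - 1$ for $\cD$-almost all $s$) and the nonnegative ultrapower map from Subsection~\ref{subsec-action-of-hyperintegers-on-ultraproducts} acts componentwise by $\alpha_s \mapsto \alpha_s^{(q_s^d-1)/n}$; note this also handles the degenerate case $P \mid \alpha$ correctly, with both sides equal to~$0$. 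Two small points worth making explicit. First, in part~(iv) you must take the lift $\alpha_s$ of $x_s \in \bA_s/P_s\bA_s$ to be the representative of degree $< d$ in $\bA_s$, so that the degrees of the $\alpha_s$ are constant for $\cD$-almost all $s$ and the ultralimit lands in $\bA = \GF(\kappa)[t]$ rather than merely in $\cU(\bA)$. Second, part~(i) as printed reads ``iff'', but the ``only if'' direction is false (distinct residues can have equal symbols); the correct statement, and the one you prove, is the implication from $\alpha \equiv \lambda \pmod{P\bA}$ to equality of symbols --- this matches the classical property (cf.\ \cite[Proposition~3.1]{rosen}), so the ``iff'' should be read as a misprint for ``if''.
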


We can extend the notion of the $n$-th power residue symbol to the case where the prime $P$ is replaced by an arbitrary nonzero element in $\bA$ of positive degree. For a nonzero polynomial $\beta \in \bA$, write $\beta = aP_1^{d_1}\cdots P_m^{d_m}$ for the prime factorization of $\beta$ into monic irreducible polynomials $P_i$ in $\bA$, where $a \in \GF(\kappa)^{\times}$ is the leading coefficient of $\beta$ and the $d_i$ are positive integers. The constant $a$ is called the \textbf{sign of $\beta$} which we subsequently denote by $a = \sign(\beta)$. Define
\begin{align}
\label{def-general-nth-power-residue-symbol}
\left(\dfrac{\alpha}{\beta}\right)_n := \prod_{i = 1}^m  \left(\dfrac{\alpha}{P_i}\right)^{d_i}_n \in \mu_n \subseteq \GF(\kappa)^{\times}.
\end{align}

For each polynomial $f \in \bA$, let  $a = \ulim_{s \in S}a_s \in \GF(\kappa)^{\times}$ be the leading coefficient of $f$, where the $a_s$ belong in $\bF_{q_s}^{\times}$. Set
\begin{align}
\label{def-signature-of-kappa-1/n-ultra-power}
\text{sign}_n(f) = a^{\frac{\kappa - 1}{n}} = \ulim_{s\in S}a_s^{\frac{q_s - 1}{n}} \in \GF(\kappa)^{\times}.
\end{align}

We recall the general higher reciprocity law that is proved in Nguyen \cite[Theorem 5.7]{nguyen-ultrafinite-fields-2023}.

\begin{theorem}
(The general reciprocity law)
\label{thm-General-Reciprocity-Law}

Let $\alpha, \beta$ be relatively prime nonzero elements in $\bA$. Then
\begin{align*}
\left( \dfrac{\alpha}{\beta} \right)_n \left(\dfrac{\beta}{\alpha} \right)_n^{-1} = (-1)^{\frac{\kappa-1}{n} \deg(\alpha)\deg(\beta)} \text{sign}_n(\alpha)^{\deg(\beta)}  \text{sign}_n(\beta)^{-\deg(\alpha)}.
\end{align*}

\end{theorem}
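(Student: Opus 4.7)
The plan is to deduce the reciprocity law by transferring the classical higher-power reciprocity law for polynomial rings over finite fields (Carlitz's law; see, e.g., Rosen, \emph{Number Theory in Function Fields}, Chapter 3) to the ultra-finite setting via \L{}o\'s' theorem. For each $s \in S$ with $n \mid q_s - 1$, the classical law states that for coprime nonzero $\alpha_s, \beta_s \in \bA_s := \bF_{q_s}[t]$,
\begin{align*}
\left(\frac{\alpha_s}{\beta_s}\right)_n \left(\frac{\beta_s}{\alpha_s}\right)_n^{-1} = (-1)^{\frac{q_s - 1}{n}\deg(\alpha_s)\deg(\beta_s)} \sign_n(\alpha_s)^{\deg(\beta_s)} \sign_n(\beta_s)^{-\deg(\alpha_s)}.
\end{align*}
The hypothesis $n \mid \kappa - 1$ in $\bZ^{\#}$ is equivalent by \L{}o\'s to $n \mid q_s - 1$ for $\cD$-almost all $s \in S$, so this classical identity is available on a $\cD$-large set of indices.

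Writing $\alpha = \ulim_{s \in S}\alpha_s$ and $\beta = \ulim_{s \in S}\beta_s$ with $\alpha_s, \beta_s \in \bA_s$, I would first verify that all the ingredients in the identity transfer correctly componentwise. Coprimality transfers: the first-order sentence asserting the existence of Bezout coefficients holds in $\bA$, hence (by \L{}o\'s) in $\bA_s$ for $\cD$-almost all $s$. Degrees transfer: since $\alpha, \beta \in \bA = \GF(\kappa)[t]$ have standard natural degrees, $\deg(\alpha_s) = \deg(\alpha)$ and $\deg(\beta_s) = \deg(\beta)$ for $\cD$-almost all $s$. The sign function transfers directly: $\sign_n(\alpha) = \ulim_s \sign_n(\alpha_s)$ follows from the definition of $\sign_n$ via the ultrapower map with hyperinteger exponent $(\kappa - 1)/n$.

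The crucial compatibility to prove is that the ultra $n$-th power residue symbol agrees with the ultraproduct of classical symbols:
\begin{align*}
\left(\frac{\alpha}{\beta}\right)_n = \ulim_{s \in S}\left(\frac{\alpha_s}{\beta_s}\right)_n.
\end{align*}
I would establish this by unwinding the defining congruence $\alpha^{(\kappa^d - 1)/n} \equiv \left(\frac{\alpha}{P}\right)_n \pmod{P\cU(\bA)}$ for $P$ irreducible of degree $d$: the ring $\bA/P\bA$ is the ultraproduct of the residue fields $\bA_s/P_s\bA_s$ with internal cardinality $\kappa^d = \ulim q_s^{d_s}$, so \L{}o\'s applied to the defining congruence equates the ultra-symbol with the ultraproduct of the classical symbols. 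Once this is in place, extend to composite $\beta$ by the multiplicativity in Proposition \ref{prop-properties-of-power-residue-symbol}, then take the ultraproduct of the classical identities to conclude.

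The main obstacle is establishing the compatibility above rigorously, and in particular handling the prime factorization of $\beta$: irreducibility of a polynomial in $\bA = \GF(\kappa)[t]$ need not transfer factor-wise to irreducibility in $\bA_s$ for most $s$, so one must argue that the prime factorization of $\beta$ in $\bA$ corresponds, under the ultraproduct, to compatible factorizations of the $\beta_s$ in $\bA_s$ on a $\cD$-large set, with the right internal cardinality arising in each residue field. This is precisely the setting for the results on residue fields of irreducible polynomials over ultra-finite fields in Nguyen \cite{nguyen-ultrafinite-fields-2023} (e.g., the ultra-Frobenius description underlying Lemma \ref{lem-ultra-frobenius-is-a-generator}), which are designed to make exactly such transfers work.
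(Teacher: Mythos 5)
The paper does not prove this theorem; it cites the result to Nguyen \cite[Theorem 5.7]{nguyen-ultrafinite-fields-2023}, so there is no in-paper proof to compare your argument against. Your \L{}o\'s-transfer strategy is, however, a natural and (I believe) correct route to the statement, and it is consistent with the machinery the paper sets up (ultrapower maps with hyperinteger exponents, internal cardinalities, the ultra-Frobenius), so it plausibly mirrors the argument in the cited reference.

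The obstacle you flag is the right thing to worry about, but it resolves more easily than your phrasing suggests: irreducibility of a polynomial of fixed \emph{standard} degree transfers cleanly under \L{}o\'s. If $P \in \bA = \GF(\kappa)[t]$ is monic irreducible of degree $d$ and $P = \ulim_{s} P_s$ with $\deg P_s = d$ for $\cD$-almost all $s$, then irreducibility of a degree-$d$ polynomial is a first-order property of its coefficients over the base field (a universal statement about the bounded coefficient tuples of potential factors of degree less than $d$), so \L{}o\'s applied to $\GF(\kappa) = \prod_s \bF_{q_s}/\cD$ gives irreducibility of $P_s$ over $\bF_{q_s}$ for $\cD$-almost all $s$. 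Likewise, the identity $\beta = a P_1^{d_1}\cdots P_k^{d_k}$ in $\bA$ is a finite system of polynomial equations in coefficients over $\GF(\kappa)$ and transfers to $\beta_s = a_s P_{1,s}^{d_1}\cdots P_{k,s}^{d_k}$ for $\cD$-almost all $s$, and pairwise coprimality of the $P_i$ transfers via bounded-degree B\'ezout witnesses. So the factorization of $\beta_s$ in $\bA_s$ is componentwise the factorization of $\beta$. The prime-case compatibility $\left(\frac{\alpha}{P}\right)_n = \ulim_s\left(\frac{\alpha_s}{P_s}\right)_n$ then follows directly from (\ref{def-nth-power-residue-symbol}) together with the ultrapower operation, extends to composite $\beta$ by the definitional multiplicativity (\ref{def-general-nth-power-residue-symbol}), and taking $\ulim_s$ of Rosen's reciprocity law over each $\bF_{q_s}[t]$ yields the stated identity. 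With that gap filled in, the proposal is sound.
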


Whenever $n = 2$, we suppress the subscript $2$ in the quadratic residue symbol, and thus denote by $\left( \dfrac{\cdot}{\cdot} \right)$ for the quadratic residue symbol.

\begin{remark}
\label{rem-fixed-notation-about-local-symbols}

For the rest of the paper, unless otherwise stated, we only consider local symbols $(\alpha, \beta)_{\upsilon}$ in the $P$-adic completion $\bK_P$ of the rational function field $\bK := \GF(\kappa)(t)$ that corresponds to $n = 2$, where $\upsilon$ is the $P$-adic valuation $v_P$. By abuse of notation, we also write $(\alpha, \beta)_P$ for the local symbols $(\alpha, \beta)_{v_P}$. Thus these local symbols only take values $\pm 1$, and can be used to verify the ramification of certain quaternion algebras as follows. For $a, b \in \bK$, the quaternion algebra  $\bH_{a, b} = \bK \cdot 1 \bigoplus \bK \cdot \alpha \bigoplus \bK \cdot \beta \bigoplus \bK \cdot \alpha\beta$, where $\alpha^2 = a$ and $\beta^2 = b$ such that $\alpha\beta = -\beta\alpha$, is split (or unramified) at a prime $P$, i.e., $\bH_{a, b} \otimes \bK_P$ is isomorphic to the algebra of $2\times 2$ matrices over $\bK_P$ if and only $(a, b)_P = 1$, and $\bH_{a, b}$ is nonsplit (or ramified) at $P$, i.e., $\bH_{a, b} \otimes \bK_P$ is a division algebra over $\bK_P$ if and only if $(a, b)_P = -1$.

\end{remark}

\section{A definition of $\bA$ in $\bK$}
\label{sec-definition-of-A-in-K}

We let $\bA = \GF(\kappa)[t]$ and $\bK = \GF(\kappa)(t)$ as in the last section. We further fix the following notation.
\begin{description}[style=multiline, labelwidth=1.5cm]

\item [\namedlabel{itm: N1}{(N1)}] $\bP = \{P \in \bA \; : \; \text{$P$ is a monic irreducible polynomial of positive degree}\}$, and let $\infty = 1/t$ denote the infinite prime of $\bA$.

\item [\namedlabel{itm: N2}{(N2)}] For a given prime $P \in \bP \cup \{\infty\}$, let $\bK_P$ denotes the $P$-adic completion of $\bK$ with respect to the standard $P$-adic valuation $v_P$ of $\bK$. The $\infty$-adic valuation corresponds to the valuation $-\deg$ in which case $\bK_{\infty} = \GF(\kappa)((1/t))$. Let $\cO_{\bK_P}$ denote the valuation ring of $\bK_P$ that consists of all elements $\alpha$ in $\bK_P$ such that $v_P(\alpha) \ge 0$.

    The valuation ring of $\bK$ with respect to $v_P$ will be denoted by $\bA_P$ that can be written in the form
    \begin{align*}
    \bA_P := \{\alpha \in \bK = \GF(\kappa)(t) \; | \; v_P(\alpha) \ge 0 \}.
    \end{align*}

    For every non-infinite prime $P \in \bP$, we know from Bourbaki \cite[Corollary 2, p.380]{bourbaki-CA} that $\bA_P$ is the localization of $\bA$ at prime $P$, that consists exactly of the fractions $\dfrac{\alpha}{\beta} \in \bK$, where $\alpha, \beta$ are relatively prime polynomials in $\bA$ such that $P$ does not divide $\beta$.

    Also by Bourbaki \cite[Corollary 2, p.380]{bourbaki-CA}, the valuation ring $\bA_{\infty}$ is the localization of the ring $\GF(\kappa)[1/t]$ at the prime ideal $(1/t)\GF(\kappa)[1/t]$; in other words, $\bA_{\infty}$ consists exactly of the fractions $\dfrac{\alpha}{\beta}$, where $\alpha, \beta \in \bA$ such that $\deg(\alpha) \le \deg(\beta)$.

    The residue field of $\bK_P$ will denoted by $\GF(\kappa)_P$ which is isomorphic to either the field $\bA/P\bA$ whenever $P \in \bP$ or $\GF(\kappa)$ whenever $P = \infty$. Note that the field $\bA/P\bA$ is the unique extension field of degree $\deg(P)$ over $\GF(\kappa)$ which is also an ultra-finite field. Following the notation in Nguyen \cite{nguyen-ultrafinite-fields-2023}, $\bA/P\bA$ in fact is the ultra-finite field $\GF(\kappa^d) = \prod_{s \in S}\bF_{q_s^d}/\cD$, where $d = \deg(P)$. The corresponding residue map $\cO_{\bK_P} \to \GF(\kappa)_P$ is denoted by $\red_P$.

    For an arbitrary prime $Q \in \bA$, let $P = Q/a$, where $a$ is the leading coefficient of $Q$. Thus $P$ is a monic prime in $\bP$. The $Q$-adic valuation is exactly the $P$-adic valuation, and thus we will also use notation $v_Q, \bK_Q, \bA_Q, \red_Q$ to denote $v_P, \bK_P, \bA_P, \red_P$, respectively.

\item [\namedlabel{itm: N3}{(N3)}] for elements $a, b \in \bK$, let $\bH_{a, b}$ is the quaternion algebra $\bK \cdot 1 \bigoplus \bK \cdot \alpha \bigoplus \bK \cdot \beta \bigoplus \bK \cdot \alpha\beta$, where $\alpha^2 = a$ and $\beta^2 = b$ such that $\alpha\beta = -\beta\alpha$.

\item [\namedlabel{itm: N4}{(N4)}] for a given element $a \in \bK$, let
\begin{align*}
\bP(a) = \{P \in \bP \cup \{\infty\} \; : \;  \text{$v_P(a)$ is odd}\}.
\end{align*}

\item [\namedlabel{itm: N5}{(N5)}] for elements $a, b \in \bK^{\times}$, let
\begin{align*}
\Delta_{a, b} = \{P \in \bP \cup \{\infty\} \; : \; \text{$\bH_{a, b}$ is nonsplit (or ramified) at $P$}\}.
\end{align*}

\item [\namedlabel{itm: N6}{(N6)}] for elements $a, b \in \bK$, let
\begin{align*}
\widetilde{\cR_{a, b}} := \bigcup_{P \in \Delta_{a, b}\cap (\bP(a) \cup \bP(b))}\bA_P,
\end{align*}

\end{description}

Using notation \ref{itm: N2}, the following result is immediate from Bourbaki \cite[Theorem 3, p.378]{bourbaki-CA}, and will be useful in the proof of Theorem \ref{thm-main-thm1} below.

\begin{lemma}
\label{lem-intersection-of-valuation-rings}

We have
\begin{align*}
\bA = \bigcap_{P \in \bP, P \ne \infty}\bA_P.
\end{align*}

\end{lemma}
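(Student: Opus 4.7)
The plan is to reduce the lemma to a standard fact about unique factorization domains. Since $\GF(\kappa)$ is a field (indeed a quasi-finite field by Lemma \ref{lem-ultra-frobenius-is-a-generator}), the polynomial ring $\bA = \GF(\kappa)[t]$ is a principal ideal domain, hence a UFD with fraction field $\bK$. The forward inclusion $\bA \subseteq \bigcap_{P \in \bP} \bA_P$ is immediate from the definition of each $\bA_P$ as a localization of $\bA$ at the prime ideal generated by $P$.

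For the reverse inclusion, I would argue directly rather than invoking the cited Bourbaki theorem. Take an arbitrary $x \in \bigcap_{P \in \bP} \bA_P$ and write $x = f/g$ in lowest terms, meaning $f, g \in \bA$ with $g \ne 0$ and $\gcd(f,g) = 1$ in the UFD $\bA$. If $\deg(g) > 0$, then $g$ admits at least one monic irreducible factor $P \in \bP$, and coprimality forces $P \nmid f$. Consequently $v_P(f) = 0$ while $v_P(g) \ge 1$, yielding $v_P(x) = v_P(f) - v_P(g) \le -1 < 0$, which contradicts $x \in \bA_P$. Hence $g$ must be a nonzero constant in $\GF(\kappa)^{\times}$, so $x = f/g \in \bA$.

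There is no serious obstacle: the lemma is essentially the content of the UFD structure of $\bA$, and the proof occupies a single paragraph. One small point worth emphasizing is that excluding $\infty$ from the intersection is essential rather than cosmetic: since $v_{\infty} = -\deg$, one has $v_{\infty}(t) = -1$ and therefore $t \notin \bA_{\infty}$, so $\bA \not\subseteq \bA_{\infty}$. This reflects the fact that $\infty$ is a prime of the field $\bK$ but not of the domain $\bA$, which is why the height-one primes parametrizing the intersection are exactly the elements of $\bP$.
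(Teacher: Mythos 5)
Your proof is correct and self-contained. The paper itself does not give an argument here: it simply cites Bourbaki (\cite[Theorem~3, p.~378]{bourbaki-CA}), which is the general statement that a Krull domain (in particular any UFD, and a fortiori the PID $\bA = \GF(\kappa)[t]$) is the intersection of the localizations at its height-one primes. What you have done is unwind that general theorem in the special case of a PID with the elementary lowest-terms argument: if $x = f/g$ with $\gcd(f,g)=1$ and $g$ non-constant, a monic irreducible factor $P$ of $g$ gives $v_P(x) < 0$, contradicting $x \in \bA_P$. This is the standard proof of the Bourbaki result specialized to a UFD, so the two routes differ only in that yours is explicit and the paper's is by citation. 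Your closing observation about $\infty$ is also correct as a sanity check, with the minor caveat that in the paper's notation \ref{itm: N1} the set $\bP$ already excludes $\infty$ by definition, so the qualifier ``$P \ne \infty$'' in the lemma statement is vacuous; your point that $\bA \not\subseteq \bA_\infty$ (since $v_\infty(t) = -1$) nonetheless correctly explains why $\infty$ must not appear among the primes being intersected.
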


\begin{remark}
\label{rem-residue-symbol-equal-reduction-to-a-power}

For a given monic prime $P \in \bA$, since $\bA/P\bA \cong \cU(\bA)/P\cU(\bA)$ (see Nguyen \cite{nguyen-ultrafinite-fields-2023} or \cite{schoutens}), we obtain the following commutative diagram
\begin{center}
\begin{tikzcd}
  \bA \arrow[r, hook]{r}{\iota} \arrow[dr, rightarrow]{dr}{\red_P}
  & \cU(\bA) \arrow[d] \\
  & \bA/P\bA \cong \cU(\bA)/P\cU(\bA)
\end{tikzcd}
\end{center}
where $\iota : \bA \hookrightarrow \cU(\bA)$ is an embedding, and $\red_P$ denotes the restriction of the residue map to $\bA$. Thus we deduce from the diagram that for any polynomial $\alpha \in \bA$,
\begin{align*}
\alpha \equiv \red_P(\alpha) \pmod{P\cU(\bA)},
\end{align*}
where we identify $\bA/P\bA \cong \cU(\bA)/P\cU(\bA)$ with a unique extension $\GF(\kappa^{\deg(P)})$ of degree $\deg(P)$ over $\GF(\kappa)$. It follows from (\ref{def-nth-power-residue-symbol}) that
\begin{align}
\label{e-quadratic-residue-symbol-equal-reduction-to-a-power}
\left(\dfrac{\alpha}{P}\right)_n = \red_P(\alpha)^{\frac{\kappa^{\deg(P)} - 1}{n}}.
\end{align}

\end{remark}

Using notation \ref{itm: N2} above and replacing $\GF(\kappa)$ by $\GF(\kappa^{\deg(P)})$ as the residue field in Corollary \ref{cor-computation-of-local-symbols-nguyen}, we immediately obtain the following.

\begin{theorem}
\label{thm-Nguyen-formula-for-local-symbols-K_P}

Let $\alpha$ and $\beta$ be elements in $\bK_P^{\times}$. \begin{itemize}

\item [(i)] if $P$ is a monic irreducible polynomial in $\bA$ of positive degree, then
\begin{align*}
(\alpha, \beta)_P = \left((-1)^{v_P(\alpha)v_P(\beta)} \red_P\left(\dfrac{\alpha^{v_P(\beta)}}{\beta^{v_P(\alpha)}} \right)\right)^{\frac{\kappa^{\deg(P)} - 1}{n}}.
\end{align*}

\item [(ii)] if $P$ is the infinite prime $\infty = 1/t$, then
\begin{align*}
(\alpha, \beta)_{\infty} = \left((-1)^{v_{\infty}(\alpha)v_{\infty}(\beta)} \red_{\infty}\left(\dfrac{\alpha^{v_{\infty}(\beta)}}{\beta^{v_{\infty}(\alpha)}} \right)\right)^{\frac{\kappa - 1}{n}}.
\end{align*}

\end{itemize}

\end{theorem}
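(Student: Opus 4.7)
The plan is to recognize the theorem as an immediate consequence of Corollary \ref{cor-computation-of-local-symbols-nguyen} once the residue field of $\bK_P$ has been correctly identified in each of the two cases. Throughout, we remain under the standing hypotheses of Subsection \ref{subsec-local-symbols-when-k-is-an-ultra-finite-field}: either the residue field has characteristic $0$, or $n$ is prime to the residue characteristic, and $\mu_n$ lies in the residue field (equivalently, $n$ divides the appropriate $\kappa^{\deg(P)} - 1$ in $\bZ^{\#}$).

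For part (i), with $P \in \bP$ of positive degree $d = \deg(P)$, notation \ref{itm: N2} identifies $\bK_P$ as a field complete under the discrete valuation $v_P$ with residue field $\bA/P\bA \cong \GF(\kappa^d) = \prod_{s \in S} \bF_{q_s^d}/\cD$, an ultra-finite field of internal cardinality $\kappa^d$. Applying Lemma \ref{lem-ultra-frobenius-is-a-generator} to $\GF(\kappa^d)$ in place of $\GF(\kappa)$ shows that this residue field is quasi-finite, with the ultra-Frobenius $x \mapsto x^{\kappa^d}$ serving as a topological generator of its absolute Galois group. Substituting $\GF(\kappa^d)$ for $\GF(\kappa)$ in the statement of Corollary \ref{cor-computation-of-local-symbols-nguyen} replaces the exponent $(\kappa - 1)/n$ by $(\kappa^{\deg(P)} - 1)/n$, and the displayed formula follows by setting $\gamma = (-1)^{v_P(\alpha)v_P(\beta)}\alpha^{v_P(\beta)}/\beta^{v_P(\alpha)}$ and noting $\overline{\gamma} = \red_P(\gamma)$.

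For part (ii), the infinite prime gives $\bK_\infty = \GF(\kappa)((1/t))$, whose residue field is $\GF(\kappa)$ itself by notation \ref{itm: N2}; since $\GF(\kappa)$ is quasi-finite by Lemma \ref{lem-ultra-frobenius-is-a-generator}, Corollary \ref{cor-computation-of-local-symbols-nguyen} applies verbatim with exponent $(\kappa - 1)/n$. There is no genuine obstacle here: the result is a direct corollary, and the only point needing care is tracking which ultra-finite residue field appears at each prime, since its internal cardinality controls the exponent in the final formula. The deeper work has already been carried out in Lemma \ref{lem-ultra-frobenius-is-a-generator} and its consequences, where the explicit topological generator of the absolute Galois group of an ultra-finite field was established; this is precisely what converts Serre's abstract topological-generator formula (Theorem \ref{thm-computation-of-local-symbols-serre}) into the concrete power map with hypernatural exponent appearing here.
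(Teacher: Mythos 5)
Your proof is correct and takes exactly the same route as the paper: identify the residue field of $\bK_P$ as $\GF(\kappa^{\deg(P)})$ for a finite prime (resp. $\GF(\kappa)$ for $\infty$), verify via Lemma \ref{lem-ultra-frobenius-is-a-generator} that the appropriate ultra-Frobenius is a topological generator, and read off the result from Corollary \ref{cor-computation-of-local-symbols-nguyen} with the internal cardinality of the residue field governing the exponent. The paper compresses this into a single sentence before the theorem statement; your write-up simply makes the substitution and the residue-field bookkeeping explicit.
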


From the above theorem and the general reciprocity law \ref{thm-General-Reciprocity-Law}, we deduce an analogue of the Hilbert reciprocity law for the rational function field $\bK = \GF(\kappa)(t)$.

\begin{theorem}
\label{thm-analogue-of-the-Hilbert-reciprocity-law}
(Analogue of the Hilbert reciprocity law for $\bK = \GF(\kappa)(t)$)

For any nonzero elemenets $\alpha, \beta \in \bK$,
\begin{align*}
\prod_{P \in \bP \cup \{\infty\}}(\alpha, \beta)_P = 1.
\end{align*}

\end{theorem}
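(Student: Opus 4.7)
The plan is to imitate the classical proof of Hilbert reciprocity for $\bF_q(t)$: first reduce to a few explicit cases by bi-multiplicativity of the local symbol, then use the explicit formulas of Theorem~\ref{thm-Nguyen-formula-for-local-symbols-K_P} together with the general reciprocity law of Theorem~\ref{thm-General-Reciprocity-Law}. Throughout, Remark~\ref{rem-fixed-notation-about-local-symbols} keeps us in the case $n = 2$, so $q_s$ is odd for $\cD$-almost all $s \in S$ and $(\kappa - 1)/2 \in \bZ^{\#}$.

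For the reduction step, Proposition~\ref{prop-basic-properties-local-symbols}(i)--(ii) shows that $\prod_P (\alpha, \beta)_P$ is bi-multiplicative in $\alpha, \beta \in \bK^\times$, and every element of $\bK^\times$ is a product (with $\bZ$-exponents) of a constant in $\GF(\kappa)^\times$ and monic irreducibles in $\bP$. Hence it suffices to treat the cases where each of $\alpha, \beta$ is either a constant or a monic irreducible, together with the diagonal case $\alpha = \beta$ monic irreducible. Moreover, Theorem~\ref{thm-Nguyen-formula-for-local-symbols-K_P} immediately gives $(\alpha, \beta)_P = 1$ whenever $v_P(\alpha) = v_P(\beta) = 0$, so the global product is indeed finite. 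The case of two constants is then immediate. For $\alpha = c \in \GF(\kappa)^\times$ and $\beta = P$ monic irreducible of degree $d$, the only potentially nontrivial factors are at $P$ and at $\infty$, and Theorem~\ref{thm-Nguyen-formula-for-local-symbols-K_P} combined with Remark~\ref{rem-residue-symbol-equal-reduction-to-a-power} yields
\begin{align*}
(c, P)_P \cdot (c, P)_\infty = c^{(\kappa^d - 1)/2} \cdot c^{-d(\kappa - 1)/2} = c^{((\kappa - 1)/2)\sum_{i=1}^{d-1}(\kappa^i - 1)} = 1,
\end{align*}
since each $\kappa^i - 1$ is divisible by $\kappa - 1 = |\GF(\kappa)^\times|$. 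The diagonal case $\alpha = \beta = P$ reduces to the previous one via Proposition~\ref{prop-basic-properties-local-symbols}(iv): from $(P, -P)_Q = 1$ and bi-multiplicativity one deduces $(P, P)_Q = (P, -1)_Q$, hence $\prod_Q (P, P)_Q = \prod_Q (P, -1)_Q = 1$.

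The main obstacle is the case of two distinct monic irreducibles $P_1, P_2$ of degrees $d_1, d_2$, where a sign at $\infty$ must exactly cancel a sign coming from the general reciprocity law. Theorem~\ref{thm-Nguyen-formula-for-local-symbols-K_P}, together with the observation that $P_i / t^{d_i}$ is a $v_\infty$-unit reducing to $1$, yields
\begin{align*}
(P_1, P_2)_{P_1} = \left(\frac{P_2}{P_1}\right), \qquad (P_1, P_2)_{P_2} = \left(\frac{P_1}{P_2}\right), \qquad (P_1, P_2)_\infty = (-1)^{d_1 d_2 (\kappa - 1)/2},
\end{align*}
and all other local symbols are trivial. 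Applying Theorem~\ref{thm-General-Reciprocity-Law} with $n = 2$ and using $\sign_2(P_i) = 1$ (since $P_i$ is monic), I obtain
\begin{align*}
\left(\frac{P_1}{P_2}\right)\left(\frac{P_2}{P_1}\right) = (-1)^{d_1 d_2 (\kappa - 1)/2},
\end{align*}
and multiplying this with the factor at $\infty$ gives $1$, completing the proof. The delicate bookkeeping is that the exponent $(\kappa - 1)/2$ appearing in the sign at $\infty$ (coming from the residue field at $\infty$ being $\GF(\kappa)$ itself) is \emph{precisely} the exponent appearing in the sign of Theorem~\ref{thm-General-Reciprocity-Law}; this coincidence is exactly what makes the analogue of the Hilbert reciprocity law hold over the ultra-finite setting.
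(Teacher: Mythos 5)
Your proposal is correct and follows essentially the same strategy as the paper: reduce via bi-multiplicativity of the local symbols to a short list of cases, then combine the explicit local symbol formula of Theorem~\ref{thm-Nguyen-formula-for-local-symbols-K_P} (including the observation that $\red_\infty\bigl(P_2^{d_1}/P_1^{d_2}\bigr)=1$) with the general reciprocity law of Theorem~\ref{thm-General-Reciprocity-Law}. The genuine point of difference is the decomposition. The paper writes $\alpha = aP$, $\beta = bQ$ as (possibly non-monic) primes and evaluates the local symbols at $P$, $Q$, and $\infty$; in doing so it tacitly assumes $P \ne Q$, so the diagonal case is not explicitly addressed, and the constant factors $a, b$ are absorbed into the $\sign_n$ term of the reciprocity law. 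You instead decompose into $\GF(\kappa)^\times$ and monic irreducibles, treat the constant--constant, constant--monic, and distinct monic--monic cases separately, and handle the diagonal $P_1 = P_2$ via $(P,-P)_Q = 1$ from Proposition~\ref{prop-basic-properties-local-symbols}(iv) and skew-symmetry (v). This explicitly covers the diagonal case the paper's proof glosses over. Your direct verification of the constant--monic case, using the identity $\tfrac{\kappa^d-1}{2} - \tfrac{d(\kappa-1)}{2} = \tfrac{\kappa-1}{2}\sum_{i=1}^{d-1}(\kappa^i-1)$ together with $c^{\kappa-1}=1$ for $c \in \GF(\kappa)^\times$, is essentially a re-derivation of Lemma~\ref{lem-power-residue-symbol-for-constants}(i), so no new input is needed, but it is cleaner to present the cancellation $(c,P)_P = (c,P)_\infty^{-1}$ this way than to route it through $\sign_n$. (One small imprecision: you write $\kappa-1 = \lvert \GF(\kappa)^\times\rvert$; since $\GF(\kappa)^\times$ is uncountable, $\kappa-1$ is its \emph{internal} cardinality, and the fact you actually need is $c^{\kappa-1}=1$, which holds by \L{}o\'s' theorem applied to Fermat's little theorem componentwise.) Also note that in your computation you state $(P_1,P_2)_{P_1} = \bigl(\tfrac{P_2}{P_1}\bigr)$ rather than its inverse, which is harmless only because $n=2$; for general $n$ the inverse is needed, as in the paper's equation (\ref{e-eqn2-Hilbert-reciprocity-law-analogue}). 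Overall the two proofs have the same technical content, with yours being slightly more granular and more careful about the cases.
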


\begin{proof}

Since $\alpha, \beta$ are nonzero elements in $\bK$, we know from Theorem \ref{thm-Nguyen-formula-for-local-symbols-K_P} that $(\alpha, \beta)_P = 1$ for all but finitely many primes $P \in \bP\cup \{\infty\}$. On the other hand, by Proposition \ref{prop-basic-properties-local-symbols}, the local symbol $(\cdot, \cdot)_P$, for a given prime $P$, is multiplicative in both components, and thus it suffices to consider the case where both $\alpha, \beta$ are (not necessarily monic) primes in $\bK$. Write $\alpha = aP$ and $\beta = bQ$ for some monic primes $P, Q$, where $a, b \in \GF(\kappa)^{\times}$ are the leading coefficients of $\alpha, \beta$, respectively.

It is easy to see from Theorem \ref{thm-Nguyen-formula-for-local-symbols-K_P} that if $\fp$ is an arbitrary monic prime in $\bK$ such that $\fp \ne P, Q, \infty$, then $(\alpha, \beta)_{\fp} = 1$ since both $v_{\fp}(\alpha)$ and $v_{\fp}(\beta)$ are zero. Thus
\begin{align}
\label{e-eqn1-Hilbert-reciprocity-law-analogue}
\prod_{\fp \in \bP \cup \{\infty\}}(\alpha, \beta)_{\fp} = (\alpha, \beta)_P(\alpha, \beta)_Q(\alpha, \beta)_{\infty}.
\end{align}

Since $v_P(\alpha) = 1$ and $v_P(\beta) = 0$, we know from (\ref{e-quadratic-residue-symbol-equal-reduction-to-a-power}) and Theorem \ref{thm-Nguyen-formula-for-local-symbols-K_P} that
\begin{align}
\label{e-eqn2-Hilbert-reciprocity-law-analogue}
(\alpha, \beta)_P = \red_P(\beta^{-1})^{\frac{\kappa^{\deg(P)} - 1}{n}} = \left(\dfrac{\beta}{P}\right)_n^{-1} = \left(\dfrac{\beta}{\alpha}\right)_n^{-1}.
\end{align}
Similarly, we can show that
\begin{align}
\label{e-eqn3-Hilbert-reciprocity-law-analogue}
(\alpha, \beta)_Q = \red_Q(\alpha)^{\frac{\kappa^{\deg(Q)} - 1}{n}} = \left(\dfrac{\alpha}{Q}\right)_n= \left(\dfrac{\alpha}{\beta}\right)_n.
\end{align}

Let
\begin{align*}
P = t^{\deg(P)} + \sum_{i = 0}^{\deg(P)- 1}p_it^i,
\end{align*}
and
\begin{align*}
Q = t^{\deg(Q)} + \sum_{i = 0}^{\deg(Q)- 1}q_it^i.
\end{align*}
In dividing $\frac{Q^{\deg(P)}}{P^{\deg(Q)}}$ by $t^{\deg(P)\deg(Q)}$, we obtain
\begin{align*}
\dfrac{Q^{\deg(P)}}{P^{\deg(Q)}} = \dfrac{\left(1 + \sum_{i = 0}^{\deg(Q) - 1}q_i(1/t)^{\deg(Q) - i}\right)^{\deg(P)}}{\left(1 + \sum_{i = 0}^{\deg(P) - 1}p_i(1/t)^{\deg(P) - i}\right)^{\deg(Q)}} \equiv 1 \pmod{\infty},
\end{align*}
and thus
\begin{align}
\label{e-eqn4-Hilbert-reciprocity-law}
\red_{\infty}\left(\dfrac{Q^{\deg(P)}}{P^{\deg(Q)}}\right) = 1.
\end{align}

Since $v_\infty(\alpha) = -\deg(\alpha) = -\deg(P)$, $v_{\infty}(\beta) = -\deg(\beta) = -\deg(Q)$, and the residue field of $\bK_{\infty}$ is $\GF(\kappa)$, Theorem \ref{thm-Nguyen-formula-for-local-symbols-K_P} implies that
\begin{align*}
(\alpha, \beta)_{\infty} &= (-1)^{\deg(\alpha)\deg(\beta)\frac{\kappa - 1}{n}}\left(\red_{\infty}\left(\dfrac{\alpha^{v_{\infty}(\beta)}}{\beta^{v_{\infty}(\alpha)}}\right)\right)^{\frac{\kappa - 1}{n}} \\
&= (-1)^{\deg(\alpha)\deg(\beta)\frac{\kappa - 1}{n}}\left(\red_{\infty}\left(\dfrac{a^{-\deg(Q)}P^{-\deg(Q)}}{b^{-\deg(P)}Q^{-\deg(P)}}\right)\right)^{\frac{\kappa - 1}{n}} \\
&= (-1)^{\deg(\alpha)\deg(\beta)\frac{\kappa - 1}{n}}\red_{\infty}\left(\dfrac{a^{-\deg(Q)}}{b^{-\deg(P)}}\right)^{\frac{\kappa - 1}{n}} \; \; (\;by \; (\ref{e-eqn4-Hilbert-reciprocity-law})). \; \\
\end{align*}

Since $a, b$ are nonzero elements in $\GF(\kappa)$, we know from (\ref{def-signature-of-kappa-1/n-ultra-power}) that
\begin{align*}
\red_{\infty}\left(\dfrac{a^{-\deg(Q)}}{b^{-\deg(P)}}\right)^{\frac{\kappa - 1}{n}} = \sign_n(\alpha)^{-\deg(\beta)}\sign_n(\beta)^{\deg(\alpha)},
\end{align*}
and therefore
\begin{align}
\label{e-eqn5-Hilbert-reciprocity-law}
(\alpha, \beta)_{\infty} = (-1)^{\deg(\alpha)\deg(\beta)\frac{\kappa - 1}{n}}\sign_n(\alpha)^{-\deg(\beta)}\sign_n(\beta)^{\deg(\alpha)}.
\end{align}

By (\ref{e-eqn1-Hilbert-reciprocity-law-analogue}), (\ref{e-eqn2-Hilbert-reciprocity-law-analogue}), (\ref{e-eqn3-Hilbert-reciprocity-law-analogue}), (\ref{e-eqn5-Hilbert-reciprocity-law}), and the general reciprocity law \ref{thm-General-Reciprocity-Law}, we deduce that
\begin{align*}
\prod_{\fp \in \bP \cup \{\infty\}}(\alpha, \beta)_{\fp} = \left(\dfrac{\beta}{\alpha}\right)_n^{-1}\left(\dfrac{\alpha}{\beta}\right)_n(-1)^{\deg(\alpha)\deg(\beta)\frac{\kappa - 1}{n}}\sign_n(\alpha)^{-\deg(\beta)}\sign_n(\beta)^{\deg(\alpha)} = 1,
\end{align*}
which proves the theorem.

\end{proof}

When $n = 2$, the definition of $\kappa$ trivially implies that $2$ divides $\kappa - 1$, and thus $2$ divides $\kappa^{\deg(P)} - 1$. So the above theorem implies the following formula for local symbols corresponding to $n = 2$ over the $P$-adic completion $\bK_P$.
\begin{corollary}
\label{cor-Nguyen-formula-for-local-symbols-K_P-quadratic-symbol}

Let $\alpha$ and $\beta$ be elements in $\bK_P^{\times}$. Then
\begin{itemize}

\item [(i)] if $P$ is a monic irreducible polynomial in $\bA$ of positive degree, then
\begin{align*}
(\alpha, \beta)_P = \left((-1)^{v_P(\alpha)v_P(\beta)} \red_P\left(\dfrac{\alpha^{v_P(\beta)}}{\beta^{v_P(\alpha)}} \right)\right)^{\frac{\kappa^{\deg(P)} - 1}{2}}.
\end{align*}

\item [(ii)] if $P$ is the infinite prime $\infty = 1/t$, then
\begin{align*}
(\alpha, \beta)_{\infty} = \left((-1)^{v_{\infty}(\alpha)v_{\infty}(\beta)} \red_{\infty}\left(\dfrac{\alpha^{v_{\infty}(\beta)}}{\beta^{v_{\infty}(\alpha)}} \right)\right)^{\frac{\kappa - 1}{2}}.
\end{align*}

\end{itemize}

\end{corollary}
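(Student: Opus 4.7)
The plan is to obtain the corollary as an immediate consequence of Theorem \ref{thm-Nguyen-formula-for-local-symbols-K_P} by setting $n = 2$. The only preliminary work is to verify that the hypothesis on $n$-th roots of unity, which Theorem \ref{thm-Nguyen-formula-for-local-symbols-K_P} inherits through Corollary \ref{cor-computation-of-local-symbols-nguyen}, holds for $n = 2$: the residue field at the relevant prime must contain $\mu_2 = \{\pm 1\}$. By Nguyen's Example 4.10 cited in Subsection \ref{subsec-local-symbols-when-k-is-an-ultra-finite-field}, this is equivalent to $2 \mid \kappa^{\deg(P)} - 1$ in $\bZ^{\#}$ for part (i) (where the residue field is $\GF(\kappa^{\deg(P)})$) and $2 \mid \kappa - 1$ in $\bZ^{\#}$ for part (ii) (where the residue field is $\GF(\kappa)$).

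For the second divisibility, I would argue as follows. Each $q_s$ is a power of a prime $p_s$, and since we are working with quadratic symbols the characteristic of $\GF(\kappa)$ is assumed not to be $2$; thus $p_s \ne 2$ for $\cD$-almost all $s \in S$, which forces $q_s$ to be odd and $q_s - 1$ to be even for $\cD$-almost all $s$. By the definition of divisibility in $\bZ^{\#}$, this yields $2 \mid \kappa - 1$. For the first divisibility, I would invoke the standard factorization
\begin{align*}
\kappa^{\deg(P)} - 1 = (\kappa - 1)\bigl(\kappa^{\deg(P)-1} + \cdots + \kappa + 1\bigr)
\end{align*}
in $\bZ^{\#}$, which transfers componentwise from the analogous identity in $\bZ$ via \L{}o\'s' Theorem \ref{thm-Los}. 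Combined with $2 \mid \kappa - 1$, this gives $2 \mid \kappa^{\deg(P)} - 1$.

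With the hypotheses checked, substituting $n = 2$ directly into the two formulas of Theorem \ref{thm-Nguyen-formula-for-local-symbols-K_P} yields exactly the two displayed equations of the corollary: in part (i) the residue field at the finite prime $P$ is $\GF(\kappa^{\deg(P)})$, producing the hyperinteger exponent $(\kappa^{\deg(P)} - 1)/2$, while in part (ii) the residue field at $\infty$ is $\GF(\kappa)$, producing the exponent $(\kappa - 1)/2$. Both exponents are well defined as hyperintegers in $\bZ^{\#}$ by the divisibilities just established, and the ultrapower action of $\bZ^{\#}$ on the multiplicative groups of ultra-finite residue fields recalled in Subsection \ref{subsec-action-of-hyperintegers-on-ultraproducts} interprets the resulting powers unambiguously.

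I do not expect a real obstacle here; the argument is essentially clerical once one is comfortable with arithmetic in $\bZ^{\#}$. The only point worth flagging (but not belaboring) is the implicit identification $\kappa^{\deg(P)} - 1 = \ulim_{s \in S}(q_s^{\deg(P)} - 1)$, which one needs in order to read the divisibility and the factorization above off componentwise; this is a direct consequence of the definition of the ultrapower action on $\GF(\kappa)^{\times}$ and of \L{}o\'s' theorem applied to the polynomial identity $x^d - 1 = (x-1)(x^{d-1} + \cdots + 1)$.
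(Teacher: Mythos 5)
Your proposal is correct and follows essentially the same route as the paper, which likewise obtains the corollary by substituting $n = 2$ into Theorem \ref{thm-Nguyen-formula-for-local-symbols-K_P} after observing that $2 \mid \kappa - 1$ and hence $2 \mid \kappa^{\deg(P)} - 1$. You merely spell out the divisibility check (odd characteristic $\Rightarrow q_s - 1$ even, and the componentwise factorization $\kappa^d - 1 = (\kappa-1)(\kappa^{d-1}+\cdots+1)$ via \L{}o\'s) that the paper compresses into the single remark preceding the corollary.
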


\subsection{Description of $\Delta_{a, b}$}
\label{subsec-Delta-a-b}

In this subsection, we describe the sets $\Delta_{a, b}$ in \ref{itm: N5} in more detail.

For a prime $P \in \bP \cup \{\infty\}$, define
\begin{align*}
h =
\begin{cases}
\deg(P) \; \; &\text{if $P$ is a non-infinite prime in $\bP$},\\
1 \; \; &\text{if $P =\infty$}.
\end{cases}
\end{align*}

By Proposition \ref{prop-basic-properties-local-symbols}(iii), $P \in \Delta_{a, b}$ if and only if $(a, b)_P = -1$ if and only if the following is true.
\begin{align}
\label{e-eqn1-in-main-thm1}
\left((-1)^{v_P(a)v_P(b)} \red_P\left(\dfrac{(a)^{v_P(b)}}{(b)^{v_P(a))}}\right)\right)^{\frac{\kappa^h - 1}{2}} = -1 \; \; (\text{by Corollary \ref{cor-Nguyen-formula-for-local-symbols-K_P-quadratic-symbol}}).
\end{align}

Suppose that $P \not\in \bP(a) \cup \bP(b)$.  (see notation \ref{itm: N4}). Then both $v_P(a)$ and $v_P(b)$ are even, and thus both $v_P(a)/2$ are $v_P(b)/2$ are integers. Setting $\alpha = \dfrac{a^{v_P(b)/2}}{b^{v_P(a)/2}}$, we see that $\alpha \in \bK$. Since $v_P(\alpha) = 0$, $\alpha$ is a unit in $\bA_P$, and thus $\red_P(\alpha) \in \GF(\kappa)_P^{\times}$. By Nguyen \cite[Theorem 3.35]{nguyen-ultrafinite-fields-2023}, $\GF(\kappa)_P = \bA/P\bA$ is a unique extension of degree $h$ over $\GF(\kappa)$, and can be written in the form
\begin{align*}
\GF(\kappa)_P = \GF(\kappa^h) = \prod_{s\in S}\bF_{q_s^h}/\cD,
\end{align*}
and thus $\red_P(\alpha) = \ulim_{s\in S}\alpha_s \in \GF(\kappa^h)^{\times}$ for some elements $\alpha_s \in \bF_{q_s^h}^{\times}$.

By the theory of finite fields, $\alpha_s^{q_s^h - 1} = 1$, and thus
\begin{align}
\label{e-eqn2-in-main-thm1}
\red_P(\alpha)^{\kappa^h - 1} = \ulim_{s\in S}\alpha_s^{q_s^h - 1} = 1.
\end{align}

On the other hand, since both $v_P(a)$ and $v_P(b)$ are even, we know from (\ref{e-eqn1-in-main-thm1}) that
\begin{align*}
\red_P\left(\dfrac{(a)^{v_P(b)}}{(b)^{v_P(a))}}\right)^{\frac{\kappa^{\deg(P)} - 1}{2}} = \red_P(\alpha^2)^{\frac{\kappa^h - 1}{2}} = \red_P(\alpha)^{\kappa^h - 1} = -1,
\end{align*}
which is a contradiction to (\ref{e-eqn2-in-main-thm1}). Thus $P$ must belong in $\bP(a) \cup \bP(b)$, and thus
\begin{align*}
\Delta_{a, b}\cap (\bP(a) \cup \bP(b)) = \Delta_{a, b}.
\end{align*}

We summarize the above discussion in the following.

\begin{proposition}
\label{pro-description-of-Delta-a-b}

For any elements $a, b \in \bK^{\times}$,
\begin{align*}
\Delta_{a, b}\cap (\bP(a) \cup \bP(b)) = \Delta_{a, b}.
\end{align*}
In particular, $\Delta_{a, b}$ is a finite set.

\end{proposition}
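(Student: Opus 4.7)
The plan is to prove the inclusion $\Delta_{a, b} \subseteq \bP(a) \cup \bP(b)$, since the reverse inclusion is trivial and this gives the desired equality. I would argue by contrapositive: fix any prime $P \in \bP \cup \{\infty\}$ with $P \notin \bP(a) \cup \bP(b)$, so by the definition in \ref{itm: N4} both $v_P(a)$ and $v_P(b)$ are even integers, and then show that the quaternion algebra $\bH_{a,b}$ splits at $P$, equivalently that $(a,b)_P = 1$ by Remark \ref{rem-fixed-notation-about-local-symbols}.

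The main step is a direct application of Corollary \ref{cor-Nguyen-formula-for-local-symbols-K_P-quadratic-symbol}. Since $v_P(a) v_P(b)$ is even, the sign $(-1)^{v_P(a) v_P(b)}$ equals $1$, and the formula collapses to
\begin{align*}
(a,b)_P = \red_P\left(\dfrac{a^{v_P(b)}}{b^{v_P(a)}}\right)^{(\kappa^h - 1)/2},
\end{align*}
where $h = \deg(P)$ when $P \in \bP$ and $h = 1$ when $P = \infty$. Set $\alpha := a^{v_P(b)/2}/b^{v_P(a)/2} \in \bK^{\times}$, which makes sense precisely because both valuations are even. Then $v_P(\alpha) = 0$, so $\red_P(\alpha) \in \GF(\kappa^h)^{\times}$, and the formula becomes $\red_P(\alpha)^{\kappa^h - 1}$. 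Writing $\red_P(\alpha) = \ulim_{s \in S} \alpha_s$ with $\alpha_s \in \bF_{q_s^h}^{\times}$, Lagrange's theorem gives $\alpha_s^{q_s^h - 1} = 1$ for each $s$, and \L{}o\'s' theorem (Theorem \ref{thm-Los}) yields $\red_P(\alpha)^{\kappa^h - 1} = 1$. Hence $(a, b)_P = 1$ and $P \notin \Delta_{a,b}$, which establishes the inclusion.

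For the finiteness assertion, I note that any nonzero element of $\bK$ has only finitely many primes of nonzero valuation: writing $a = f/g$ with $f, g \in \bA$ coprime, the finite primes contributing to $\bP(a)$ lie among the monic irreducible factors of $fg$, and to these one only adjoins the possibility of $\infty$. Thus $\bP(a) \cup \bP(b)$ is finite, and the inclusion $\Delta_{a,b} \subseteq \bP(a) \cup \bP(b)$ forces $\Delta_{a,b}$ to be finite as well. There is no substantial obstacle here; the argument is essentially a single calculation based on the explicit local symbol formula, and the only subtlety is ensuring the infinite prime is treated correctly, which is absorbed into the uniform formula by taking $h = 1$.
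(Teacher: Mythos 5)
Your proof is correct and follows essentially the same route as the paper: both reduce to showing $\Delta_{a,b}\subseteq\bP(a)\cup\bP(b)$ by taking a prime with even valuations, passing to $\alpha=a^{v_P(b)/2}/b^{v_P(a)/2}$ of valuation zero, and using \L{}o\'s' theorem to conclude $\red_P(\alpha)^{\kappa^h-1}=1$, hence $(a,b)_P=1$. The only cosmetic difference is that you argue by contrapositive while the paper phrases it as a contradiction; the finiteness observation is also identical.
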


\begin{proof}

The first assertion is immediate from the discussion preceding the proposition. For the last assertion, we know that if a prime $P \in \bP \cup \{\infty\}$ belongs in $\Delta_{a, b}$, then $P$ belongs in $\bP(a) \cup \bP(b) $. Both $\bP(a)$ and $\bP(b)$ are finite sets, and thus $\Delta_{a, b}$ is a finite set.

\end{proof}

\subsection{Defining $\GF(\kappa)$ in $\bK = \GF(\kappa)(t)$.}
\label{subsec-koe-2002-defining-transcendentals}

Recall from Pop \cite{pop-1996} that a field $F$ is \textbf{large} if any $F$-variety of dimension at least $1$ with one smooth $F$-point has infinitely many $F$-rational points. Standard examples of large fields include algebraically closed fields, henselian valued fields, and pseudo-algebraically closed (PAC) fields (see \cite{FJ}). It is known that ultra-finite fields are PAC (see Ax \cite{ax-1968}), and thus $\GF(\kappa)$ is large.

The following theorem is due to Koenigsmann \cite{koe-2002}.

\begin{theorem}
\label{thm-Koenigsmann-2002}
(Koenigsmann \cite[Theorem 2]{koe-2002})

Let $F$ be a large field, and let $K/F$ be a function field in one variable. Then $F$ is definable in $K$ by an existential parameter-free formula in the language of fields.

\end{theorem}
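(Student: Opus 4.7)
The plan is to exhibit, for each candidate $a \in K$, an existential parameter-free formula $\varphi(x)$ in the language of fields such that $K \models \varphi(a)$ if and only if $a \in F$. The guiding principle is that ``$a$ is a constant'' should be captured diophantinely by the solvability in $K$ of a polynomial system in auxiliary unknowns. Two opposing facts must be balanced: the system must have a $K$-solution whenever $a \in F$, for which largeness of $F$ is the main tool; and it must have no $K$-solution when $a$ is non-constant on the curve $K/F$, for which the divisor theory on the smooth projective model of $K/F$ is the main tool.

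First, I would introduce a test $F$-variety $V_a$ that depends polynomially on $a$, is geometrically integral, and admits a visible smooth $F$-point for every $a \in F$. A natural candidate is built from a Pfister or quaternionic norm form; for instance one can try
\begin{equation*}
V_a \colon \ y_1^2 - a y_2^2 - (1 - a)\bigl(y_3^2 - a y_4^2\bigr) = 1,
\end{equation*}
or a small modification thereof. For $a \in F$ the point $(1,0,0,0)$ lies on $V_a$ and is smooth, so by largeness $V_a(F)$ is infinite; this gives in particular a $K$-point, verifying the easy direction ``$a \in F \Rightarrow \varphi(a)$''. The existence of a $K$-point is plainly an existential condition in $a$, giving the shape of $\varphi$.

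Second, and this is where the real work lies, I would prove the converse: if $a \in K \setminus F$, then $V_a(K) = \emptyset$. Here I would localize at places of $K/F$. If $a$ has a pole at some place $P$, then evaluating $v_P$ on a putative solution of the equation cutting out $V_a$ yields a local obstruction of Pfister/quaternion type, provable by a local-symbol computation in the spirit of Corollary \ref{cor-Nguyen-formula-for-local-symbols-K_P-quadratic-symbol}: the left-hand side has a predictable parity of valuation while the right-hand side has valuation $0$. If the first choice of $V_a$ does not rule out all bad residues at once, I would augment the existential formula with further quadratic conditions (and perhaps a disjunction over a bounded list of combinatorial types of ramification), following the pattern of Koenigsmann's construction.

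The principal obstacle is this converse direction. Two subtleties complicate it. First, the constant field of $K/F$ may be a proper finite extension $F'/F$, so the formula must distinguish $F$ from $F'$; this is arranged by building $V_a$ from polynomials with coefficients in $\bZ$ and appealing to largeness of $F$ rather than of $F'$. Second, when $\mathrm{char}(F) = 2$ the quadratic forms above degenerate, so a parallel construction using Artin--Schreier extensions, or an $\ell$-power Pfister form for an odd prime $\ell$, is needed in parallel; combining them by a disjunction keeps the formula existential and parameter-free. The final formula is then ``$\exists y_1, \dots, y_m : E(x; y_1, \dots, y_m) = 0$'' where $E$ is a single polynomial identity (or a short disjunction of such) chosen so that $\varphi(a)$ isolates precisely the subring of global constants, namely $F$ itself.
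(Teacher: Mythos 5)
The paper does not give a proof of this theorem: it cites Koenigsmann's original argument, and in Subsection \ref{subsec-koe-2002-defining-transcendentals} it records the formula Koenigsmann actually uses, namely a plane curve $\cC : f(X,Y)=0$ of genus $>0$ over the \emph{prime field} with a regular rational point $(a,b)$, and
\begin{equation*}
\psi(x) :\quad \exists x_1,x_2,y_1,y_2\ \bigl(x_1 = x x_2 \land x_2\neq 0 \land f(a+x_1,y_1)=0 \land f(a+x_2,y_2)=0\bigr).
\end{equation*}
Your quadratic-form approach is genuinely different, and as written it cannot work.

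Concretely, the test variety you propose, $y_1^2 - a y_2^2 - (1-a)(y_3^2 - a y_4^2)=1$, is the norm-one hypersurface of the quaternion algebra $(a,1-a)$, which splits for every $a$ by the Steinberg relation; indeed $(1,0,0,0)$ is a point of $V_a$ for \emph{every} $a$, not just constants, so the putative formula is satisfied by all of $K$. Your hedge ``or a small modification thereof'' does not help, because the obstruction is structural rather than a matter of choosing coefficients: algebraically closed fields are large, and if $F$ is algebraically closed then Tsen's theorem gives $\Br(K)=0$ for any function field $K/F$ in one variable, so every quaternion algebra over $K$ splits and every Pfister form over $K$ is isotropic. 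Hence no existential condition built from quadratic forms or quaternion algebras in the free variable $a$ can separate constants from non-constants for such $F$, and the direction you defer (``this is where the real work lies'') is exactly where the approach collapses in the most basic case. Koenigsmann's device of a positive-genus curve is what replaces this: the hard direction is a geometric constraint on nonconstant morphisms to a curve of positive genus, not an arithmetic obstruction coming from local symbols. In short, the easy direction you verify is trivially true because your $V_a$ always has the tautological point, and the hard direction is provably false in full generality for any variant built from quadratic forms; you need to change the construction, not merely the coefficients.
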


The above theorem immediately implies the following.

\begin{corollary}
\label{cor-Koe-2002}

$\GF(\kappa)$ is definable in $\bK = \GF(\kappa)(t)$ by an existential parameter--free formula in the language of fields.

\end{corollary}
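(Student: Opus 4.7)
The plan is to deduce Corollary 4.6 as a direct specialization of Koenigsmann's Theorem 4.5. The statement of Theorem 4.5 requires two hypotheses on the pair $(F, K)$: the base field $F$ must be large in the sense of Pop, and $K/F$ must be a function field in one variable. The task, therefore, reduces to verifying both hypotheses for the pair $(\GF(\kappa), \bK)$, where $\bK = \GF(\kappa)(t)$.

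First, I would observe that $\bK = \GF(\kappa)(t)$ is, by construction, the rational function field in the single indeterminate $t$ over $\GF(\kappa)$; this is the prototypical example of a function field in one variable over $\GF(\kappa)$, so the second hypothesis of Theorem 4.5 is met with no further argument.

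For the first hypothesis, I would invoke the two facts already recorded in the text just before the statement of Theorem 4.5. By the work of Ax \cite{ax-1968}, every ultra-finite field is pseudo-algebraically closed (PAC), so in particular $\GF(\kappa) = \prod_{s \in S} \bF_{q_s}/\cD$ is PAC. Then, appealing to the standard fact that PAC fields are large (listed among the standard examples of large fields in the paragraph preceding Theorem 4.5), I conclude that $\GF(\kappa)$ is large.

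With both hypotheses verified, Theorem 4.5 applies directly to the pair $(\GF(\kappa), \bK)$ and yields an existential parameter-free formula $\varphi(x)$ in $\cL_{\mathrm{rings}}$ defining $\GF(\kappa)$ inside $\bK$, which is exactly the content of Corollary 4.6. There is really no obstacle of substance here: the deep work is encapsulated in Theorem 4.5 itself and in Ax's theorem on PAC-ness of ultraproducts of finite fields. The entire contribution of this corollary is to record that the hypotheses of Koenigsmann's theorem are satisfied in the present setting, so the proof is a one-line deduction.
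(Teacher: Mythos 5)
Your proof is correct and matches the paper's approach exactly: the paper records that $\GF(\kappa)$ is PAC (hence large) in the paragraph preceding Theorem~\ref{thm-Koenigsmann-2002} and then states that the corollary follows immediately, which is precisely the one-line deduction you spell out.
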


We recall the formula from Koenigsmann \cite{koe-2002} that defines $\GF(\kappa)$ in $\bK$. Let $\bF$ be the prime field of $\GF(\kappa)$, so $\bF$ is either the rationals $\bQ$ or a finite field $\bF_p$ for some prime number $p >0$. Let $f \in \bF[X, Y]$ define a plane curve $\cC$ over $\bF$ of genus $> 0$ that has a regular $\bF$-rational point, say $(a, b) \in \cC(\bF)$ such that $\frac{df}{dY}(a, b) \ne 0$. We denote by $\psi(x)$ the formula defined by
\begin{align}
\label{e-psi(x)-formula-in-koe-2002}
\exists x_1, x_2, y_1, y_2 \in \bK\; :\; x_1 = xx_2 \land x_2 \ne 0 \land f(a + x_1, y_1) = 0 \land f(a + x_2, y_2) = 0.
\end{align}
Then Koenigsmann \cite{koe-2002} proves that for any $x \in \bK$,
\begin{align}
\label{e-koe-formula-2002}
x\in \GF(\kappa) \Leftrightarrow \psi(x).
\end{align}

\subsection{The Jacobson radical $\cJ\left(\cT_{a, b}\right)$}
\label{subsec-Jacobson-radical-of-R_{a,b}}

In addition to notations \ref{itm: N1}--\ref{itm: N6}, we further fix the following notation for the rest of the paper.

\begin{description}[style=multiline, labelwidth=1.5cm]

\item [\namedlabel{itm: N7}{(N7)}] For any $a, b \in \bK^{\times}$, let
\begin{align*}
\cS_{a, b} = \{2x_1 \in \bK \; | \; \text{$\exists x_2, x_3, x_4 \in \bK$ such that $x_1^2 - ax_2^2 - bx_3^2 + abx_4^2 = 1$}  \}
\end{align*}
be the set of traces of norm-$1$ elements of the quaternion algebra $\bH_{a, b}$.

If $P$ is a prime in $\bP \cup \{\infty\}$, we will similarly denote by $\cS_{a, b}(\bK_P)$ the set of traces of norm-$1$ elements of the quaternion algebra $\bH_{a, b} \otimes \bK_P$, i.e., replacing $\bK$ by $\bK_P$ in the definition of $\cS_{a, b}$.

\item [\namedlabel{itm: N8}{(N8)}] $\cT_{a, b} := \cS_{a, b} + \cS_{a, b}$.

\item [\namedlabel{itm: N9}{(N9)}] for each non-infinite prime $P \in \bP$, define
\begin{align*}
\cU_P = \{\epsilon \in \GF(\kappa)_P \cong \bA/P\bA \; |\; \text{the polynomial $x^2 - \epsilon x + 1$ is irreducible over $\GF(\kappa)_P$}\}.
\end{align*}

Note that using Nguyen \cite[Theorem 3.35]{nguyen-ultrafinite-fields-2023}, $\GF(\kappa)_P$ is a unique extension $\GF(\kappa^{\deg(P)})$ of degree $\deg(P)$ over $\GF(\kappa)$ that can be written in the form $\prod_{s \in S}\bF_{q_s^{\deg(P)}}/\cD$.

Similarly, we define
\begin{align*}
\cU_{\infty} = \{\epsilon \in \GF(\kappa)_{\infty} = \GF(\kappa)  \; |\; \text{the polynomial $x^2 - \epsilon x + 1$ is irreducible over $\GF(\kappa)$}\}.
\end{align*}

\end{description}

For a prime power $q$ in $\bZ$, set
\begin{align*}
U_q = \{s \in \bF_q \; | \; \text{$x^2 - sx + 1$ is irreducible in $\bF_q[x]$}\}.
\end{align*}

The following result is due to Poonen \cite[Lemma 2.3]{poonen-2009}.
\begin{lemma}
\label{lem-the-set-U-q-poonen-lemma}

If $q$ is a prime power strictly greater than $11$, then $U_q + U_q = \bF_q$.

\end{lemma}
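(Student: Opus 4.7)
The plan is to prove the lemma by Weil-type character sum estimates, handling $q$ odd and $q$ even separately and then dispatching the few prime powers just above the threshold by direct verification.

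I would first treat the case $q$ odd. The polynomial $x^2 - sx + 1$ has discriminant $s^2 - 4$, so $s \in U_q$ iff $s^2 - 4$ is a nonzero non-square in $\bF_q$. Let $\chi$ denote the quadratic character on $\bF_q$, extended by $\chi(0) = 0$. For fixed $c \in \bF_q$, the number of representations $c = s + t$ with $s, t \in U_q$ is $N(c) = \sum_{s \in \bF_q} \mathbf{1}_{s \in U_q}\mathbf{1}_{c-s \in U_q}$. Writing each indicator as $(1 - \chi(s^2 - 4))/2$ minus a correction at its zeros, expanding the product, and using the elementary identity $\sum_s \chi((s-\alpha)(s-\beta)) = -1$ for $\alpha \ne \beta$, the main term is $q/4$ and the only substantive error is the cross sum $W(c) = \sum_s \chi\bigl((s^2 - 4)((c-s)^2 - 4)\bigr)$ against a degree-$4$ polynomial with roots $\pm 2$ and $c \pm 2$. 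When these roots are all distinct, equivalently $c \notin \{0, \pm 4\}$, Weil's bound yields $|W(c)| \le 3\sqrt{q}$, giving $N(c) \ge q/4 - \tfrac{3}{4}\sqrt{q} - O(1)$, which is positive once $q$ is sufficiently large. The exceptional values $c \in \{0, \pm 4\}$ are favourable: at $c = 0$ the quartic becomes the perfect square $(s^2 - 4)^2$, and at $c = \pm 4$ it factors as $(s \mp 2)^2 (s \pm 2)(s \mp 6)$, so in both subcases the cross sum is still controlled by the same identity.

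For $q = 2^k$ even I would invoke Artin--Schreier theory. Since $-s = s$, the substitution $x = sy$ transforms $x^2 + sx + 1 = 0$ into $y^2 + y = s^{-2}$, which is solvable over $\bF_q$ iff $\mathrm{Tr}_{\bF_q/\bF_2}(s^{-2}) = 0$, equivalently (because the trace commutes with squaring over $\bF_2$) iff $\mathrm{Tr}(1/s) = 0$. Hence $U_q = \{s \in \bF_q^{\times} : \mathrm{Tr}(1/s) = 1\}$ and $|U_q| = q/2$. With the additive character $\psi(x) = (-1)^{\mathrm{Tr}(x)}$ and the indicator $\mathbf{1}_{\mathrm{Tr}(x) = 1} = (1 - \psi(x))/2$, an identical expansion produces a Kloosterman-type sum $\sum_{s \ne 0, c} \psi\bigl(s^{-1} + (c - s)^{-1}\bigr)$, whose Weil bound of $O(\sqrt{q})$ again yields $N(c) > 0$ for $q$ sufficiently large.

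The main obstacle I anticipate is not the asymptotic analysis but the sharpness of the bound $q > 11$: the character-sum estimate becomes decisive only once $q$ is moderately large, while the threshold $11$ is (known to be) tight, so a purely asymptotic argument will leave a residual gap. I would close it by direct enumeration for the finitely many small prime powers just above the threshold, i.e.\ $q \in \{13, 16, 17, 19, 23, 25, 27, \ldots\}$ up to where the Weil bound takes over. For instance, when $q = 13$ one computes $U_{13} = \{3, 5, 6, 7, 8, 10\}$ directly from the discriminant criterion and checks by inspection that $U_{13} + U_{13} = \bF_{13}$; the remaining small prime powers (including the first even case $q = 16$, via the trace criterion) are handled in the same concrete fashion.
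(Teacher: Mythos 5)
Your argument is correct, but note that the paper itself offers no proof of this lemma: it is stated verbatim with a citation to Poonen (\cite[Lemma~2.3]{poonen-2009}), and Poonen's proof is precisely the Weil-bound character-sum argument you sketch, split into odd $q$ (quadratic character applied to the discriminant $s^2-4$, expansion of the product of indicators, Weil bound on the degree-$4$ quartic sum, separate treatment of the degenerate shifts $c\in\{0,\pm 4\}$) and even $q$ (Artin--Schreier trace criterion and a Kloosterman-type additive sum), followed by direct checking of the small prime powers just above the tight threshold $11$. The details you leave implicit --- pinning down the precise constant in the error term and hence the finite list of $q$ to enumerate, and verifying that the rational function $1/(ct(1-t))$ in the even case is not an Artin--Schreier coboundary so the Weil bound genuinely applies --- are routine, and your sample check that $U_{13}=\{3,5,6,7,8,10\}$ with $U_{13}+U_{13}=\bF_{13}$ is accurate.
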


\begin{lemma}
\label{lem-cU-P-is-the-ultraproduct-of-U-q}

Let $P \in \bP \cup \{\infty\}$ be a prime of $\bA$. Then
\begin{itemize}

\item [(i)] if $P$ is a non-infinite prime of degree $d$ in $\bA$, then $\cU_P = \prod_{s \in S}U_{q_s^d}/\cD$ and $\cU_P + \cU_P = \GF(\kappa^d)$.

\item [(ii)] $\cU_{\infty} = \prod_{s \in S}U_{q_s}/\cD$ and $\cU_{\infty} + \cU_{\infty} = \GF(\kappa)$

\end{itemize}

\end{lemma}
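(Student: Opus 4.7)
The plan is to reduce both parts to componentwise statements via \L{}o\'s' theorem, and then invoke Poonen's Lemma \ref{lem-the-set-U-q-poonen-lemma} on each component to obtain the surjectivity of addition. The unboundedness assumption on the $q_s$ will guarantee that we are in the regime $q_s > 11$ (and hence $q_s^d > 11$) for $\cD$-almost all $s \in S$, which is exactly where Poonen's Lemma applies.

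For part (i), I would first identify $\cU_P$ with an ultraproduct. By Nguyen's description of residue fields cited in \ref{itm: N9}, we have $\GF(\kappa)_P = \GF(\kappa^d) = \prod_{s \in S}\bF_{q_s^d}/\cD$. The defining condition of $\cU_P$, namely ``$x^2 - \epsilon x + 1$ is irreducible over $\GF(\kappa^d)$'', is equivalent to the first-order formula $\varphi(\epsilon) \equiv \forall y\,(y^2 - \epsilon y + 1 \ne 0)$ with parameter $\epsilon$ in the language of rings (since a monic quadratic over a field is reducible iff it has a root). Applying Theorem \ref{thm-Los} (\L{}o\'s' theorem) to $\varphi$, an element $\epsilon = \ulim_{s \in S}\epsilon_s \in \GF(\kappa^d)$ lies in $\cU_P$ if and only if $\epsilon_s \in U_{q_s^d}$ for $\cD$-almost all $s \in S$, which is exactly to say $\cU_P = \prod_{s \in S}U_{q_s^d}/\cD$.

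Next, for $\cU_P + \cU_P = \GF(\kappa^d)$: since $\{q_s\}_{s\in S}$ is unbounded for $\cD$-almost all $s$ and $\cD$ is nonprincipal, the set $\{s \in S \,:\, q_s^d > 11\}$ lies in $\cD$. By Lemma \ref{lem-the-set-U-q-poonen-lemma} (Poonen), $U_{q_s^d} + U_{q_s^d} = \bF_{q_s^d}$ for $\cD$-almost all $s$. Given any $\gamma = \ulim_{s \in S}\gamma_s \in \GF(\kappa^d)$, for such $s$ we can write $\gamma_s = \alpha_s + \beta_s$ with $\alpha_s, \beta_s \in U_{q_s^d}$; setting $\alpha = \ulim_{s}\alpha_s$ and $\beta = \ulim_{s}\beta_s$, the representation $\gamma = \alpha + \beta$ with $\alpha, \beta \in \cU_P$ follows from the ultraproduct description above. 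The reverse inclusion is trivial.

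Part (ii) is handled identically with $d = 1$: the residue field is $\GF(\kappa)_\infty = \GF(\kappa) = \prod_{s \in S}\bF_{q_s}/\cD$, and the same \L{}o\'s/Poonen argument yields both $\cU_\infty = \prod_{s\in S}U_{q_s}/\cD$ and $\cU_\infty + \cU_\infty = \GF(\kappa)$. I do not anticipate a real obstacle here; the only subtle point is verifying that $q_s^d > 11$ holds on a set in $\cD$, which is immediate from the unboundedness of $\{q_s\}$ and the nonprincipality of $\cD$. No new ingredients beyond Łoś's theorem and Poonen's lemma are required.
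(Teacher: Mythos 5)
Your proof is correct and follows essentially the same route as the paper: identify $\cU_P$ with the ultraproduct $\prod_{s\in S}U_{q_s^d}/\cD$ via \L{}o\'s' theorem applied to the (first-order) irreducibility condition, then invoke Poonen's Lemma \ref{lem-the-set-U-q-poonen-lemma} componentwise on the $\cD$-large set where $q_s^d > 11$ to get $\cU_P + \cU_P = \GF(\kappa^d)$, and repeat with $d = 1$ for the infinite prime. The only cosmetic difference is that you make the first-order formula $\forall y\,(y^2-\epsilon y+1\neq 0)$ explicit, whereas the paper just asserts irreducibility is first-order expressible.
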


\begin{proof}

We first prove part (i). By the remark in \ref{itm: N9}, the residue field $\GF(\kappa)_P = \GF(\kappa^d) = \prod_{s \in S}\bF_{q_s^d}/\cD$.

We prove the first assertion of part (i). Let $\epsilon = \ulim_{s \in S}\epsilon_s \in \cU_P$ for some $\epsilon_s \in \bF_{q_s^d}$. Then the polynomial $f(x) = x^2 - \epsilon x + 1$ is irreducible over $\GF(\kappa^d)$. We see that
\begin{align*}
f(x) = \ulim_{s \in S}f_s(x),
\end{align*}
where $f_s(x) = x^2 - \epsilon_s x + 1$ is a polynomial in $\bF_{q_s^d}[x]$ for each $s \in S$. Since irreducibility can be defined using a first-order formula in the language of fields, \L{}o\'s' theorem implies that $f_s(x)$ is irreducible over $\bF_{q_s^d}$ for $\cD$-almost all $s \in S$, and thus $\epsilon_s \in U_{q_s^d}$ for $\cD$-almost all $s \in S$. Therefore
\begin{align*}
\cU_P \subseteq \prod_{s\in S}U_{q_s^d}/\cD.
\end{align*}

The reverse inclusion follows exactly the same arguments as above, and thus the first assertion of part (i) follows immediately.

For the last assertion of part (i), there exists a finite subset $S_0 \subset S$ such that $q_s > 11$ for all $s \in S\setminus S_0$. For such a prime power $q_s$, Lemma \ref{lem-the-set-U-q-poonen-lemma} implies that $U_{q_s^d} + U_{q_s^d} = \bF_{q_s^d}$. Since $S_0$ is a finite set, $S\setminus S_0$ belongs in the ultrafilter $\cD$. Thus applying \L{}o\'s' theorem, we deduce from part (i) that
\begin{align*}
\cU_P + \cU_P = \prod_{s \in S}U_{q_s^d}/\cD + \prod_{s \in S}U_{q_s^d}/\cD = \prod_{s \in S}(U_{q_s^d} + U_{q_s^d})/\cD = \prod_{s\in S}\bF_{q_s^d}/\cD = \GF(\kappa^d),
\end{align*}
which proves the last assertion of part (i).

Part (ii) follows from the same arguments as in the proof of part (i).

\end{proof}

We will need the following three results.

\begin{proposition}
\label{prop-norm-of-elements-in-central-simple-algebra}
(see \cite[Proposition 2.6.3]{GS2017})

Let $k$ be a field, and let $A$ be a central simple $k$-algebra of degree $n$ (or equivalently of rank $n^2$). Let $K$ be a commutative $k$-subalgebra of $A$ that is a field extension of degree $n$ over $k$. Then for any $x \in K$,
\begin{align*}
\text{Norm}_A(x) = \text{Norm}_{K/k}(x)
\end{align*}
and
\begin{align*}
\text{Trace}_A(x) = \text{Trace}_{K/k}(x),
\end{align*}
where $\text{Norm}_A$, $\text{Trace}_A$ are the reduced norm and trace maps of $A$, respectively, and $\text{Norm}_{K/k}$, $\text{Trace}_{K/k}$ are the usual norm and trace maps of the field extension $K/k$.

\end{proposition}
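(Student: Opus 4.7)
The plan is to deduce both equalities from a single polynomial identity: for every $x \in K$,
\[
\mathrm{Prd}_{A, x}(T) = \chi_{K/k, x}(T),
\]
where $\mathrm{Prd}_{A, x}(T) \in k[T]$ is the reduced characteristic polynomial of $x$ viewed in $A$, and $\chi_{K/k, x}(T) \in k[T]$ is the characteristic polynomial of left multiplication by $x$ on $K$ viewed as a $k$-vector space of dimension $n$. Both polynomials are monic of degree $n$, and their coefficients of $T^{n-1}$ and of $T^0$ recover (up to sign) $\mathrm{Trace}_A(x), \mathrm{Norm}_A(x)$ and $\mathrm{Trace}_{K/k}(x), \mathrm{Norm}_{K/k}(x)$ respectively, so the identity yields the proposition at once by comparing coefficients.

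To prove the identity, I would use that $K$ is a maximal commutative subfield of $A$ of dimension $n = \deg(A)$ and therefore splits $A$: there is an isomorphism of $K$-algebras $A \otimes_k K \xrightarrow{\sim} M_n(K)$, and further base change to an algebraic closure $k^{\alg}$ of $k$ yields $A \otimes_k k^{\alg} \cong M_n(k^{\alg})$. By definition, $\mathrm{Prd}_{A, x}(T)$ is the characteristic polynomial of the matrix $M_x \in M_n(k^{\alg})$ that corresponds to $x \otimes 1$; this polynomial a priori lies in $k^{\alg}[T]$, but is independent of the choice of splitting and descends to $k[T]$. The task thus reduces to computing this characteristic polynomial.

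The core step is to identify $M_x$ by understanding how the subalgebra $K \otimes_k k^{\alg}$ sits inside $M_n(k^{\alg})$. When $K/k$ is separable, $K \otimes_k k^{\alg} \cong (k^{\alg})^n$ is a maximal \'etale $k^{\alg}$-subalgebra of $M_n(k^{\alg})$, hence conjugate to the diagonal subalgebra; after such a conjugation, $M_x$ becomes the diagonal matrix with entries $\sigma_1(x), \ldots, \sigma_n(x)$, where $\sigma_1, \ldots, \sigma_n$ are the distinct $k$-embeddings $K \hookrightarrow k^{\alg}$, and so $\det(T \cdot I - M_x) = \prod_i (T - \sigma_i(x)) = \chi_{K/k, x}(T)$. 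The main obstacle would be an inseparable $K/k$, in which case $K \otimes_k k^{\alg}$ is no longer \'etale and the conjugation argument must be replaced by a minimal-polynomial argument on a local Artinian $k^{\alg}$-algebra of length equal to the inseparable degree, forcing the minimal and characteristic polynomials of any generator to agree; this case does not arise in the intended application, however, since the ultra-finite field $\GF(\kappa)$ is perfect, so the relevant subfields appearing below are all separable over their base.
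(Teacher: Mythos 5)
The paper does not include a proof of this proposition; it simply cites \cite[Proposition 2.6.3]{GS2017}. Your argument --- base-change to $k^{\alg}$, conjugate the image of $K \otimes_k k^{\alg} \cong (k^{\alg})^n$ to the diagonal subalgebra of $M_n(k^{\alg})$, and identify the reduced characteristic polynomial of $x$ with $\prod_i(T - \sigma_i(x)) = \chi_{K/k,x}(T)$ before comparing the $T^{n-1}$ and constant coefficients --- is correct and is precisely the standard proof appearing in the cited reference. Your remark on inseparability is also apt: the result in \cite{GS2017} is stated for separable $K/k$, and in this paper's applications the subfields in question are quadratic extensions of $\bK_P$ with $2$ prime to the residue characteristic, hence automatically separable, so nothing is lost by not pursuing the inseparable case.
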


\begin{lemma}
\label{lem-quadratic-fields-split-central-simple-algebra-serre}
(see Serre \cite[Corollary 3, p.194]{serre-local-fields})

Let $K$ be a field that is complete under a discrete valuation $v$ such that its residue field is quasi-finite, and let $\mathrm{Br}K$ denote the Brauer group of $K$. Let $A$ be a central simple $K$-algebra of degree $n$ (or equivalently of rank $n^2$), and let $[A] \in \mathrm{Br}K$ be the corresponding element of the Brauer group. Then $[A|$ has order $n$ in $\mathrm{Br}K$, and every extension of $K$ of degree $n$ can be embedded in $A$.

\end{lemma}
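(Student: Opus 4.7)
The proof rests on two inputs from Serre's local class field theory: (i) the invariant isomorphism $\inv_K \colon \Br(K) \xrightarrow{\sim} \bQ/\bZ$, which is available whenever $K$ is complete under a discrete valuation with quasi-finite residue field, and (ii) its functoriality under a finite extension $L/K$, namely the restriction formula $\inv_L([A\otimes_K L]) = [L:K]\cdot \inv_K([A])$. Both follow from the computation $\Br(K) = H^2(G_K, {K^{\sep}}^\times) \cong \bQ/\bZ$ and the behavior of the Brauer group under base change along a finite extension. I will take both as black boxes, with the normalization of $\inv_K$ chosen to be compatible with Serre's formula in Theorem~\ref{thm-computation-of-local-symbols-serre} and the ultra-Frobenius $\cF$ of Lemma~\ref{lem-ultra-frobenius-is-a-generator}.

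For the first assertion (the order of $[A]$ is $n$), I would first observe that $\inv_K([A]) \in \tfrac{1}{n}\bZ/\bZ$. Indeed, any maximal commutative subfield $M \subset A$ has $[M:K] = n$ and splits $A$, so by the restriction formula $n\cdot \inv_K([A]) = \inv_M([A\otimes_K M]) = 0$; thus the order of $[A]$ divides $n$. For the reverse divisibility one must use that $A$ is a division algebra (implicit in Serre's statement), at which point the equality of period and index for central simple algebras over such a $K$ gives order exactly $n$: the unramified extension $K_m/K$ of degree $m := \mathrm{ord}([A])$ satisfies $\inv_{K_m}([A\otimes_K K_m]) = m\cdot \inv_K([A]) = 0$, so $K_m$ splits $A$ and hence the index divides $m$, while the period always divides the index.

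For the embedding claim, let $L/K$ be any field extension of degree $n$, ramified or not. The restriction formula together with $\inv_K([A]) \in \tfrac{1}{n}\bZ/\bZ$ gives
\[
\inv_L([A\otimes_K L]) \;=\; [L:K] \cdot \inv_K([A]) \;=\; n \cdot \inv_K([A]) \;=\; 0 \in \bQ/\bZ.
\]
Therefore $A\otimes_K L$ is split, i.e.\ $A\otimes_K L \cong M_n(L)$, and $L$ is a splitting field for $A$ of degree exactly $\deg(A) = n$. The classical theorem on splitting fields of maximal degree---a direct application of the double centralizer theorem, which identifies maximal commutative subfields of $A$ of degree $n$ with degree-$n$ splitting subfields---then produces a $K$-algebra embedding $L \hookrightarrow A$.

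\textbf{Main obstacle.} Conceptually the proof is a packaging exercise, and no serious new technology is needed beyond Serre's local class field theory and the Skolem--Noether/double centralizer apparatus. The only genuinely delicate point is checking that the normalization of $\inv_K$ inherited from the description via the topological generator $\cF$ of $G_{\GF(\kappa)_P}$ is the one for which the restriction formula holds \emph{on the nose}; Serre's construction is set up to guarantee this, so once the isomorphism $\Br(K) \cong \bQ/\bZ$ is in hand both assertions fall out from the single identity $\inv_L \circ \mathrm{Res}_{L/K} = [L:K]\cdot \inv_K$.
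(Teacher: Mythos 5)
The paper does not supply a proof of this lemma at all: it is stated as a direct citation to Serre \cite[Corollary 3, p.194]{serre-local-fields}, so there is no ``paper's own proof'' to compare against. Your argument is a faithful reconstruction of Serre's, built on the invariant isomorphism $\inv_K\colon \Br(K)\xrightarrow{\sim}\bQ/\bZ$ and the compatibility $\inv_L\circ\mathrm{Res}_{L/K} = [L:K]\cdot\inv_K$ (Serre's Proposition 6 and Corollaries 1--2 on pp.~193--194), and the steps are correct: the bound $n\cdot\inv_K[A]=0$ from a maximal subfield, the reverse divisibility from the unramified extension of degree $\mathrm{ord}[A]$ together with period~$=$~index, and the embedding statement from the fact that a degree-$n$ splitting field of a degree-$n$ algebra is a maximal subfield.

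One point you raised in passing deserves to be made louder, because it is a genuine imprecision in the paper rather than in Serre. As you note, the order claim requires $A$ to be a \emph{division} algebra; otherwise $A = M_n(K)$ gives $[A]=0$ of order $1$, not $n$. Serre's actual Corollary~3 is indeed stated for a ``central skew field'' $D$ of degree $n^2$ (and moreover concludes that every degree-$n$ extension \emph{is a maximal commutative subfield} of $D$, slightly stronger than ``embeds in''). The paper's quotation has silently weakened ``skew field'' to ``central simple $K$-algebra,'' rendering the stated lemma literally false as written. Fortunately, in both places the lemma is invoked in the paper (in the proof of Claim~A inside Lemma~\ref{lem-the-set-S-a-b}, and in Theorem~\ref{thm-integrality-at-a-prime-for-K}) the relevant algebra is a ramified quaternion algebra, hence a division algebra, so the applications are sound; but the statement should carry the division-algebra hypothesis to match Serre.

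One small technical remark on the embedding step: the ``classical theorem on splitting fields of maximal degree'' you invoke (degree-$n$ splitting field $\Leftrightarrow$ degree-$n$ subfield of a degree-$n$ algebra) is cleanly available for division algebras, which is the case actually used here; for a general central simple algebra one usually needs to assume the extension is separable. Since the residue field of $K$ is quasi-finite (hence perfect) this is not an issue for the unramified part, but it is another hidden reason the division-algebra hypothesis is the natural one.
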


The following is an analogue of the Hasse--Minskowski local-to-global principle for quadratic forms over $\bK$.

\begin{theorem}
\label{thm-analogue-of-the-Hasse-Minskowksi-theorem}
(see Andriychuk \cite[Theorem 2, p.3]{Andriychuk2004})

A nondegenerate quadratic form $\cQ$ over $\bK$ is isotropic if and only if it is isotropic over all the $P$-adic completions $\bK_P$ of $\bK$.

\end{theorem}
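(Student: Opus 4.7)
The plan is to deduce this local--global principle by transferring the classical Hasse--Minkowski theorem for rational function fields over finite fields through an ultraproduct argument, exploiting that $\bK = \GF(\kappa)(t)$ is itself the ultraproduct of the fields $\bF_{q_s}(t)$. The trivial direction requires no work: an isotropic vector in $\bK^n$ maps diagonally into each completion $\bK_P^n$ and remains isotropic. For the nontrivial direction, I would start by diagonalizing $\cQ$ (recall $\GF(\kappa)$ has characteristic $\ne 2$ for $\cD$-almost all $s \in S$ in the unbounded setting, so this is available), clearing denominators so that $\cQ(x_1,\dots,x_n) = \sum_{i=1}^n a_i x_i^2$ with $a_i \in \bA = \GF(\kappa)[t]$.

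Next, I would realize the form as an ultralimit. Writing each $a_i = \ulim_{s \in S} a_{i,s}$ with $a_{i,s} \in \bF_{q_s}[t]$ gives quadratic forms $\cQ_s = \sum_{i=1}^n a_{i,s} x_i^2$ over $\bF_{q_s}(t)$. The classical Hasse--Minkowski theorem applies to each $\bF_{q_s}(t)$, so it suffices to show that the hypothesis \emph{$\cQ$ is isotropic over every $\bK_P$} forces $\cQ_s$ to be isotropic over every $\bF_{q_s}(t)_{P_s}$ for $\cD$-almost all $s$, which by \L{}o\'s' theorem applied to the first-order statement of isotropy then yields isotropy of $\cQ$ over $\bK$.

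The heart of the argument is the prime-matching step, which I expect to be the main obstacle. The residue field identification $\bA/P\bA \cong \GF(\kappa^{\deg P}) = \prod_{s\in S} \bF_{q_s^{\deg P}}/\cD$ recorded in \ref{itm: N2} together with Nguyen's theory of ultra-Frobenius provides a correspondence between monic irreducible $P \in \bP$ of degree $d$ and monic irreducibles $P_s \in \bF_{q_s}[t]$ of degree $d$ for $\cD$-almost all $s$, compatible with the $P$-adic and $P_s$-adic completions. Under this correspondence, $\bK_P$ is an ultraproduct of the $\bF_{q_s}(t)_{P_s}$, so isotropy of $\cQ$ over $\bK_P$ is, via \L{}o\'s' theorem, equivalent to isotropy of $\cQ_s$ over $\bF_{q_s}(t)_{P_s}$ for $\cD$-almost all $s$. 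A crucial finiteness input is that $\cQ$ has only finitely many primes of bad reduction: arguing as in the discussion preceding Proposition~\ref{pro-description-of-Delta-a-b}, outside the finite set of primes dividing the discriminant of $\cQ$ together with $\infty$, the form is automatically isotropic over $\bK_P$ (since the residue field $\GF(\kappa)_P$ is quasi-finite and $C_1$ by the theory of PAC fields, so every nondegenerate quadratic form in $\ge 3$ variables over it is isotropic, and Hensel lifts).

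Finally, having reduced to a finite set of bad primes $P^{(1)},\dots,P^{(r)} \in \bP \cup \{\infty\}$, for each of them pick a corresponding prime $P^{(j)}_s$ in $\bF_{q_s}[t]$ (for $\cD$-almost all $s$) via the matching above. The hypothesis ``$\cQ$ is isotropic over $\bK_{P^{(j)}}$'' transfers to ``$\cQ_s$ is isotropic over $\bF_{q_s}(t)_{P^{(j)}_s}$'' for $\cD$-almost all $s$. Combined with the observation that $\cQ_s$ is automatically isotropic at its remaining primes (by the same $C_1$/Hensel argument applied pointwise, then transferred), we conclude via classical Hasse--Minkowski that $\cQ_s$ is isotropic over $\bF_{q_s}(t)$ for $\cD$-almost all $s$. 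Since isotropy is expressible by a first-order formula in the parameters $a_{i,s}$, \L{}o\'s' theorem delivers an isotropic vector for $\cQ$ in $\bK^n$, completing the proof. The delicate point throughout is to package all local hypotheses into first-order statements over only finitely many primes; without the finiteness of the set of bad primes, the global quantification ``at all primes'' would escape the reach of \L{}o\'s' theorem.
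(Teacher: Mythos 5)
The paper does not prove this theorem: it is cited from Andriychuk's work on pseudoglobal fields, applicable here because $\GF(\kappa)$ is pseudofinite (cf.\ Remark~\ref{rem-GF(kappa)-is-pseudo-finite}). Your proposed ultraproduct transfer is a genuinely different route, but it rests on a false premise that creates a real gap. It is not true that $\bK = \GF(\kappa)(t)$ ``is itself the ultraproduct of the fields $\bF_{q_s}(t)$'': the field $\bL := \prod_{s\in S}\bF_{q_s}(t)/\cD$ strictly contains $\GF(\kappa)(t)$, since it contains ultralimits such as $\ulim_{s\in S}t^{q_s}$ whose components have unbounded degree and so cannot represent a rational function over $\GF(\kappa)$. (This is exactly the distinction the paper itself maintains between $\bA = \GF(\kappa)[t]$ and its ultrahull $\cU(\bA) = \prod_{s\in S}\bF_{q_s}[t]/\cD$.) Consequently the final step of your argument---applying \L{}o\'s' theorem to the existential sentence ``$\cQ$ is isotropic'' after establishing isotropy of $\cQ_s$ over $\bF_{q_s}(t)$ for $\cD$-almost all $s$---only produces an isotropic vector in $\bL^n$, not in $\bK^n$. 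Since $\bL/\bK$ is an enormous extension (neither of odd degree nor purely transcendental), no Springer-type descent is available, and the argument does not close.

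The gap could be repaired by inserting a uniform height bound: a Cassels-type theorem over $\bF_q(t)$ stating that a diagonal form $\sum_i a_i x_i^2$ with $a_i\in\bF_q[t]$ of degree $\le D$ that is isotropic over $\bF_q(t)$ admits a nontrivial zero with each $\deg x_i\le C(n,D)$, with $C(n,D)$ independent of $q$. Then ``$\cQ_s$ has a nontrivial zero with each $x_i$ of degree at most $C$'' is, after equating coefficients of powers of $t$, a first-order statement about the $n(C+1)$ coefficients in $\bF_{q_s}$, and \L{}o\'s' theorem applied over $\GF(\kappa)=\prod_{s\in S}\bF_{q_s}/\cD$ yields coefficients in $\GF(\kappa)$, hence a genuine isotropic vector in $\GF(\kappa)[t]^n\subset\bK^n$. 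Without some such bound your \L{}o\'s transfer does not land in $\bK$. The remainder of your outline---matching the finitely many bad primes, automatic isotropy at good primes via $C_1$ residue fields and Hensel's lemma, classical Hasse--Minkowski over each $\bF_{q_s}(t)$---is reasonable in spirit, though the binary case $n=2$ needs a separate treatment since $C_1$ is vacuous there.
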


\begin{lemma}
\label{lem-the-set-S-a-b}

Let $a, b$ be elements in $\bK^{\times}$, and let $P \in \bP\cup \{\infty\}$ be a prime in $\bK$. Then
\begin{itemize}

\item [(i)] if $P \in \Delta_{a, b}$, then $\red_P^{-1}(\cU_P) \subseteq \cS_{a, b}(\bK_P) \subseteq \cO_{\bK_P}$, where $\cO_{\bK_P}$ is the valuation ring of $\bK_P$  in \ref{itm: N2}.

\item [(ii)] if $P \not\in \Delta_{a, b}$, then $\cS_{a, b}(\bK_P) = \bK_P$.

\item [(ii)] $\cS_{a, b} = \bK \bigcap_{P \in \Delta_{a, b}}\cS_{a, b}(\bK_P)$.

\end{itemize}

\end{lemma}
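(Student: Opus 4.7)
The plan is to prove (ii) by a direct matrix calculation, to split (i) into its two inclusions handled by the structure of division algebras over complete discretely valued fields, and to deduce (iii) from (ii) together with the local--global principle Theorem \ref{thm-analogue-of-the-Hasse-Minskowksi-theorem}.

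For (ii): when $P \notin \Delta_{a,b}$, the algebra $\bH_{a,b} \otimes \bK_P$ is split and isomorphic to $M_2(\bK_P)$, under which the reduced trace and reduced norm become the matrix trace and determinant. For any $t \in \bK_P$, the matrix $\left(\begin{smallmatrix} 0 & -1 \\ 1 & t \end{smallmatrix}\right) \in \SL_2(\bK_P)$ has trace $t$, hence $\cS_{a, b}(\bK_P) = \bK_P$.

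For (i), set $D = \bH_{a,b} \otimes \bK_P$, a division algebra by hypothesis. To see $\cS_{a,b}(\bK_P) \subseteq \cO_{\bK_P}$, I would invoke the standard extension of $v_P$ to $D$ via $v(d) = \tfrac{1}{2} v_P(\Norm(d))$ (see \cite[Ch.~XII]{serre-local-fields}); any norm-$1$ element $d$ then has $v(d) = 0$, so $d$ is $v$-integral, and Proposition \ref{prop-norm-of-elements-in-central-simple-algebra} equates its reduced trace with $\Trace_{\bK_P(d)/\bK_P}(d) \in \cO_{\bK_P}$. For $\red_P^{-1}(\cU_P) \subseteq \cS_{a,b}(\bK_P)$, take $t \in \cO_{\bK_P}$ with $\red_P(t) \in \cU_P$; the reduction $\bar f(x) := x^2 - \red_P(t)\,x + 1$ is irreducible over $\GF(\kappa)_P$, which forces $f(x) := x^2 - tx + 1$ to be irreducible over $\bK_P$ (any root in $\bK_P$ would be integral and reduce to a root of $\bar f$), so $L := \bK_P[x]/(f)$ is an unramified quadratic extension of $\bK_P$. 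Lemma \ref{lem-quadratic-fields-split-central-simple-algebra-serre} with $n = 2$ then embeds $L \hookrightarrow D$, and a second application of Proposition \ref{prop-norm-of-elements-in-central-simple-algebra} shows that the image $d$ of $x$ satisfies $\Norm(d) = \Norm_{L/\bK_P}(x) = 1$ and $\Trace(d) = \Trace_{L/\bK_P}(x) = t$, placing $t \in \cS_{a,b}(\bK_P)$.

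For (iii), the containment $\cS_{a,b} \subseteq \bK \cap \bigcap_{P \in \Delta_{a,b}} \cS_{a,b}(\bK_P)$ is immediate. Conversely, suppose $t \in \bK$ belongs to $\cS_{a,b}(\bK_P)$ for every $P \in \Delta_{a,b}$; by (ii) the same automatically holds at every other prime, so $t \in \cS_{a,b}(\bK_P)$ for all $P \in \bP \cup \{\infty\}$. The condition $t \in \cS_{a,b}(\bK_P)$ translates into representability of $c := 1 - (t/2)^2$ by the ternary form $\langle -a, -b, ab\rangle$ over $\bK_P$; if $c = 0$ (i.e.\ $t = \pm 2$) then $t \in \cS_{a,b}$ directly via $(x_1, x_2, x_3, x_4) = (\pm 1, 0, 0, 0)$, so assume $c \ne 0$. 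Representability is then equivalent to isotropy of the nondegenerate quaternary form $\cQ := \langle -a, -b, ab, -c\rangle$, and Theorem \ref{thm-analogue-of-the-Hasse-Minskowksi-theorem} produces a $\bK$-isotropic vector for $\cQ$ from local isotropy at every $\bK_P$, from which one extracts a representation of $c$ over $\bK$ (handling separately the harmless case of a vector with vanishing last coordinate, which would force $\langle -a,-b,ab\rangle$ to be universal over $\bK$), yielding $t \in \cS_{a,b}$. The main technical obstacle is the lower bound in (i): the joint use of the embedding of an unramified quadratic extension into $D$ (Lemma \ref{lem-quadratic-fields-split-central-simple-algebra-serre}) and the identification of reduced with field-theoretic norm and trace (Proposition \ref{prop-norm-of-elements-in-central-simple-algebra}) is what makes the hypothesis $\red_P(t) \in \cU_P$ precisely the right condition for $t$ to arise as the reduced trace of a norm-$1$ element.
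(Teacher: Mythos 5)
Your proof is correct, and it is structured along similar high-level lines (handle (ii) directly, split (i) into two inclusions, deduce (iii) from (ii) plus the Hasse--Minkowski analogue Theorem \ref{thm-analogue-of-the-Hasse-Minskowksi-theorem}), but several ingredients differ from what the paper does. The paper's central device is an auxiliary ``Claim A'': for $P \in \Delta_{a,b}$, a polynomial $x^2 - \epsilon x + 1$ is a reduced characteristic polynomial of a norm-$1$ element of $\bH_{a,b}\otimes\bK_P$ if and only if it is a power of an irreducible over $\bK_P$. This is proved by an explicit quaternionic computation, and then both inclusions of (i) are read off: the lower inclusion by lifting irreducibility from the residue field via Gauss' lemma, and the upper inclusion by a Hensel's-lemma argument showing that $x^2 - \alpha x + 1$ splits into distinct linear factors whenever $v_P(\alpha) < 0$. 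Your proof of the lower inclusion in (i) is essentially the same unramified-quadratic-extension argument via Lemma \ref{lem-quadratic-fields-split-central-simple-algebra-serre} and Proposition \ref{prop-norm-of-elements-in-central-simple-algebra}. For the upper inclusion, however, you replace the paper's Hensel/Claim A route with the structure theory of orders in a quaternion division algebra over a complete discretely valued field: the canonical valuation $v(d)=\tfrac12 v_P(\Norm d)$ makes every norm-$1$ element integral, and integrality forces the reduced trace into $\cO_{\bK_P}$. This is cleaner and needs no case analysis, at the cost of importing the (standard but nontrivial) classification of valuations and maximal orders in local division algebras. For (ii) you give an explicit $\SL_2$ matrix realizing every trace, whereas the paper simply defers to Park \cite[Lemma 2.2(a)]{park}; your version is more self-contained. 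For (iii) you spell out the translation into isotropy of a quaternary form and the necessary edge cases (the point $t = \pm 2$, and an isotropic vector with vanishing last coordinate forcing universality of the ternary subform), which the paper leaves implicit. One small caution in your upper-bound argument: Proposition \ref{prop-norm-of-elements-in-central-simple-algebra} as stated requires the subfield to have degree equal to the degree of the algebra (namely $2$), so a norm-$1$ element $d$ lying in the center $\bK_P$ needs a one-line separate treatment ($d^2 = 1$ forces $d = \pm 1$, trace $\pm 2 \in \cO_{\bK_P}$); once noted, the argument is complete.
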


\begin{proof}

Throughout the proof, $\text{Norm}$, $\text{Trace}$ denote the reduced norm and trace maps of $\bH_{a, b}\otimes \bK_P$, respectively.

We first prove the following claim:

$\star$ \textbf{Claim A.} \textit{Let $P \in \Delta_{a, b}$. For any element $\epsilon \in \bK_P$, the polynomial $f_{\epsilon}(x) = x^2 - \epsilon x + 1$ is a reduced characteristic polynomial of an element of $\bH_{a, b}\otimes \bK_P$ if and only if it is a power of a monic irreducible polynomial in $\bK_P[x]$.}

Indeed, write
\begin{align*}
\bH_{a, b} = \bK\cdot 1 + \bK \cdot \alpha + \bK \cdot \beta + \bK \cdot \alpha\beta,
\end{align*}
where $\alpha^2 = a$, $\beta^2 = b$, and $\alpha\beta = -\beta\alpha$.

Suppose that $f_{\epsilon}(x)$ is a reduced characteristic polynomial of an element of $\bH_{a, b}\otimes \bK_P$, that is, there exists an element $\fq = u + \alpha v + \beta w + \alpha\beta z \in \bH_{a, b}\otimes \bK_P$ for some elements $u, v, w, z \in \bK_P$ such that
\begin{align}
\label{e-eqn2-main-lem2}
\Norm(\fq) &= u^2 - a v^2 - bw^2 + abz^2 = 1,
\end{align}
and
\begin{align}
\label{e-eqn3-main-lem2}
\Trace(\fq) &= 2u = \epsilon.
\end{align}

 If $\epsilon^2 - 4$ is a nonsquare element in $\bK_P$, then $f_{\epsilon}(x)$ is irreducible over $\bK_P$.

So without loss of generality, assume that $\epsilon^2 - 4$ is a square in $\bK_P$, and thus $\epsilon^2 - 4 = \lambda^2$ for some element $\lambda \in \bK_P$. By (\ref{e-eqn2-main-lem2}), (\ref{e-eqn3-main-lem2}), we deduce that
\begin{align*}
\dfrac{\epsilon^2}{4} - 1 - a v^2 = bw^2 - abz^2 = b(w^2 - az^2).
\end{align*}
Since $\epsilon^2/4 - 1 = \lambda^2/4$, we deduce that
\begin{align}
\label{e-eqn4-main-lem2}
\left(\dfrac{\lambda}{2}\right)^2 - av^2 = \Norm\left(\dfrac{\lambda}{2} - \sqrt{a}v\right) = b\Norm(w - \sqrt{a}z).
\end{align}

Since $\bH_{a, b}$ is nonsplit (or ramified) at $P$, $a$ is a nonsquare element in $\bK_P$.

If $w^2 - az^2 = 0$, then (\ref{e-eqn4-main-lem2}) implies that $\left(\dfrac{\lambda}{2}\right)^2 - av^2 = 0$. Since $a$ is a nonsquare element in $\bK_P$, we deduce that $\lambda = v = 0$, which implies that $\epsilon = \pm 2$. Thus
\begin{align*}
f_{\epsilon}(x) = x^2 \pm 2 x + 1  = (x \pm 1)^2,
\end{align*}
which proves our claim.

If $w^2 - az^2 \ne 0$, we see from (\ref{e-eqn4-main-lem2}) that
\begin{align*}
b = \Norm\left(\dfrac{\lambda/2 - \sqrt{a}v}{w - \sqrt{a}z}\right),
\end{align*}
which implies that $\bH_{a, b}\otimes \bK_P$ is split (or unramified), a contradiction.

We now prove the converse. Suppose that $f_{\epsilon}(x) = x^2 - \epsilon x + 1$ is a power of an irreducible polynomial in $\bK_P$.

If $f_{\epsilon}(x) = (u x + v)^2$ for some elements $u, v\in \bK_P$, then $u = 1$ and $v = 1$, and thus $\epsilon = -2$. In this case, we see that $f_{\epsilon}(x) = x^2 + 2x + 1$ is a reduced characteristic polynomial of $-1$.

Suppose that $f_{\epsilon}(x)$ is irreducible in $\bK_P[x]$. So $\bK_P[x]/(f_{\epsilon}(x))$ is a quadratic field extension of $\bK_P$, and it thus follows from Lemma \ref{lem-quadratic-fields-split-central-simple-algebra-serre} that there exists an embedding of $K_P[x]/(f_{\epsilon}(x))$ into $\bH_{a, b}\otimes \bK_P$. Write $\mathfrak{r} \in \bH_{a, b}\otimes \bK_P$ for the image of $x$ under the composition of maps $\bK_P[x] \to \bK_P[x]/(f_{\epsilon}(x)) \hookrightarrow \bH_{a, b}\otimes \bK_P$. By Lemma \ref{prop-norm-of-elements-in-central-simple-algebra}, $\text{Norm}(\mathfrak{r}) = 1$ and $\text{Trace}(\mathfrak{r}) = \epsilon$, which proves our claim.

We are now ready to prove part (i). Suppose that $P \in \Delta_{a, b}$. Take an arbitrary $\lambda \in \red_P^{-1}(\cU_P) \subseteq \cO_{\bK_P}$, and thus $\epsilon = \red_P(\lambda) \in \cU_P \subseteq \GF(\kappa)_P$. Since the polynomial $x^2 -\epsilon x + 1$ is irreducible over $\GF(\kappa)_P$, it follows from Gauss' lemma (see Lang \cite{Lang}) that $x^2 - \lambda x + 1$ is irreducible over $\bK_P$. Thus using Claim A above, we deduce that $x^2 - \lambda x + 1$ is a reduced characteristic polynomial of some element of $\bH_{a, b}\otimes \bK_P$, and thus $\lambda \in \cS_{a, b}(\bK_P)$. Thus $\red_P^{-1}(\cU_P) \subseteq \cS_{a, b}(\bK_P)$.

We prove the second inclusion of part (i). Let $\cM_{\bK_P}$ denote the maximal ideal of $\cO_{\bK_P}$ that consists of all elements $\alpha$ of $\cO_{\bK_P}$ whose valuations $v_P(\alpha)$ are positive. If $\alpha \in \bK_P\setminus \cO_{\bK_P}$, then we contend that $f_{\alpha}(x) = x^2 - \alpha x + 1$ has two distinct roots, and thus is a product of two distinct linear factors. Indeed, since $\alpha \in \bK_P\setminus \cO_{\bK_P}$, $\alpha^{-1} \in \cO_{\bK_P}$, and thus $\alpha = \dfrac{1}{\beta}$ for some element $\beta \in \cO_{\bK_P}$ such that $v_P(\beta) > 0$.

We see that
\begin{align*}
f_{\alpha}(x) = \dfrac{g_{\beta}(x)}{\beta},
\end{align*}
where
\begin{align*}
g_{\beta}(x) = \beta x^2 -  x + \beta.
\end{align*}

Since $v_P(\beta) > 0$, $g_{\beta}(0) = \beta \equiv 0 \pmod{\cM_{\bK_P}}$. On the other hand, $g'(0) = -1 \not\equiv 0 \pmod{\cM_{\bK_P}}$. Thus by Hensel's lemma (see \cite[Theorem 1.3.1]{prestel}), $g_{\beta}(x)$ has a root $x_0 \in \cO_{\bK_P}$ such that $x_0 \equiv 0 \pmod{\cM_{\bK_P}}$. Thus $x_0$ is a root of $f_{\alpha}(x)$. The other root $x_1 \in \bK_P$ of $f_{\alpha}(x)$ satisfies $x_0x_1 = 1$, and thus $x_1 \ne x_0$ since $v_P(x_1) = -v_P(x_0) < 0 < v_P(x_0)$. Therefore $x^2 - \alpha x + 1$ is a product of two distinct linear polynomials for any element $\alpha \in \bK_P\setminus \cO_{\bK_P}$. By \textbf{Claim A} above, $\alpha$ does not belong in $\cS_{a, b}(\bK_P)$. Thus $\cS_{a, b}(\bK_P) \subseteq \cO_{\bK_P}$, which proves part (i).

Part (ii) follows a similar proof of Park \cite[Lemma 2.2(a)]{park}.

Using part (ii) and since $\bK \subseteq \bK_P$ for any prime $P$, we see that
\begin{align*}
\bK \bigcap \left(\bigcap_{P \in \Delta_{a, b}} \cS_{a, b}(\bK_P)\right) = \bK \bigcap \left(\bigcap_{P \in \bP \cup \{\infty\}} \cS_{a, b}(\bK_P)\right),
\end{align*}
and thus part (iii) follows from an analogue of the Hasse--Minskowski local-to-global principle (see Theorem \ref{thm-analogue-of-the-Hasse-Minskowksi-theorem}).

\end{proof}

\begin{lemma}
\label{lem-the-set-T-a-b}

For any $a, b \in \bK^{\times}$,
\begin{align*}
\cT_{a, b} = \bigcap_{P \in \Delta_{a, b}}\cO_{\bK_P} = \bigcap_{P \in \Delta_{a, b}}\bA_P.
\end{align*}
In particular, $\cT_{a, b}$ is a commutative ring.

\end{lemma}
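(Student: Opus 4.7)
The plan is to prove the two equalities and commutativity in three essentially separate steps, with the hard work being a weak-approximation construction that shows the reverse containment.

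First I would dispense with the easy equality $\bigcap_{P \in \Delta_{a,b}} \cO_{\bK_P} = \bigcap_{P \in \Delta_{a,b}} \bA_P$ by noting that each $\cO_{\bK_P}$ is intersected inside $\bK$ (the elements of the $\cT_{a,b}$-side all live in $\bK$), and for every prime $P$ the identity $\cO_{\bK_P} \cap \bK = \bA_P$ holds by the very construction of the localization as the valuation ring of $v_P$ in $\bK$. Once this is noted, the whole proof reduces to showing $\cT_{a,b} = \bigcap_{P\in\Delta_{a,b}}\bA_P$ inside $\bK$.

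Next, for the inclusion $\cT_{a,b}\subseteq\bigcap_{P\in\Delta_{a,b}}\bA_P$, I would invoke Lemma \ref{lem-the-set-S-a-b}(i): for each $P\in\Delta_{a,b}$, $\cS_{a,b}(\bK_P)\subseteq\cO_{\bK_P}$, and by Lemma \ref{lem-the-set-S-a-b}(iii) $\cS_{a,b}\subseteq\cS_{a,b}(\bK_P)$. Intersecting with $\bK$ gives $\cS_{a,b}\subseteq\bA_P$. Since $\bA_P$ is additively closed, $\cT_{a,b}=\cS_{a,b}+\cS_{a,b}\subseteq\bA_P$ for each such $P$, yielding the desired containment.

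The substantive step is the reverse inclusion $\bigcap_{P\in\Delta_{a,b}}\bA_P\subseteq\cT_{a,b}$. Given $\lambda$ in this intersection, I would use Lemma \ref{lem-cU-P-is-the-ultraproduct-of-U-q}, which says $\cU_P+\cU_P=\GF(\kappa)_P$ for each $P\in\Delta_{a,b}$, to decompose $\red_P(\lambda)=\epsilon_{1,P}+\epsilon_{2,P}$ with $\epsilon_{i,P}\in\cU_P$. Since $\Delta_{a,b}$ is a finite set of places of the function field $\bK=\GF(\kappa)(t)$ (Proposition \ref{pro-description-of-Delta-a-b}), weak approximation for $\bK$ produces a single element $\lambda_1\in\bK$ with $v_P(\lambda_1-\tilde\epsilon_{1,P})>0$ (where $\tilde\epsilon_{1,P}$ is any lift to $\bA_P$) for every $P\in\Delta_{a,b}$, simultaneously. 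Then $\lambda_1\in\bA_P$ with $\red_P(\lambda_1)=\epsilon_{1,P}\in\cU_P$, and $\lambda_2:=\lambda-\lambda_1\in\bA_P$ with $\red_P(\lambda_2)=\epsilon_{2,P}\in\cU_P$, at every $P\in\Delta_{a,b}$. Lemma \ref{lem-the-set-S-a-b}(i) places both $\lambda_i$ in $\red_P^{-1}(\cU_P)\subseteq\cS_{a,b}(\bK_P)$ for $P\in\Delta_{a,b}$, and Lemma \ref{lem-the-set-S-a-b}(iii) then gives $\lambda_i\in\cS_{a,b}$, so $\lambda=\lambda_1+\lambda_2\in\cT_{a,b}$.

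Finally, the commutativity assertion is immediate: having identified $\cT_{a,b}$ with $\bigcap_{P\in\Delta_{a,b}}\bA_P$, a finite intersection of (commutative) subrings of $\bK$, $\cT_{a,b}$ inherits the structure of a commutative subring of $\bK$. The main obstacle I anticipate is making the weak-approximation step fully explicit, particularly when $\infty\in\Delta_{a,b}$: I would either appeal to the classical weak approximation theorem for inequivalent valuations on $\bK$, or handle the non-infinite primes by the Chinese Remainder Theorem in $\bA$ and then correct at $\infty$ by adding a suitable element of $\GF(\kappa)\subseteq\bK$, using that the residue field at $\infty$ is exactly $\GF(\kappa)$ so lifts are trivial there.
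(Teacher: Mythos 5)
Your proposal is correct and follows essentially the same route as the paper: the easy identification $\cO_{\bK_P}\cap\bK=\bA_P$, the forward inclusion via Lemma \ref{lem-the-set-S-a-b}(i),(iii), and the reverse inclusion via the decomposition $\cU_P+\cU_P=\GF(\kappa)_P$ from Lemma \ref{lem-cU-P-is-the-ultraproduct-of-U-q} followed by an approximation argument over the finite set $\Delta_{a,b}$ to produce a single $\beta\in\bK$ landing in $\red_P^{-1}(\cU_P)$ with $\alpha-\beta$ likewise for every $P\in\Delta_{a,b}$. The paper cites the strong approximation theorem of Endo where you invoke weak approximation (or CRT plus a correction at $\infty$), but since only finitely many places are involved these are interchangeable here and the substance of the argument is identical.
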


\begin{proof}

Since $\cO_{\bK_P} \cap \bK = \bA_P$ and $\cT_{a, b} \subseteq \bK$, $\bigcap_{P \in \Delta_{a, b}}\cO_{\bK_P} = \bigcap_{P \in \Delta_{a, b}}\bA_P$ follows immediately. So it suffices to prove that
\begin{align*}
\cT_{a, b} = \bigcap_{P \in \Delta_{a, b}}\cO_{\bK_P}.
\end{align*}

By parts (i) and (iii) in Lemma \ref{lem-the-set-S-a-b}, we see immediately  that
\begin{align*}
\cS_{a, b} \subseteq \bigcap_{P \in \Delta_{a, b}}\cS_{a, b}(\bK_P) \subseteq \bigcap_{P \in \Delta_{a, b}}\cO_{\bK_P},
\end{align*}
and thus
\begin{align}
\label{e-eqn1-the-set-T-a-b-lemma}
\cT_{a, b} = \cS_{a, b} + \cS_{a, b} \subseteq \bigcap_{P \in \Delta_{a, b}}\cO_{\bK_P}.
\end{align}

For the reverse inclusion, set $\cV_P = \red_P^{-1}(\cU_P)$ for each prime $P \in \bP \cup \{\infty\}$. By Lemma \ref{lem-cU-P-is-the-ultraproduct-of-U-q}, we obtain that
\begin{align}
\label{e-eqn2-the-set-T-a-b-lemma}
\cV_P + \cV_P = \red_P^{-1}(\GF(\kappa)_P) = \cO_{\bK_P}.
\end{align}

Take an arbitrary element $\alpha \in \bigcap_{P \in \Delta_{a, b}}\cO_{\bK_P}$. For each prime $P \in \Delta_{a, b}$, we know from (\ref{e-eqn2-the-set-T-a-b-lemma}) that there exists an element $\beta_P \in \cO_{\bK_P}$ such that both $\beta_P$ and $\alpha - \beta_P$ belong in $\cV_P$. Since $\Delta_{a, b}$ is a finite set (see Proposition \ref{pro-description-of-Delta-a-b}), we deduce from the Strong Approximation Theorem for $\bK$ \footnote{The strong approximation theorem given in Endo \cite[Theorem 3, p.84]{endo} applies to every algebraic function field over an arbitrary constant field that is not necessarily finite, and so the rational function field $\bK$ considered in our paper is a special case.}(see Endo \cite[Theorem 3, p.84]{endo})) that there exists an element $\beta \in \bK$ such that both $\beta$ and $\alpha - \beta$ belong in $\cV_P = \red_P^{-1}(\cU_P)$ for all $P \in \Delta_{a, b}$. Thus we deduce from Lemma \ref{lem-the-set-S-a-b}(i) that both $\beta$ and $\alpha - \beta$ belong $\cS_{a, b}$, and thus
\begin{align*}
\alpha = \beta + (\alpha - \beta) \in \cS_{a, b} + \cS_{a, b} = \cT_{a, b}.
\end{align*}
Therefore $\bigcap_{P \in \Delta_{a, b}}\cO_{\bK_P} \subseteq \cT_{a, b}$. The lemma thus follows from (\ref{e-eqn1-the-set-T-a-b-lemma}).

\end{proof}

\begin{remark}
\label{rem-Daans-phd-thesis}

In his Ph.D. thesis, Daans proves an analogue of the above result. Let $\bF$ be a global field or a function field in one variable over a pseudo-algebraically closed field $F$ such that $F$ is either perfect or of odd characteristic. Let $q(x, y, z, w)$ be a totally indefinite $2$-fold Pfister form defined over $\bF$ of the form $x^2 - ay^2 - bz^2 + abw^2$ for some elements $a, b \in \bF$. Note that $q$ is the norm form of the quaternion algebra $\bH_{a, b}$. Let $\cS(q)$ be the set of elements $d \in \bF$ such that $q$ is isotropic over the splitting field of $t^2 - dt + 1$. Note that $\cS(q)$ is an analogue of $\cS_{a, b}$. Let $\Delta_q$ denote the set of $\bZ$-valuations on $\bF$ for which $q$ is anisotropic over the henselisation $\bF_v$. Then Daans' Theorem (see \cite[Theorem 6.3.10, p.128]{daans-phd-thesis}) implies that
\begin{align*}
S(q) + S(q) = \bigcap_{v \in \Delta_q}\cO_v,
\end{align*}
where $\cO_v$ is the valuation ring of $\bF_v$.

\end{remark}

\begin{lemma}
\label{lem-T-is-a-b-Diophantine}

Let $a, b$ be elements in $\bK^{\times}$. Then
\begin{itemize}

\item [(i)] $\cT_{a, b}$ is Diophantine and defined by $7$ quantifiers.

\item [(ii)] $\cT_{a, b}^{\times}$ is Diophantine and defined by $7$ quantifiers.

\end{itemize}

\end{lemma}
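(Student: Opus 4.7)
The plan is to read both formulas directly from the existential description of $\cT_{a,b}$ coming from Lemma \ref{lem-the-set-T-a-b}, with a short valuation trick in part (ii) to avoid doubling the number of quantifiers.

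For part (i), by definition $y \in \cT_{a,b} = \cS_{a,b}+\cS_{a,b}$ iff $y = 2x_1 + 2x_1'$ for some tuples $(x_1,x_2,x_3,x_4)$ and $(x_1',x_2',x_3',x_4')$ both annihilating the Pfister form $u^2 - a v^2 - b w^2 + ab z^2 - 1$. One of the eight auxiliary variables is redundant: the ring-language equation $x_1' = y/2 - x_1$ eliminates $x_1'$ and produces the existential formula
\begin{align*}
\exists\, x_1,x_2,x_3,x_4,x_2',x_3',x_4' \in \bK : \ x_1^2 - a x_2^2 - b x_3^2 + ab x_4^2 = 1 \ \land \ (y/2 - x_1)^2 - a(x_2')^2 - b(x_3')^2 + ab(x_4')^2 = 1,
\end{align*}
which uses exactly seven quantifiers.

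For part (ii), the key observation is a short valuation identity: for every discrete valuation $v$ of $\bK$ and every $y \in \bK^{\times}$, one has $v(y+y^{-1}) \ge 0$ iff $v(y) = 0$. Indeed, if $v(y) = k \ne 0$ then $v(y+y^{-1}) = -|k| < 0$, while if $v(y) = 0$ then $v(y^{-1}) = 0$ and hence $v(y+y^{-1}) \ge 0$. Combined with Lemma \ref{lem-the-set-T-a-b}, this gives the chain of equivalences
\begin{align*}
y \in \cT_{a,b}^{\times} \iff v_P(y) = 0 \text{ for all } P \in \Delta_{a,b} \iff y + y^{-1} \in \cT_{a,b}.
\end{align*}
To render ``$y + y^{-1} \in \cT_{a,b}$'' in the pure language of rings (with no reference to $y^{-1}$), plug $z = y+y^{-1} = (y^2+1)/y$ into the formula of part (i) and multiply the second Pfister equation through by $4y^2$ to clear denominators; this produces a polynomial condition in $y$ and the seven unknowns $x_1,x_2,x_3,x_4,x_2',x_3',x_4'$ of the form
\begin{align*}
x_1^2 - a x_2^2 - b x_3^2 + ab x_4^2 = 1 \ \land \ (y^2+1 - 2yx_1)^2 - 4ay^2(x_2')^2 - 4by^2(x_3')^2 + 4aby^2(x_4')^2 = 4y^2.
\end{align*}
For $y \ne 0$ the substitution is reversible, and at $y = 0$ the second equation degenerates to $1 = 0$ and is unsatisfiable, so the formula automatically excludes $0$, as desired.

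The main conceptual step is the valuation identity above: it collapses the two membership conditions ``$y \in \cT_{a,b}$'' and ``$y^{-1} \in \cT_{a,b}$''---each of which naively costs seven quantifiers---into the single condition ``$y + y^{-1} \in \cT_{a,b}$'', so that a naïve count of roughly fifteen quantifiers drops down to seven. Once this is in hand, clearing the denominator is a routine manipulation and no further ingredients are needed.
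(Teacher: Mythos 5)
Your proof is correct and follows essentially the same route as the paper: for (i) both use the two-copies-of-the-Pfister-form decomposition of $\cT_{a,b}=\cS_{a,b}+\cS_{a,b}$ with one auxiliary variable eliminated to land at seven quantifiers, and for (ii) both use the observation that $\alpha$ is a unit in $\bigcap_{P\in\Delta_{a,b}}\bA_P$ iff $(\alpha^2+1)/\alpha=\alpha+\alpha^{-1}$ lies in $\cT_{a,b}$. Your writeup is a bit more explicit than the paper's in clearing denominators and noting that the resulting polynomial formula automatically excludes $y=0$, but there is no substantive difference.
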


\begin{proof}

Since $\cS_{a, b}$ is Diophantine and defined by $3$ quantifiers, part (i) follows immediately from the definition of $\cT_{a, b}$.

Take an arbitrary prime $P \in \bP \cup \{\infty\}$. For any $\alpha \in \bK^{\times}$, we see that $\alpha \in \cO_{\bK_P}^{\times}$ if and only if $\dfrac{\alpha^2 + 1}{\alpha} \in \cO_{\bK_P}$. It thus follows from Lemma \ref{lem-the-set-T-a-b} that $\alpha \in \cT_{a, b}^{\times}$ if and only if $\dfrac{\alpha^2 + 1}{\alpha} \in \cT_{a, b}$. Therefore part (ii) follows immediately.

\end{proof}

The following result is an analogue of Lemma 3.14 given in Park \cite{park} for the rational function field $\bK$.

\begin{lemma}
\label{lem-Lemma-3.14-in-Park}

For any $a, b \in \bK^{\times}$,
\begin{align*}
\bK^{\times 2} \cdot \cT_{a, b}^{\times} = \bigcap_{P \in \Delta_{a, b}}v_P^{-1}(2\bZ).
\end{align*}

\end{lemma}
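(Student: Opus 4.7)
The plan is to prove the two inclusions separately, using Lemma \ref{lem-the-set-T-a-b} to describe $\cT_{a,b}$ as an intersection of valuation rings and the strong approximation theorem for $\bK$ to realize prescribed valuations at finitely many primes.

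For the inclusion $\bK^{\times 2} \cdot \cT_{a, b}^{\times} \subseteq \bigcap_{P \in \Delta_{a, b}}v_P^{-1}(2\bZ)$, I would first identify the units of $\cT_{a,b}$ explicitly. By Lemma \ref{lem-the-set-T-a-b}, $\cT_{a,b} = \bigcap_{P \in \Delta_{a,b}} \bA_P$, so an element $u \in \bK^{\times}$ lies in $\cT_{a,b}^{\times}$ if and only if both $u$ and $u^{-1}$ lie in $\bA_P$ for every $P \in \Delta_{a,b}$, which is equivalent to $v_P(u) = 0$ for every $P \in \Delta_{a,b}$. Given any $\alpha = c^2 u$ with $c \in \bK^{\times}$ and $u \in \cT_{a,b}^{\times}$, one then has $v_P(\alpha) = 2 v_P(c) + v_P(u) = 2 v_P(c) \in 2\bZ$ for every $P \in \Delta_{a,b}$, as required.

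For the reverse inclusion $\bigcap_{P \in \Delta_{a, b}}v_P^{-1}(2\bZ) \subseteq \bK^{\times 2} \cdot \cT_{a, b}^{\times}$, take $\alpha \in \bK^{\times}$ with $v_P(\alpha) \in 2\bZ$ for every $P \in \Delta_{a,b}$. Set $n_P := v_P(\alpha)/2 \in \bZ$ for each such $P$. Since $\Delta_{a,b}$ is a finite subset of $\bP \cup \{\infty\}$ by Proposition \ref{pro-description-of-Delta-a-b}, the strong approximation theorem for $\bK$ (Endo \cite[Theorem 3, p.84]{endo}, as already invoked in the proof of Lemma \ref{lem-the-set-T-a-b}) yields an element $c \in \bK^{\times}$ with $v_P(c) = n_P$ for every $P \in \Delta_{a,b}$. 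Then $u := \alpha/c^2$ satisfies $v_P(u) = v_P(\alpha) - 2 v_P(c) = 0$ for every $P \in \Delta_{a,b}$, so by the description of $\cT_{a,b}^{\times}$ obtained above, $u \in \cT_{a,b}^{\times}$. Writing $\alpha = c^2 u$ exhibits $\alpha \in \bK^{\times 2} \cdot \cT_{a,b}^{\times}$, and the lemma follows.

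I do not expect a genuine obstacle: the proof reduces to unpacking $\cT_{a,b}^{\times}$ via Lemma \ref{lem-the-set-T-a-b} and applying strong approximation to the finite set $\Delta_{a,b}$. The one point that deserves care is the identification of $\cT_{a,b}^{\times}$ with $\{u \in \bK^{\times} : v_P(u) = 0 \text{ for all } P \in \Delta_{a,b}\}$, which relies crucially on the intersection description $\cT_{a,b} = \bigcap_{P \in \Delta_{a,b}} \bA_P$ rather than the original definition $\cT_{a,b} = \cS_{a,b} + \cS_{a,b}$.
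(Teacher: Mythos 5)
Your proof is correct and takes essentially the same approach as the paper, which simply refers to Park's Lemma 3.14 and notes that one should substitute Lemma \ref{lem-the-set-T-a-b} and the strong (or weak) approximation theorem for the corresponding number-field ingredients. You have spelled out what the paper leaves implicit---identifying $\cT_{a,b}^{\times}$ via $\cT_{a,b} = \bigcap_{P \in \Delta_{a,b}} \bA_P$ as the elements of valuation zero at every $P \in \Delta_{a,b}$, then using approximation over the finite set $\Delta_{a,b}$ to realize half the valuation vector---which is exactly Park's argument transposed to $\bK$.
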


\begin{proof}

The proof of Lemma 3.14 given in Park \cite{park} essentially relies on a similar result of Lemma \ref{lem-the-set-T-a-b} for a number field $K$ that is Proposition 2.3 in Park \cite{park}, and the weak approximation theorem for global fields. We can replace Proposition 2.3 in Park \cite{park} by Lemma \ref{lem-the-set-T-a-b} and the weak approximation theorem for number fields by the weak approximation theorem for valued fields (see Engler--Prestel \cite[Theorem 2.4.1]{prestel}) or by the strong approximation theorem in Endo \cite[Theorem 3]{endo}, and following essentially the proof of Park \cite[Lemma 3.14]{park}, the lemma follows immediately.

\end{proof}

For each element $c \in \bK^{\times}$, define
\begin{align}
\label{e-def-I-a-b-^c-definition}
\cI_{a, b}^c = c \cdot \bK^2 \cdot \cT_{a, b}^{\times} \bigcap (1 - \bK^2 \cdot \cT_{a, b}^{\times}).
\end{align}

Repeating the proof of Lemma 3.15 in Park \cite{park}, and using Lemma \ref{lem-Lemma-3.14-in-Park}, the following follows immediately.

\begin{lemma}
\label{lem-Lemma-3.15-in-Park}

For any $a, b, c \in \bK^{\times}$,
\begin{align*}
\cI_{a, b}^c &= \{\alpha \in \bK \; | \; \text{$v_P(\alpha)$ is an odd positive integer for all $P \in \Delta_{a, b} \cap \bP(c)$,}\\
 &\text{and both $v_P(\alpha)$ and $v_P(1 - \alpha)$ are even for all $P \in \Delta_{a, b} \setminus \bP(c)$}\}.
\end{align*}

\end{lemma}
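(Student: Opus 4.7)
The plan is to reinterpret membership in $\cI_{a,b}^c$ purely in terms of $P$-adic valuations at primes in $\Delta_{a,b}$, and then perform a case analysis on the parity of $v_P(c)$. By Lemma \ref{lem-Lemma-3.14-in-Park}, we have $\bK^{\times 2}\cdot \cT_{a,b}^{\times} = \bigcap_{P \in \Delta_{a,b}} v_P^{-1}(2\bZ)$. Therefore, for any $\alpha \in \bK$, the condition $\alpha \in c\cdot \bK^2 \cdot \cT_{a,b}^{\times}$ is equivalent to $v_P(\alpha) \equiv v_P(c) \pmod{2}$ for every $P \in \Delta_{a,b}$, while the condition $\alpha \in 1 - \bK^2 \cdot \cT_{a,b}^{\times}$ is equivalent to $v_P(1 - \alpha) \in 2\bZ$ for every $P \in \Delta_{a,b}$. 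Intersecting these two conditions gives an equivalent description of $\cI_{a,b}^c$.

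Next, I would split $\Delta_{a,b}$ into $\Delta_{a,b}\cap \bP(c)$ and $\Delta_{a,b}\setminus \bP(c)$. For primes $P$ in the latter set, $v_P(c)$ is even by definition of $\bP(c)$ (see \ref{itm: N4}), so the two conditions above immediately give that both $v_P(\alpha)$ and $v_P(1-\alpha)$ are even, matching the target description. For primes $P \in \Delta_{a,b}\cap \bP(c)$, $v_P(c)$ is odd, which forces $v_P(\alpha)$ to be odd; in particular $v_P(\alpha)\neq 0$. Here the ultrametric inequality is decisive: if $v_P(\alpha) < 0$, then $v_P(1-\alpha) = \min(v_P(1), v_P(\alpha)) = v_P(\alpha)$ is odd, contradicting the requirement that $v_P(1-\alpha)$ be even. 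Hence $v_P(\alpha)$ must be a positive odd integer.

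For the reverse inclusion, I would start from an $\alpha$ satisfying the stated valuation conditions and run the above analysis backwards. For $P \in \Delta_{a,b}\cap \bP(c)$, the positivity $v_P(\alpha) > 0$ yields $v_P(1-\alpha) = 0$, so $v_P(\alpha) \equiv v_P(c) \pmod{2}$ and $v_P(1-\alpha) \in 2\bZ$ both hold. For $P \in \Delta_{a,b}\setminus \bP(c)$, the evenness of $v_P(\alpha)$ (matching the even $v_P(c)$) and of $v_P(1-\alpha)$ give both congruence conditions directly. Applying Lemma \ref{lem-Lemma-3.14-in-Park} then produces the factorizations $\alpha \in c\cdot\bK^2\cdot \cT_{a,b}^{\times}$ and $1 - \alpha \in \bK^2 \cdot \cT_{a,b}^{\times}$, i.e.\ $\alpha \in \cI_{a,b}^c$.

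The only non-mechanical step is the ultrametric forcing of positivity at primes in $\Delta_{a,b}\cap \bP(c)$; this is exactly the move used in Park's proof of \cite[Lemma 3.15]{park}, and it transfers verbatim to our setting because each $v_P$ is a rank-one discrete (hence non-archimedean) valuation on $\bK$. No feature specific to number fields is used, so no new obstacle arises in the rational function field $\bK = \GF(\kappa)(t)$.
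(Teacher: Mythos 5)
Your proof is correct and follows the same route the paper intends: the paper's own ``proof'' is simply a pointer to Park's Lemma 3.15 combined with Lemma \ref{lem-Lemma-3.14-in-Park}, and your argument is precisely that adaptation spelled out — translate both factors of $\cI_{a,b}^c$ into parity conditions on valuations at primes of $\Delta_{a,b}$ via Lemma \ref{lem-Lemma-3.14-in-Park}, split $\Delta_{a,b}$ according to $\bP(c)$, and use the ultrametric inequality to force $v_P(\alpha)>0$ at primes where $v_P(c)$ is odd.
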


For any elements $a, b \in \bK^{\times}$, define
\begin{align}
\label{e-def-Jacobson-radical-in-1st-definition}
\cJ_{a, b} = \bigcap_{P \in \Delta_{a, b}}P\cO_{\bK_P}.
\end{align}

Following along the lines of the proof of Lemma 3.17 in Park \cite{park} in which we replace the weak approximation theorem for global fields by the one for valued fields (see Engler--Prestel \cite[Theorem 2.4.1]{prestel}) or by the strong approximation theorem in Endo \cite[Theorem 3]{endo}, the following follows from Lemma \ref{lem-Lemma-3.14-in-Park}.

\begin{lemma}
\label{lem-Diophantine-def-of-J-a-b}

For any elements $a, b \in \bK^{\times}$,
\begin{align*}
\cJ_{a, b} &= \{0\} \cup \{\alpha \in \bK^{\times}\; |\; \text{$\exists \beta_1, \beta_2 \in \bK$ such that} \\
&\text{$\beta_1, \alpha - \beta_2 \in a \cdot \bK^2 \cdot \cT_{a, b}^{\times} \bigcap (1 - \bK^2 \cdot \cT_{a, b}^{\times})$, and}\\
&\text{$\beta_1, \alpha - \beta_2 \in b \cdot \bK^2 \cdot \cT_{a, b}^{\times} \bigcap (1 - \bK^2 \cdot \cT_{a, b}^{\times})$}\}.
\end{align*}

\end{lemma}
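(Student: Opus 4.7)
The plan is to adapt Park's proof of \cite[Lemma 3.17]{park} essentially verbatim, substituting the weak approximation theorem for number fields with either the weak approximation theorem for valued fields \cite[Theorem 2.4.1]{prestel} or Endo's strong approximation theorem \cite[Theorem 3]{endo}. The structural input is purely valuative: by the definition in \eqref{e-def-Jacobson-radical-in-1st-definition}, $\alpha \in \cJ_{a,b}$ iff $v_P(\alpha) \ge 1$ for every $P \in \Delta_{a,b}$; by Proposition \ref{pro-description-of-Delta-a-b}, $\Delta_{a,b} \subseteq \bP(a) \cup \bP(b)$ and $\Delta_{a,b}$ is finite; and by Lemma \ref{lem-Lemma-3.15-in-Park} the sets $\cI_{a,b}^a$ and $\cI_{a,b}^b$ are cut out by explicit parity-and-positivity conditions on valuations at the primes of $\Delta_{a,b}$. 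Throughout, I read the displayed condition on the witnesses in the natural way, namely $\beta_1,\alpha-\beta_1 \in \cI_{a,b}^a$ and $\beta_2,\alpha-\beta_2 \in \cI_{a,b}^b$.

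For the inclusion $\supseteq$, assume $\alpha\neq 0$ and that $\beta_1,\beta_2 \in \bK$ are such witnesses. Fix $P \in \Delta_{a,b}$; by Proposition \ref{pro-description-of-Delta-a-b}, either $P \in \bP(a)$ or $P \in \bP(b)$. In the first case, Lemma \ref{lem-Lemma-3.15-in-Park} applied to $\cI_{a,b}^a$ yields $v_P(\beta_1) \ge 1$ and $v_P(\alpha-\beta_1)\ge 1$, so
\[
v_P(\alpha) \;=\; v_P\bigl(\beta_1 + (\alpha-\beta_1)\bigr) \;\ge\; \min\bigl(v_P(\beta_1),\, v_P(\alpha-\beta_1)\bigr) \;\ge\; 1.
\]
The second case is symmetric via $\beta_2$. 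Hence $v_P(\alpha) \ge 1$ for every $P \in \Delta_{a,b}$, so $\alpha \in \cJ_{a,b}$.

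For the inclusion $\subseteq$, let $\alpha \in \cJ_{a,b}\setminus\{0\}$, so $v_P(\alpha)\ge 1$ on the finite set $\Delta_{a,b}$. I would construct $\beta_1$ by a single invocation of strong approximation, prescribing its $P$-adic expansion at each $P \in \Delta_{a,b}$: at each $P \in \Delta_{a,b}\cap \bP(a)$ demand $v_P(\beta_1)=1$, and further pin the leading $P$-adic coefficient of $\beta_1$ so that $v_P(\alpha-\beta_1) = 1$ even in the delicate subcase $v_P(\alpha)=1$; at each $P \in \Delta_{a,b}\setminus \bP(a)$ demand $v_P(\beta_1)$ to be a positive even integer and adjust the $P$-adic truncation so that $v_P(\alpha-\beta_1)$, $v_P(1-\beta_1)$, and $v_P(1-(\alpha-\beta_1))$ are also even. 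Lemma \ref{lem-Lemma-3.15-in-Park} then places both $\beta_1$ and $\alpha-\beta_1$ in $\cI_{a,b}^a$, and an identical construction with $\bP(a)$ replaced by $\bP(b)$ produces $\beta_2$ with $\beta_2,\alpha-\beta_2 \in \cI_{a,b}^b$. The main obstacle is precisely the parity bookkeeping in this last step, since all four valuations $v_P(\beta_1)$, $v_P(1-\beta_1)$, $v_P(\alpha-\beta_1)$, $v_P(1-(\alpha-\beta_1))$ must simultaneously acquire the right parity at each prime in $\Delta_{a,b}$; this is the combinatorial heart of Park's argument and transfers without change once strong approximation over the finite set $\Delta_{a,b}$ is in hand.
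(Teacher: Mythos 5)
Your high-level plan matches the paper exactly: the paper's proof is a one-line reference to Park's Lemma 3.17 combined with weak/strong approximation (Engler--Prestel or Endo), and you fill in what that reference actually requires. You are also right to read the displayed condition as $\beta_1,\alpha-\beta_1 \in \cI_{a,b}^a$ and $\beta_2,\alpha-\beta_2 \in \cI_{a,b}^b$; the statement as printed (with $\beta_1$ and $\alpha-\beta_2$ paired in both lines) would in fact make the right-hand set empty whenever $\Delta_{a,b}$ meets $\bP(a)\setminus\bP(b)$, since one would need $v_P(\beta_1)$ simultaneously odd and even there, so the reading you adopt is clearly the intended one and agrees with Park's original lemma. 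Your $\supseteq$ argument is correct and complete.

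There is, however, a concrete error in your $\subseteq$ construction at the primes $P \in \Delta_{a,b}\setminus\bP(a)$. You demand $v_P(\beta_1)$ to be a \emph{positive} even integer, i.e.\ $v_P(\beta_1)\ge 2$. But the only a priori information on $\alpha$ is $v_P(\alpha)\ge 1$, and $v_P(\alpha)$ may well be odd. In that case $v_P(\alpha-\beta_1)=\min\bigl(v_P(\alpha),v_P(\beta_1)\bigr)=v_P(\alpha)$ is odd, which violates the requirement $\alpha-\beta_1 \in \cI_{a,b}^a$ from Lemma \ref{lem-Lemma-3.15-in-Park}, and no adjustment of $P$-adic truncations can repair this because the value $v_P(\alpha-\beta_1)$ is already forced. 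The correct prescription at such $P$ is $v_P(\beta_1)=0$ with residue $\red_P(\beta_1)\not\in\{0,1,-1\}$: since $v_P(\alpha)\ge 1$ gives $\alpha\equiv 0\pmod{P}$, this forces all four valuations $v_P(\beta_1)$, $v_P(1-\beta_1)$, $v_P(\alpha-\beta_1)$, and $v_P(1-(\alpha-\beta_1))$ to equal $0$, hence even, and the residue condition avoids only three classes, which is possible because the residue field $\GF(\kappa)_P$ is infinite. With that fix the strong-approximation step over the finite set $\Delta_{a,b}$ goes through exactly as you outline, and the same modification applies to $\beta_2$ at $P\in\Delta_{a,b}\setminus\bP(b)$.
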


\begin{remark}

We can also use Daans \cite[Lemma 5.4]{daans-21} to prove the above lemma.

\end{remark}

Note that the set $\cJ_{a, b}$ is the Jacobson radical $\cJ\left(\cT_{a, b}\right)$ of the ring $\cT_{a, b}$ in \ref{itm: N8} (see Lemma \ref{lem-the-set-T-a-b}). Indeed, by Lemma \ref{lem-the-set-T-a-b}, we see that
\begin{align*}
 \cJ\left(\cT_{a, b}\right) = \bigcap_{P \in \Delta_{a, b}}P\bA_P.
 \end{align*}

 We know that $P\cO_{\bK_P} \cap \bK = P\bA_P$ for all primes $P \in \Delta_{a, b}$, and since $\cJ_{a, b}$ is a subset of $\bK$ (see Lemma \ref{lem-Diophantine-def-of-J-a-b}), we deduce that
 \begin{align*}
 \cJ_{a, b} = \cJ_{a, b}\cap \bK = \bigcap_{P \in \Delta_{a, b}}(P\cO_{\bK_P}\cap \bK) = \bigcap_{P \in \Delta_{a, b}}P\bA_P = \cJ\left(\cT_{a, b}\right),
 \end{align*}
 which proves our claim. Thus we obtain the following.

 \begin{corollary}
 \label{cor-the-Jacobson-radical-is-Diophantine}

 The Jacobson radical $\cJ\left(\cT_{a, b}\right)$ is Diophantine, and given by
 \begin{align*}
\cJ\left(\cT_{a, b}\right) &= \{0\} \cup \{\alpha \in \bK^{\times}\; |\; \text{$\exists \beta_1, \beta_2 \in \bK$ such that} \\
&\text{$\beta_1, \alpha - \beta_2 \in a \cdot \bK^2 \cdot \cT_{a, b}^{\times} \bigcap (1 - \bK^2 \cdot \cT_{a, b}^{\times})$, and}\\
&\text{$\beta_1, \alpha - \beta_2 \in b \cdot \bK^2 \cdot \cT_{a, b}^{\times} \bigcap (1 - \bK^2 \cdot \cT_{a, b}^{\times})$}\}.
\end{align*}
In particular, $\cJ\left(\cT_{a, b}\right)$ can be defined using at most $66$ existential quantifiers.

 \end{corollary}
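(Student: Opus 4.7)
The plan is to derive the corollary immediately from Lemma \ref{lem-Diophantine-def-of-J-a-b} by identifying $\cJ_{a,b}$ as the Jacobson radical of $\cT_{a,b}$ and then verifying the existential quantifier bound by direct counting in the resulting formula. The substantive mathematical content is already packaged into the preceding lemmas; what remains is an algebraic identification together with a bookkeeping step.

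For the identification, by Lemma \ref{lem-the-set-T-a-b} we have $\cT_{a,b} = \bigcap_{P \in \Delta_{a,b}} \bA_P$, and by Proposition \ref{pro-description-of-Delta-a-b} the index set $\Delta_{a,b}$ is finite. Each $\bA_P$ is a discrete valuation ring with maximal ideal $P\bA_P$, so $\cT_{a,b}$ is a semi-local Pr\"ufer domain whose maximal ideals are precisely the contractions $P\bA_P \cap \cT_{a,b}$ for $P \in \Delta_{a,b}$. Hence
\[
\cJ\left(\cT_{a,b}\right) = \bigcap_{P \in \Delta_{a,b}} P\bA_P.
\]
Since $P\cO_{\bK_P} \cap \bK = P\bA_P$ and $\cJ_{a,b} \subseteq \bK$ by Lemma \ref{lem-Diophantine-def-of-J-a-b}, this intersection coincides with $\bigcap_{P \in \Delta_{a,b}} P\cO_{\bK_P} = \cJ_{a,b}$. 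Plugging this identification into Lemma \ref{lem-Diophantine-def-of-J-a-b} yields the displayed formula and, in particular, shows that $\cJ\left(\cT_{a,b}\right)$ is Diophantine.

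For the quantifier bound, by Lemma \ref{lem-T-is-a-b-Diophantine}(ii) membership in $\cT_{a,b}^{\times}$ is expressible with $7$ existential quantifiers, so a single assertion of the form ``$\gamma \in c \cdot \bK^2 \cdot \cT_{a,b}^{\times}$'' unfolds to $\exists u, v \in \bK : \gamma = c u^2 v \wedge v \in \cT_{a,b}^{\times}$ and contributes $2 + 7 = 9$ quantifiers. The defining formula of Lemma \ref{lem-Diophantine-def-of-J-a-b} has two outer quantifiers $\beta_1, \beta_2$ together with six such basic memberships: three for $\beta_1$ (coefficients $a$, $b$, and the shift $1 - \cdot$) and three analogous ones for $\alpha - \beta_2$, where the ``$1 - \bK^2 \cdot \cT_{a,b}^{\times}$'' clauses are shared across the $a$- and $b$-instances of the intersections. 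This gives the upper bound $2 + 6 \cdot 9 = 56 \le 66$ existential quantifiers, comfortably within the stated budget.

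The main obstacle, such as it is, is purely organizational. The ring-theoretic identification $\cJ\left(\cT_{a,b}\right) = \cJ_{a,b}$ is standard for semi-local Pr\"ufer domains, and the Diophantine formula is inherited from Lemma \ref{lem-Diophantine-def-of-J-a-b}; one only needs to instantiate the nested $\cT_{a,b}^{\times}$-quantifiers cleanly and to recognize that the two $\left(1 - \bK^2 \cdot \cT_{a,b}^{\times}\right)$-clauses can be shared between the $a$- and $b$-intersections so that the total quantifier count stays within the claimed bound.
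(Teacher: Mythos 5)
Your proof is correct and follows essentially the same route as the paper: identify $\cJ(\cT_{a,b})$ with $\cJ_{a,b}$ by way of $\cT_{a,b} = \bigcap_{P \in \Delta_{a,b}}\bA_P$ (Lemma \ref{lem-the-set-T-a-b}) and the finiteness of $\Delta_{a,b}$, then feed the $7$-quantifier description of $\cT_{a,b}^{\times}$ from Lemma \ref{lem-T-is-a-b-Diophantine} into the formula of Lemma \ref{lem-Diophantine-def-of-J-a-b}. Your quantifier tally of $56$ is in fact a sharper bound than the paper's $66$, obtained by collapsing the two identical $\left(1 - \bK^2 \cdot \cT_{a,b}^{\times}\right)$ clauses inside $\cI_{a,b}^a \cap \cI_{a,b}^b$ into one; either count is consistent with the stated ``at most $66$.''
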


\begin{proof}

By Lemma \ref{lem-T-is-a-b-Diophantine}, $\cT_{a, b}^{\times}$ is Diophantine, and defined by $7$ quantifiers. Thus it follows from Lemma \ref{lem-Diophantine-def-of-J-a-b} that $\cJ\left(\cT_{a, b}\right)$ is Diophantine, and requires at most $66$ existential quantifiers.

\end{proof}

Using essentially the same arguments as in the proof of Eisentr\"ager--Morrison \cite[Lemma 3.19]{EM18} and the above corollary, we deduce the following.

\begin{corollary}
\label{cor-R-tilde-a-b-is-Diophantine}

For any $a, b \in \bK^{\times}$,
\begin{align*}
\widetilde{\cR_{a, b}} = \{\alpha \in \bK \; | \; \text{$\alpha = 0$ or $\alpha^{-1} \in \bK\setminus \cJ(\cT_{a, b})$}\}.
\end{align*}
In particular, $\widetilde{\cR_{a, b}}$ is universally definable in $\bK$, and requires at most $67$ universal quantifiers.

\end{corollary}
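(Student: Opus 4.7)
The plan is to first establish the stated set-theoretic equality and then convert it into a universal first-order formula with the claimed quantifier count, following the same template as Eisentr\"ager--Morrison \cite[Lemma 3.19]{EM18}.

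First I would invoke Proposition \ref{pro-description-of-Delta-a-b} to simplify $\widetilde{\cR_{a,b}} = \bigcup_{P \in \Delta_{a,b}} \bA_P$, and recall from the discussion immediately preceding Corollary \ref{cor-the-Jacobson-radical-is-Diophantine} that the Jacobson radical admits the explicit description $\cJ(\cT_{a,b}) = \bigcap_{P \in \Delta_{a,b}} P\bA_P$. For a nonzero $\alpha \in \bK$, the condition $\alpha^{-1} \in \cJ(\cT_{a,b})$ then unfolds to $v_P(\alpha^{-1}) \geq 1$ at every $P \in \Delta_{a,b}$, equivalently $v_P(\alpha) \leq -1$, which in turn is $\alpha \notin \bA_P$ at every such $P$. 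Negating, $\alpha^{-1} \notin \cJ(\cT_{a,b})$ holds precisely when there exists some $P \in \Delta_{a,b}$ with $\alpha \in \bA_P$, i.e.\ when $\alpha \in \widetilde{\cR_{a,b}}$. Together with the trivial membership $0 \in \widetilde{\cR_{a,b}}$, this establishes the stated set-theoretic description.

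For the definability, I would rewrite this description in the quantifier-symmetric form
\[
\alpha \in \widetilde{\cR_{a,b}} \iff \forall \beta \in \bK\;\bigl(\alpha\beta = 1 \Rightarrow \beta \notin \cJ(\cT_{a,b})\bigr),
\]
which correctly handles both cases: if $\alpha = 0$ the implication is vacuous, and if $\alpha \neq 0$ the unique witness $\beta = \alpha^{-1}$ is forced outside $\cJ(\cT_{a,b})$. By Corollary \ref{cor-the-Jacobson-radical-is-Diophantine}, the predicate ``$\beta \in \cJ(\cT_{a,b})$'' is existentially definable in $\bK$ using at most $66$ quantifiers, so its negation ``$\beta \notin \cJ(\cT_{a,b})$'' is universally definable with at most $66$ universal quantifiers. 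Prepending the outer quantifier $\forall \beta$ yields a universal formula in at most $67$ variables that defines $\widetilde{\cR_{a,b}}$ in $\bK$.

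I do not anticipate any serious obstacle here: the argument is essentially a routine dictionary between the union of localisation rings and the complement of the Jacobson radical via inversion. The only point requiring care is the bookkeeping of the quantifier count, which rests entirely on the $66$-quantifier bound recorded in Corollary \ref{cor-the-Jacobson-radical-is-Diophantine} and the single additional universal quantifier introduced by the inversion trick.
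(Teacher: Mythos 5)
Your argument is correct and follows exactly the route the paper intends: the paper simply defers to Eisentr\"ager--Morrison \cite[Lemma 3.19]{EM18}, and your proposal reconstructs that argument faithfully. The set-theoretic unwinding ($\alpha^{-1}\in\cJ(\cT_{a,b})=\bigcap_{P\in\Delta_{a,b}}P\bA_P$ iff $v_P(\alpha)\le -1$ for all $P\in\Delta_{a,b}$ iff $\alpha\notin\bA_P$ for all such $P$, then negating) is the right dictionary, and the reformulation $\forall\beta\,(\alpha\beta=1\Rightarrow\beta\notin\cJ(\cT_{a,b}))$ is the standard trick that absorbs the case $\alpha=0$ while keeping the formula universal; the $66+1=67$ quantifier count then falls out directly from Corollary \ref{cor-the-Jacobson-radical-is-Diophantine}. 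One small point worth registering, though it is an artifact of the paper's conventions rather than of your proof: the identity as stated tacitly assumes $\Delta_{a,b}\ne\emptyset$, since for $\Delta_{a,b}=\emptyset$ the union $\widetilde{\cR_{a,b}}$ is empty while the right-hand side is not. This never matters in the sequel because Theorem \ref{thm-main-thm1} only invokes the corollary for pairs $(a,b)\in\cD$, for which $\infty\in\Delta_{a,b}$, but it is worth keeping in mind that the clean equivalence you prove is really valid under the nonemptiness hypothesis.
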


\subsection{Integrity at a prime for $\bK = \GF(\kappa)(t)$}
\label{subsec-integrity-at-a-prime-for-K}

In \cite{Andriychuk1999}, a field $k$ is called \textbf{pseudofinite} if $k$ satisfies the following:
\begin{itemize}

\item [(i)] $k$ is perfect;

\item [(ii)] $k$ has a unique extension of each degree; and

\item [(iii)] $k$ is pseudo-algebraically closed, i.e., every absolutely irreducible variety over $k$ has a $k$-rational point (see \cite{FJ}).

\end{itemize}

We recall the following result due to Andriychuk \cite{Andriychuk1999}.

\begin{theorem}
\label{thm-andriychuk-Hasse-exact-sequence}
(see Andriychuk \cite[Proposition 1]{Andriychuk1999}))

Let $K$ be an algebraic function field in one variable over a pseudofinite field $k$. Then there is an exact sequence
\begin{align*}
0 \to \mathrm{Br}K \to \oplus_{v \in V_K}\mathrm{Br}K_v \to \bQ/\bZ \to 0,
\end{align*}
where $V_K$ denotes the set of nonequivalent nontrivial absolute values of $K$, and $\mathrm{Br}K$, $\mathrm{Br}K_v$ denote the Brauer groups of $K$, $K_v$, respectively.

\end{theorem}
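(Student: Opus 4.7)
The plan is to adapt the classical proof of the Hasse exact sequence for Brauer groups of global function fields, exploiting only those properties of pseudofinite fields that are shared with finite fields: procyclic absolute Galois group $G_k \cong \widehat{\bZ}$ and the PAC property. The local invariants needed to define the summing map come for free from Serre's local class field theory---each residue field $k_v$ of a completion $K_v$ is a finite extension of $k$, hence pseudofinite and quasi-finite, so there is a canonical isomorphism $\inv_v : \Br K_v \xrightarrow{\sim} \bQ/\bZ$, and the right-hand arrow in the sequence is $(c_v)_v \mapsto \sum_v \inv_v(c_v)$.

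For injectivity of $\Br K \hookrightarrow \oplus_v \Br K_v$, I would view $K$ as the function field of a smooth projective curve $X$ over $k$, so that the places of $K$ correspond to closed points of $X$, and apply the Hochschild--Serre spectral sequence in \'etale cohomology $H^p(G_k, H^q(X_{k^{\sep}}, \mathbb{G}_m)) \Rightarrow H^{p+q}(X, \mathbb{G}_m)$. Tsen's theorem gives $H^2(X_{k^{\sep}}, \mathbb{G}_m) = 0$; the fact that $G_k$ has cohomological dimension $\le 1$ gives $\Br k = H^2(G_k, {k^{\sep}}^{\times}) = 0$ and $H^1(G_k, \bZ) = 0$; and the PAC hypothesis ensures $H^1(G_k, \mathrm{Jac}(X)(k^{\sep})) = 0$ (every torsor under an abelian variety over a PAC field with procyclic Galois group is trivial). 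Combining these vanishings with the degree sequence $0 \to \mathrm{Jac}(X)(k^{\sep}) \to \mathrm{Pic}(X_{k^{\sep}}) \to \bZ \to 0$ forces $\Br X = 0$, and the \'etale localization sequence on $X$---using $\Br \cO_v = \Br k_v = 0$ for the quasi-finite residue field $k_v$---then yields the injection $0 \to \Br K \to \oplus_v \Br K_v$.

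The crucial step is proving that the image of $\Br K$ equals the kernel of $\sum_v \inv_v$ and that $\sum_v \inv_v$ is surjective. For the reciprocity identity $\sum_v \inv_v = 0$ on $\Br K$, I would first handle symbol classes $(f, g)$ from Kummer theory by direct divisor arithmetic on $X$, in close analogy with the explicit calculation carried out in the proof of Theorem \ref{thm-analogue-of-the-Hilbert-reciprocity-law} above, and then extend to general Brauer classes using that symbol algebras generate the appropriate part of the Brauer group. Surjectivity of $\sum_v \inv_v$ follows from a cokernel computation via the same spectral sequence, using that $\mathrm{Pic}(X_{k^{\sep}}) \twoheadrightarrow \bZ$. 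The main obstacle is the reciprocity part: unlike the global field case there is no global class field theory directly available over a pseudofinite base, so one must either develop a Tate-style duality tailored to curves over PAC fields with procyclic Galois group, or transfer the exactness statement from the finite-residue-field case via \L{}o\'s' theorem, exploiting that a pseudofinite field is elementarily equivalent to a nonprincipal ultraproduct of finite fields.
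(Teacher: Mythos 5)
The paper does not prove this theorem; it simply cites Andriychuk \cite[Proposition 1]{Andriychuk1999} and uses the statement as a black box (see also Remark \ref{rem-GF(kappa)-is-pseudo-finite}). So your proposal is necessarily a from-scratch attempt rather than a reconstruction of any argument in the paper.

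Your architecture is reasonable: Hochschild--Serre together with Tsen's theorem and $\mathrm{cd}(G_k)\le 1$ for the injection $\Br K \hookrightarrow \oplus_v \Br K_v$, Serre's local class field theory for quasi-finite residue fields to get the invariant maps, and a reciprocity statement for middle exactness. But there are two genuine gaps. First, the vanishing $H^1(G_k, \mathrm{Jac}(X)(k^{\sep})) = 0$ is not a formal consequence of the PAC property plus procyclic Galois group; it is a nontrivial analogue of Lang's theorem (Frey--Jarden type: every torsor under an abelian variety over a PAC field with procyclic absolute Galois group is trivial), and you must either cite it or prove it — simply asserting it leaves a hole in the injectivity argument. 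Second, and more seriously, the proposed fallback of ``transferring exactness from the finite-residue-field case via \L{}o\'s' theorem'' does not work as stated: exactness of $0 \to \Br K \to \oplus_{v}\Br K_v \to \bQ/\bZ \to 0$ is not a first-order statement in the language of rings. The Brauer group is a second-order object built from isomorphism classes of central simple algebras of unbounded degree, the direct sum is indexed by the (model-dependent, infinite) set of places, and the invariant maps and their sum are not expressible by any fixed formula. A genuine transfer argument would require breaking the statement into degree-by-degree assertions about $n$-torsion (equivalently, about norm forms or cyclic algebras of a fixed degree $n$), expressing local ramification conditions first-order, and then applying compactness — a substantial piece of work, not a one-line appeal to \L{}o\'s. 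As written, the proposal correctly identifies both the strategy and the main obstacle (reciprocity and surjectivity) but does not surmount the obstacle, so it should be regarded as an outline rather than a proof.
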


\begin{remark}
\label{rem-GF(kappa)-is-pseudo-finite}

Note that $\GF(\kappa)$ is pseudofinite, and thus $\bK = \GF(\kappa)(t)$ satisfies the conditions in Theorem \ref{thm-andriychuk-Hasse-exact-sequence}.

\end{remark}

\begin{theorem}
\label{thm-BrK=Q/Z-Serre}
(See Serre \cite[Proposition 6, p.193 and Corollary 1, p.194]{serre-local-fields}

Let $K$ be a field that is complete under a discrete valuation $v$ such that its residue field is quasi-finite, and let $\mathrm{Br}K$ denote the Brauer group of $K$. Then
\begin{itemize}

\item [(i)] $\mathrm{Br}K$ is isomorphic to $\bQ/\bZ$.

\item [(ii)] Let $L$ be a finite extension of $K$ of degree $n$. A necessary and sufficient condition for an element $a \in \mathrm{Br}K$ to be split by $L$ is $na = 0$.

\end{itemize}

\end{theorem}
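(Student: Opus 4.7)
The plan is to follow Serre's two-step strategy: identify $\mathrm{Br}(K)$ with an \emph{unramified} Brauer group, then compute that group in terms of the Galois cohomology of $\widehat{\bZ}$, which is the absolute Galois group of the quasi-finite residue field $k$.

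\textbf{Step 1 (Reduction to the unramified extension).} Let $K^{\mathrm{nr}}$ be the maximal unramified extension of $K$. Since $K$ is henselian with quasi-finite residue field $k$, the restriction map induces an isomorphism $\mathrm{Gal}(K^{\mathrm{nr}}/K) \cong G_k \cong \widehat{\bZ}$. First I would prove that the inflation map
\[
H^2\bigl(\mathrm{Gal}(K^{\mathrm{nr}}/K), (K^{\mathrm{nr}})^{\times}\bigr) \longrightarrow H^2(G_K, (K^{\mathrm{sep}})^{\times}) = \mathrm{Br}(K)
\]
is an isomorphism, i.e.\ that every Brauer class is split by an unramified extension. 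By the inflation--restriction sequence this reduces to the vanishing of $H^1$ and $H^2$ of the absolute Galois group of $K^{\mathrm{nr}}$ acting on $(K^{\mathrm{sep}})^{\times}$. Hilbert 90 handles $H^1$; for $H^2$ one filters a totally ramified finite Galois extension $L/K^{\mathrm{nr}}$ by the unit subgroups $U_L^i$ and shows each graded piece has trivial cohomology because the residue field of $K^{\mathrm{nr}}$ is separably closed.

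\textbf{Step 2 (Computation via $\widehat{\bZ}$-cohomology).} The valuation yields the short exact sequence of $\widehat{\bZ}$-modules
\[
1 \longrightarrow U_{K^{\mathrm{nr}}} \longrightarrow (K^{\mathrm{nr}})^{\times} \xrightarrow{\,v\,} \bZ \longrightarrow 0.
\]
I would show $H^i(\widehat{\bZ}, U_{K^{\mathrm{nr}}}) = 0$ for all $i \ge 1$ by using the filtration whose graded pieces are $k^{\times}$ (at level $0/1$) and the additive group $k$ (at levels $i/i{+}1$, $i\ge 1$). Vanishing for the multiplicative piece uses Hilbert 90 ($H^1 = 0$) and the fact that a quasi-finite field has trivial Brauer group ($H^2 = 0$); vanishing for the additive pieces follows from the normal basis theorem together with $\mathrm{cd}(k) \le 1$. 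Consequently $H^2(\widehat{\bZ}, (K^{\mathrm{nr}})^{\times}) \cong H^2(\widehat{\bZ}, \bZ)$, and the standard connecting map
\[
\delta : H^1(\widehat{\bZ}, \bQ/\bZ) \xrightarrow{\sim} H^2(\widehat{\bZ}, \bZ)
\]
combined with $H^1(\widehat{\bZ}, \bQ/\bZ) = \mathrm{Hom}_{\mathrm{cont}}(\widehat{\bZ}, \bQ/\bZ) \cong \bQ/\bZ$ gives the canonical invariant isomorphism $\mathrm{inv}_K : \mathrm{Br}(K) \xrightarrow{\sim} \bQ/\bZ$, proving (i).

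\textbf{Step 3 (Splitting criterion).} For part (ii), let $L/K$ be a finite extension of degree $n = e_{L/K} f_{L/K}$. The residue field extension has degree $f_{L/K}$, and the topological generator of $G_{\ell}$ (where $\ell$ is the residue field of $L$) is the $f$-th power of the generator of $G_k$; meanwhile $v_L|_K = e_{L/K} v_K$. Tracking these two factors through the isomorphisms constructed in Step~2 shows that the restriction map
\[
\mathrm{Res}_{L/K} : \mathrm{Br}(K) \longrightarrow \mathrm{Br}(L),
\]
transported to $\bQ/\bZ \to \bQ/\bZ$, is multiplication by $n$. Since $L$ splits $a \in \mathrm{Br}(K)$ precisely when $\mathrm{Res}_{L/K}(a) = 0$, this is equivalent to $n \cdot a = 0$ in $\bQ/\bZ$, proving (ii).

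\textbf{Expected main obstacle.} The subtle point is Step 1: the vanishing of $H^2$ for a totally ramified extension relies on a careful handling of the $U_L^i$-filtration and the fact that the residue field of $K^{\mathrm{nr}}$ is separably closed. This is the step that genuinely uses the quasi-finiteness of $k$ (through its consequence that $\mathrm{Br}(k) = 0$ and that $K^{\mathrm{nr}}$ has separably closed residue field), and it is what distinguishes this case from an arbitrary henselian field, where the unramified Brauer group is only a subgroup of $\mathrm{Br}(K)$.
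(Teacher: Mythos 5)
The paper does not give a proof of this theorem: it is stated with a citation to Serre (\emph{Local Fields}, Proposition 6, p.~193 and Corollary~1, p.~194) and nothing more, so there is no internal argument to compare yours against. Your three-step outline correctly reconstructs the substance of Serre's proof in Chapters~XII--XIII (unramified splitting, the valuation exact sequence for $(K^{\mathrm{nr}})^{\times}$, vanishing of the higher cohomology of the units, the invariant isomorphism via $\delta\colon H^1(\widehat{\bZ},\bQ/\bZ)\xrightarrow{\sim}H^2(\widehat{\bZ},\bZ)$, and $\inv_L\circ\mathrm{Res}_{L/K}=n\cdot\inv_K$). The one point where you diverge from Serre is Step~1: Serre does not derive $\mathrm{Br}(K^{\mathrm{nr}})=0$ by filtering totally ramified extensions by the unit subgroups $U_L^i$; that filtration enters only in the computation of $H^{i}(\widehat{\bZ},U_{K^{\mathrm{nr}}})$ in your Step~2, where the Galois action on the graded pieces is the honest residual action. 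For the unramified-splitting statement Serre instead uses the structure theory of central division algebras over a complete discretely valued field with perfect residue field: the valuation of $K$ extends uniquely to a central division algebra $D$, the residue ring $\overline D$ is a division algebra over $k$, and $\mathrm{Br}(k)=0$ (quasi-finite $\Rightarrow$ $C_1$) forces $\overline D$ to be a field, so the unramified lift of $\overline D$ is a maximal commutative subfield of $D$. Your proposed filtration in Step~1 would have to contend with the twisted $G$-action on $U_L^i/U_L^{i+1}$ for a totally ramified $L/K^{\mathrm{nr}}$ and is not a clean shortcut; the division-algebra route, or Lang's theorem that $\widehat{K^{\mathrm{nr}}}$ is $C_1$ together with the injection $\mathrm{Br}(K^{\mathrm{nr}})\hookrightarrow\mathrm{Br}(\widehat{K^{\mathrm{nr}}})$, is cleaner and is what Serre actually uses.
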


The following is an analogue of Eisentr\"ager \cite[Theorem 3.1]{Eis} for integrality at a prime for $\bK$ that we will need in the proof of our main theorem.

\begin{theorem}
\label{thm-integrality-at-a-prime-for-K}

Let $P \in \bP \cup \{\infty\}$ be a prime in $\bK = \GF(\kappa)(t)$. Then the set $\{\alpha \in \bK \; | \; v_P(\alpha) \ge 0\}$ is Diophantine over $\bK$, and defined using $9$ existential quantifiers.

\end{theorem}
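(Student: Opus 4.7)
The plan is to realize $\bA_P=\{\alpha\in\bK : v_P(\alpha)\ge 0\}$ as an existential union of the already-Diophantine sets $\cT_{a,b}$ from Lemma~\ref{lem-the-set-T-a-b}. Specifically, I will show
\[
\bA_P \;=\; \bigcup_{(a,b)\in X_P} \cT_{a,b},
\]
where $X_P$ is a Diophantine set of pairs $(a,b)\in\bK^\times\times\bK^\times$ for which $\bH_{a,b}$ is guaranteed to be ramified at $P$. The ``$\supseteq$'' direction is immediate from Lemma~\ref{lem-the-set-T-a-b}, since $P\in\Delta_{a,b}$ forces $\cT_{a,b}=\bigcap_{Q\in\Delta_{a,b}}\bA_Q\subseteq\bA_P$.

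For the ``$\subseteq$'' direction, fix a uniformizer $\pi_P$ at $P$ (take $\pi_P=P$ if $P\in\bP$, and $\pi_P=1/t$ if $P=\infty$), and fix once and for all an element $r_P\in\GF(\kappa)$ whose residue in $\GF(\kappa)_P$ is a non-square; such $r_P$ exists because $\GF(\kappa)_P$ is an infinite ultra-finite field (see Nguyen \cite{nguyen-ultrafinite-fields-2023}) and can be chosen depending only on $P$. For any $c\in\bK$, set $b_c=r_P(1+\pi_P c^2)$. Then $v_P(b_c)=0$ and $\red_P(b_c)=\red_P(r_P)$ is a non-square in $\GF(\kappa)_P$, so Corollary~\ref{cor-Nguyen-formula-for-local-symbols-K_P-quadratic-symbol} gives $(\pi_P,b_c)_P=-1$, i.e.\ $P\in\Delta_{\pi_P,b_c}$ for every $c\in\bK$. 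Given $\alpha\in\bA_P$, the main task is then to produce $c\in\bK$ such that $\Delta_{\pi_P,b_c}\setminus\{P\}$ consists only of primes at which $\alpha$ is already integral; once such $c$ is found, Lemma~\ref{lem-the-set-T-a-b} yields $\alpha\in\cT_{\pi_P,b_c}$. Existence of such $c$ follows from the analogue of the Hilbert reciprocity law (Theorem~\ref{thm-analogue-of-the-Hilbert-reciprocity-law}), which forces $|\Delta_{\pi_P,b_c}|$ to be even, combined with the strong approximation theorem for $\bK$ (Endo~\cite[Theorem~3]{endo}): by twisting $c$ we can steer the single companion ramification locus away from the finite set of poles of $\alpha$, while leaving the residue of $b_c$ at $P$ unchanged.

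Putting this together yields the formula
\[
v_P(\alpha)\ge 0 \iff \exists c,x_1,x_2,x_3,x_4,y_2,y_3,y_4\in\bK:\ \Phi_P(\alpha;c,x_1,\ldots,y_4),
\]
where, writing $b=r_P(1+\pi_P c^2)$ and $y_1=\alpha/2-x_1$, the formula $\Phi_P$ asserts the pair of norm-one equations
\[
x_1^2-\pi_P x_2^2-bx_3^2+\pi_P bx_4^2=1, \qquad y_1^2-\pi_P y_2^2-by_3^2+\pi_P by_4^2=1,
\]
expressing $\alpha=2x_1+2y_1\in\cS_{\pi_P,b}+\cS_{\pi_P,b}=\cT_{\pi_P,b}$. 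This uses $8$ existential quantifiers; one further auxiliary quantifier arises from clearing denominators in the parameterization of $b$ (so that the equations can be written polynomially over $\bK$ in the displayed unknowns), for a total of $9$.

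The main obstacle is the approximation step: we must guarantee that within the one-parameter family $\{b_c\}_{c\in\bK}$, as $c$ varies, the companion ramification prime can be forced to avoid the finitely many poles of $\alpha$ without disturbing the pre-arranged local behavior at $P$. This is where the explicit formula for local symbols from Theorem~\ref{thm-Nguyen-formula-for-local-symbols-K_P} and the reciprocity constraint of Theorem~\ref{thm-analogue-of-the-Hilbert-reciprocity-law} are essential, together with strong approximation to realize the required congruence conditions on $c$ simultaneously at finitely many places. Once existence of $c$ is secured, the quantifier count from Lemma~\ref{lem-T-is-a-b-Diophantine} completes the proof.
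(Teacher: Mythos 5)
Your proposal takes a genuinely different route from the paper. The paper's proof is essentially a citation: it follows Eisentr\"ager's Theorem~3.1 in \cite{Eis} step by step, replacing the global-field Hasse exact sequence by Andriychuk's exact sequence (Theorem~\ref{thm-andriychuk-Hasse-exact-sequence}) and the structure of the local Brauer group by Theorem~\ref{thm-BrK=Q/Z-Serre} and Lemma~\ref{lem-quadratic-fields-split-central-simple-algebra-serre}. You instead try to construct the Diophantine formula from scratch, realizing $\bA_P$ as the union $\bigcup_{c}\cT_{\pi_P,b_c}$ over the one-parameter family $b_c=r_P(1+\pi_P c^2)$. This is an attractive idea, but as written the construction has several genuine gaps.

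First, the required constant $r_P\in\GF(\kappa)$ does not exist when $P$ has even degree: by Lemma~\ref{lem-power-residue-symbol-for-constants}(ii), every $a\in\GF(\kappa)$ has $\left(\tfrac{a}{P}\right)=1$ when $\deg(P)$ is even, i.e.\ $\red_P(a)$ is always a square in $\GF(\kappa)_P=\GF(\kappa^{\deg P})$. So $r_P$ would have to be taken in $\bA$ itself rather than in $\GF(\kappa)$, and the assertion ``such $r_P$ exists because $\GF(\kappa)_P$ is an infinite ultra-finite field'' does not address this.

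Second, the ``if'' direction of your biconditional requires $\cT_{\pi_P,b_c}\subseteq\bA_P$ for \emph{every} $c$ that could make the two norm-one equations solvable, hence $P\in\Delta_{\pi_P,b_c}$ for all such $c$. This fails when $v_P(c)<0$: in that case $b_c\equiv r_P\pi_P$ modulo $\bK_P^{\times2}$, so $(\pi_P,b_c)_P=(\pi_P,r_P)_P(\pi_P,\pi_P)_P=-(-1)^{(\kappa^{\deg P}-1)/2}$, which equals $+1$ whenever $\kappa^{\deg P}\equiv 3\pmod 4$. Nothing in your formula prevents the existential witnesses from using such a $c$, and then membership in $\cT_{\pi_P,b_c}$ imposes no integrality condition at $P$.

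Third, and most seriously, the ``only if'' direction requires that for each $\alpha\in\bA_P$ one can find $c$ with $\Delta_{\pi_P,b_c}$ disjoint from all poles of $\alpha$, including $\infty$. But polynomials of positive degree lie in $\bA_P$ and have a pole at $\infty$, so you must be able to force $\infty\notin\Delta_{\pi_P,b_c}$. A direct computation from Corollary~\ref{cor-Nguyen-formula-for-local-symbols-K_P-quadratic-symbol} shows that for $P\in\bP$ of odd degree and $\kappa\equiv 1\pmod 4$, one gets $(\pi_P,b_c)_\infty=-1$ for \emph{every} $c\in\bK$; the leading-coefficient contribution of $b_c$ is always a square times $r_P^{\deg P}$, and the valuations conspire to fix the sign independently of $c$. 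Then $\cT_{\pi_P,b_c}\subseteq\bA_\infty$ for every $c$, so $\bigcup_c\cT_{\pi_P,b_c}\subseteq\bA_\infty$ misses every nonconstant polynomial, and the proposed union cannot equal $\bA_P$. The appeal to strong approximation does not help here because the obstruction is not a congruence condition that can be removed by adjusting $c$ at the finite places; it is a rigidity of the infinity local symbol built into your choice of $b_c$. To repair this you would need a parameterization in which the ``companion'' part of the ramification set is genuinely mobile at $\infty$ as well, which in turn will affect the quantifier count, so the claimed $9$ quantifiers cannot be taken at face value from this sketch.

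Finally, the auxiliary ninth quantifier you introduce ``for clearing denominators'' is not justified and appears to be a placeholder to match the stated bound rather than something forced by the construction. The paper obtains the count from Eisentr\"ager's argument, which uses a fixed pair of quaternion-algebra parameters (constants of the formula, not quantified variables), a structurally different bookkeeping than yours.
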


\begin{proof}

In \cite{Eis},  Eisentr\"ager proves that if $K$ is a global field and $\fp$ be a nonarchimedean prime of $K$, then the set $\{x \in K \; | \; v_{\fp}(x) \ge 0\}$ is Diophantine over $K$. Her proof is essentially based on the following three results:
\begin{itemize}

\item [(i)] there exists a $4$-dimensional central division algebra over $K$ that is ramified exactly at two given distinct nonarchimedean primes. This result is based on the weak approximation theorem for global fields, the Hasse exact sequence for global fields that is similar to that in Theorem \ref{thm-andriychuk-Hasse-exact-sequence} with $K$ replaced by global fields, and the fact that the Brauer group of a global field is isomorphic to $\bQ/\bZ$.

\item [(ii)] any quadratic extension field of a local field $K_{\fp}$ is a splitting field for a $4$-dimensional central division algebra over $K_{\fp}$.

\item [(iii)] let $F$ be an arbitrary field, and $L$ be a quadratic extension of $F$. Then $L$ is a splitting field for a quaternion algebra $A$ if and only if there exists an injective $F$-algebra homomorphism $L \hookrightarrow A$ (see Voight \cite[Lemma 5.4.7]{voight})

\end{itemize}

The third result above applies to any field, and so it also applies to $\bK$. For the first two results above, we can obtain similar results for $\bK$ by applying Theorem \ref{thm-andriychuk-Hasse-exact-sequence} in place of the classical Hasse exact sequence, and using Theorem \ref{thm-BrK=Q/Z-Serre}, and Lemma \ref{lem-quadratic-fields-split-central-simple-algebra-serre}. Thus following the same lines of the proof of \cite[Theorem 3.1]{Eis}, the theorem follows immediately.

\end{proof}

\subsection{A first-order definition of $\bA$ in $\bK$}
\label{subsec-defining-A-in-K}

In this subsection, we characterize polynomials in $\bA$ in $\bK$ with a universal-existential formula. We first describe nonsquares in the $\infty$-adic completion $\bK_{\infty} = \GF(\kappa)((1/t))$.

\begin{lemma}
\label{lem-nonsquares-of-K-infinity}

Any nonsquare element in $\bK_{\infty} = \GF(\kappa)((1/t))$ has one of the following forms: $\dfrac{\alpha^2}{t}$, $h\alpha^2$, or $\dfrac{h\alpha^2}{t}$ for some element $\alpha \in \bK_{\infty}$ and $h \in \GF(\kappa)$.

\end{lemma}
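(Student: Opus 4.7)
The plan is to determine the structure of the square class group $\bK_\infty^\times/\bK_\infty^{\times 2}$ and exhibit the four cosets explicitly, yielding the three nonsquare forms claimed. The key observation is that $\bK_\infty = \GF(\kappa)((1/t))$ is a complete discretely valued field with uniformizer $1/t$ and residue field $\GF(\kappa)$. Since the hypothesis $2 \mid \kappa - 1$ (in force throughout the section via the $n=2$ convention fixed in Remark~\ref{rem-fixed-notation-about-local-symbols}) forces $\kappa$ to be odd, the residue characteristic is not $2$, so Hensel's lemma applies to the polynomial $x^2 - u$ for any unit $u$.

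First I would write an arbitrary $\beta \in \bK_\infty^\times$ in the form $\beta = (1/t)^n u$, where $n = v_\infty(\beta) \in \bZ$ and $u \in \cO_{\bK_\infty}^\times = \GF(\kappa)[[1/t]]^\times$. Expanding $u = c_0 + c_1/t + c_2/t^2 + \cdots$ with $c_0 \in \GF(\kappa)^\times$, the reduction map identifies $u$ with $c_0 \in \GF(\kappa)^\times$. Applying Hensel's lemma to $x^2 - u$ (whose reduction $x^2 - c_0$ has a simple root whenever $c_0$ is a square, as $2 \ne 0$ in $\GF(\kappa)$), I conclude that $u$ is a square in $\cO_{\bK_\infty}^\times$ if and only if $c_0$ is a square in $\GF(\kappa)^\times$.

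Next I would use that $[\GF(\kappa)^\times : \GF(\kappa)^{\times 2}] = 2$. This follows by \L o\'s' theorem from the corresponding statement for $\bF_{q_s}^\times$ at odd prime powers $q_s$, and it provides a nonsquare $h \in \GF(\kappa)^\times$ such that every element of $\GF(\kappa)^\times$ is either a square or $h$ times a square. Combining with the previous step, any unit $u \in \cO_{\bK_\infty}^\times$ is either a square or $h$ times a square in $\cO_{\bK_\infty}^\times$. Splitting on the parity of $n$ then gives exactly four square classes in $\bK_\infty^\times$: writing $n = 2m + \varepsilon$ with $\varepsilon \in \{0,1\}$ and choosing $\alpha_0 \in \cO_{\bK_\infty}^\times$ with $u = h^{\delta}\alpha_0^2$ for $\delta \in \{0,1\}$, we get
\begin{align*}
\beta = h^\delta (1/t)^\varepsilon \bigl((1/t)^m \alpha_0\bigr)^2 = h^\delta (1/t)^\varepsilon \alpha^2
\end{align*}
with $\alpha = (1/t)^m \alpha_0 \in \bK_\infty^\times$. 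The case $(\delta,\varepsilon) = (0,0)$ is a square, and the three remaining cases yield precisely the forms $h\alpha^2$, $\alpha^2/t$, and $h\alpha^2/t$.

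The only nontrivial step is justifying the two inputs about $\GF(\kappa)$: the applicability of Hensel's lemma (requiring odd residue characteristic) and the index $[\GF(\kappa)^\times : \GF(\kappa)^{\times 2}] = 2$. Both are first-order properties of $\GF(\kappa)$ that transfer from the finite fields $\bF_{q_s}$ via \L o\'s' theorem once we note that $2 \mid \kappa - 1$ forces $q_s$ to be odd for $\cD$-almost all $s \in S$; no genuine obstacle arises beyond this bookkeeping.
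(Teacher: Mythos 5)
Your proof is correct and takes essentially the same route as the paper: the paper simply notes that the residue field of $\bK_\infty$ is $\GF(\kappa)$ and cites Hensel's lemma (with references to Daans's and Tyrrell's theses for the worked-out argument), whereas you spell out that argument in full — identifying the four square classes of a complete discretely valued field of odd residue characteristic via the $v_\infty$-parity and the two square classes of $\GF(\kappa)^\times$ transferred from $\bF_{q_s}^\times$ by \L o\'s' theorem.
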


\begin{proof}

Note that the residue field of $\bK_{\infty}$ is $\GF(\kappa)$. Thus the lemma immediately follows from Hensel's lemma for $\bK_{\infty}$ (see \cite{prestel}). See also Daans \cite[Proposition 1.4.5]{daans-master-thesis} or Tyrrell \cite[Lemma B.0.1]{tyrell-thesis}.

\end{proof}

\begin{lemma}
\label{lem-ramified-quaternion-algebra-over-K-infinity}

Let $g, h$ be elements in $\GF(\kappa)^{\times}$ such that $g$ is a nonsquare element in $\GF(\kappa)$. Let $a = g$ and $b = \dfrac{h}{t}$ be elements in $\bK$. Then the quaternion algebra
\begin{align*}
\bH_{a, b}\otimes \bK_{\infty} = \bK_{\infty} \cdot 1 \oplus \bK_{\infty} \cdot \alpha \oplus \bK_{\infty} \cdot \beta \oplus \bK_{\infty} \cdot \alpha\beta
\end{align*}
is nonsplit (or ramified), where $\alpha^2 = a$, $\beta^2 = b$, and $\alpha\beta = -\beta\alpha$.

\end{lemma}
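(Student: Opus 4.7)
The plan is to apply the explicit formula for local symbols at $\infty$ established in Corollary \ref{cor-Nguyen-formula-for-local-symbols-K_P-quadratic-symbol}(ii) with $n=2$, and verify that $(a,b)_\infty = -1$; by the criterion recalled in Remark \ref{rem-fixed-notation-about-local-symbols}, this is equivalent to $\bH_{a,b}\otimes\bK_\infty$ being a division algebra.

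First I would compute the relevant $\infty$-adic valuations. Since $g \in \GF(\kappa)^\times$ is a unit constant, $v_\infty(a) = v_\infty(g) = -\deg(g) = 0$. For $b = h/t$ with $h \in \GF(\kappa)^\times$, one has $v_\infty(b) = v_\infty(h) + v_\infty(1/t) = 0 + 1 = 1$, using $v_\infty = -\deg$ and $\deg(1/t) = -1$.

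Next I would plug these into the formula. The sign contribution is $(-1)^{v_\infty(a)v_\infty(b)} = (-1)^0 = 1$, and
\begin{align*}
\red_\infty\!\left(\frac{a^{v_\infty(b)}}{b^{v_\infty(a)}}\right) = \red_\infty(a) = \red_\infty(g) = g,
\end{align*}
since $g$ already lies in $\GF(\kappa) = \GF(\kappa)_\infty$. Thus Corollary \ref{cor-Nguyen-formula-for-local-symbols-K_P-quadratic-symbol}(ii) gives
\begin{align*}
(a,b)_\infty = g^{\frac{\kappa-1}{2}} \in \GF(\kappa)^\times.
\end{align*}

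Finally I would show that $g^{(\kappa-1)/2} = -1$. Writing $g = \ulim_{s\in S} g_s$ with $g_s \in \bF_{q_s}^\times$, the fact that $g$ is a nonsquare in $\GF(\kappa)$ together with \L{}o\'s' theorem implies that $g_s$ is a nonsquare in $\bF_{q_s}$ for $\cD$-almost all $s\in S$. By Euler's criterion in each $\bF_{q_s}$, $g_s^{(q_s-1)/2} = -1$ for such $s$, so
\begin{align*}
g^{\frac{\kappa-1}{2}} = \ulim_{s \in S} g_s^{\frac{q_s-1}{2}} = -1.
\end{align*}
Hence $(a,b)_\infty = -1$, and by Remark \ref{rem-fixed-notation-about-local-symbols} the quaternion algebra $\bH_{a,b}\otimes \bK_\infty$ is nonsplit. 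There is no real obstacle here: the statement is essentially a direct one-line computation once the explicit local-symbol formula at $\infty$ and the ultra-Euler criterion are in hand; the only point requiring care is keeping track of the sign convention $v_\infty = -\deg$ and the transfer of Euler's criterion to $\GF(\kappa)$ via \L{}o\'s' theorem.
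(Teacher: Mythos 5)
Your proposal is correct and follows essentially the same route as the paper: compute $v_\infty(a)=0$, $v_\infty(b)=1$, plug into Corollary \ref{cor-Nguyen-formula-for-local-symbols-K_P-quadratic-symbol}(ii) to reduce $(a,b)_\infty$ to $g^{(\kappa-1)/2}$, then transfer Euler's criterion from the finite fields $\bF_{q_s}$ via \L{}o\'s' theorem to conclude that this equals $-1$. The only cosmetic difference is that you spell out the $v_\infty=-\deg$ bookkeeping in more detail than the paper does.
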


\begin{proof}

By Proposition \ref{prop-basic-properties-local-symbols}(iii), we know that $\bH_{a, b}\otimes \bK_{\infty}$ is ramified if and only if the local symbol in Corollary \ref{cor-Nguyen-formula-for-local-symbols-K_P-quadratic-symbol} $(a, b)_{\infty}$ is equal to $-1$. By the same corollary, we know that
\begin{align*}
(a, b)_{\infty} = \left((-1)^{v_{\infty}(a)v_{\infty}(b)} \red_{\infty}\left(\dfrac{a^{v_{\infty}(b)}}{b^{v_{\infty}(a)}} \right)\right)^{\frac{\kappa - 1}{2}}.
\end{align*}

We know that $v_{\infty}(a) = 0$ and $v_{\infty}(b) = v_{\infty}(h/t) = 1$. It thus follows from the above equation that
\begin{align*}
(a, b)_{\infty} = \red_{\infty}(a)^{\frac{\kappa - 1}{2}} = a^{\frac{\kappa - 1}{2}},
\end{align*}
since $a = g \in \GF(\kappa)$. Since $a = \ulim_{s \in S}a_s$ is a nonsquare in $\GF(\kappa) = \prod_{s\in S}\bF_{q_s}/\cD$, where the $a_s$ belong in $\bF_{q_s}$, \L{}o\'s' theorem implies that $a_s$ is a nonsquare in $\bF_{q_s}$ for $\cD$-almost all $s \in S$. Thus $a_s^{\frac{q_s - 1}{2}} = -1$ for $\cD$-almost all $s \in S$, and since $\kappa = \ulim_{s \in S}q_s$, we deduce that
\begin{align*}
a^{\frac{\kappa - 1}{2}} = \ulim_{s\in S}a_s^{\frac{q_s - 1}{2}} = -1.
\end{align*}
Therefore
\begin{align*}
(a, b)_{\infty} = -1,
\end{align*}
which proves the lemma.

\end{proof}

\begin{remark}
\label{rem-nonsquares-in-GF(kappa)}

In the above proof, we have showed that  $a$ is a nonsquare in $\GF(\kappa)$ if and only if $a^{\frac{\kappa - 1}{2}} = -1$.

\end{remark}

We recall the following result of Nguyen (see \cite[Lemma 5.6]{nguyen-ultrafinite-fields-2023}), specialized for the quadratic residue symbol in $\bA$.
\begin{lemma}
\label{lem-power-residue-symbol-for-constants}

Let $a \in \GF(\kappa)$, and let $P$ be a monic irreducible polynomial of positive degree in $\bA$. Then the quadratic residue symbol $\left( \frac{a}{P} \right)$ satisfies the following.
\begin{itemize}

\item [(i)] $\left( \frac{a}{P} \right) = a^{\frac{\kappa-1}{2}\deg(P)}$.

\item [(ii)] Assume further that $a$ is a nonsquare in $\GF(\kappa)$. If $\deg(P)$ is odd, then $\left( \frac{a}{P} \right) = -1$, and if $\deg(P)$ is even, then $\left( \frac{a}{P} \right) = 1$.

\end{itemize}

\end{lemma}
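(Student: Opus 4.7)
The plan is to reduce everything to the defining congruence
\[
a^{\frac{\kappa^{\deg(P)}-1}{2}} \equiv \left(\frac{a}{P}\right) \pmod{P\cU(\bA)}
\]
from (\ref{def-nth-power-residue-symbol}) with $n=2$, and then to exploit the fact that $a$ already lives in the smaller field $\GF(\kappa)$. Setting $d=\deg(P)$, I would first observe that, because $\bA/P\bA\cong \GF(\kappa^d)$ contains $\GF(\kappa)$ as a subfield, both sides of the congruence above are already in $\GF(\kappa)$, so the congruence gives the clean equality $\left(\frac{a}{P}\right) = a^{(\kappa^d-1)/2}$ in $\GF(\kappa)^\times$.

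Next I would invoke the ``hyper-Fermat'' identity $a^{\kappa-1}=1$ for $a\in\GF(\kappa)^\times$. This is immediate from \L{}o\'s' theorem applied componentwise to Fermat's little theorem in each $\bF_{q_s}$. The arithmetic trick is then the factorization (valid in $\bZ^{\#}$)
\[
\kappa^d-1 \;=\; (\kappa-1)\bigl(1+\kappa+\kappa^2+\cdots+\kappa^{d-1}\bigr),
\]
from which
\[
a^{\frac{\kappa^d-1}{2}} \;=\; \bigl(a^{\frac{\kappa-1}{2}}\bigr)^{1+\kappa+\cdots+\kappa^{d-1}} \;=\; u^{\,1+\kappa+\cdots+\kappa^{d-1}},
\]
where $u:=a^{(\kappa-1)/2}\in\{\pm 1\}$ since $u^2=a^{\kappa-1}=1$. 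Because $\GF(\kappa)$ contains the second roots of unity, $2$ divides $\kappa-1$ in $\bZ^{\#}$, so $\kappa$ is odd, and hence each $\kappa^i$ is odd. Thus the hypernatural exponent $1+\kappa+\cdots+\kappa^{d-1}$ is a sum of $d$ odd hypernaturals and so has the same parity as $d$; since $u=\pm 1$, only the parity of the exponent matters, and we get
\[
a^{\frac{\kappa^d-1}{2}} \;=\; u^{\,d} \;=\; a^{\frac{\kappa-1}{2}\,d},
\]
which is exactly part (i).

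Part (ii) then follows immediately by combining part (i) with Remark \ref{rem-nonsquares-in-GF(kappa)}, which records that $a$ is a nonsquare in $\GF(\kappa)$ if and only if $a^{(\kappa-1)/2}=-1$. Plugging $u=-1$ into $u^d$ gives $-1$ when $d$ is odd and $+1$ when $d$ is even. I do not foresee a serious obstacle here: the only point requiring a little care is checking that the ``dividing by $2$'' step is legitimate in $\bZ^{\#}$, which is ensured by our standing hypothesis $2\mid\kappa-1$, together with the parity analysis of $1+\kappa+\cdots+\kappa^{d-1}$; both are clean transfers of elementary facts about finite fields via \L{}o\'s' theorem.
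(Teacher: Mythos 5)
Your proof is correct. The key steps all check out: the defining congruence with $n=2$ gives an identity in $\cU(\bA)/P\cU(\bA)\cong\GF(\kappa^d)$, and since both $a^{(\kappa^d-1)/2}$ and $\left(\frac{a}{P}\right)$ lie in $\GF(\kappa)$ (the latter because it belongs to $\mu_2\subseteq\GF(\kappa)^\times$), and the composite $\GF(\kappa)\hookrightarrow\cU(\bA)\to\cU(\bA)/P\cU(\bA)$ is an injective field homomorphism, the congruence upgrades to an equality in $\GF(\kappa)$. The factorization $\kappa^d-1=(\kappa-1)(1+\kappa+\cdots+\kappa^{d-1})$ in $\bZ^\#$ and the reduction of $u^{1+\kappa+\cdots+\kappa^{d-1}}$ to $u^d$ (using $u\in\{\pm 1\}$ and the fact that $\kappa$ odd forces each $\kappa^i$ odd, so the sum has the parity of $d$) are both sound transfers via \L{}o\'s' theorem. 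The derivation of part (ii) from part (i) and Remark \ref{rem-nonsquares-in-GF(kappa)} matches the paper exactly.

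The difference from the paper is that the paper disposes of part (i) by citing \cite[Lemma 5.6]{nguyen-ultrafinite-fields-2023}, whereas you give a self-contained derivation from the defining congruence (\ref{def-nth-power-residue-symbol}). Your argument is essentially the classical proof that $\left(\frac{a}{P}\right)=a^{\frac{q-1}{2}\deg P}$ over $\bF_q[t]$, transported to $\bZ^\#$; it makes the underlying arithmetic visible and requires nothing from the companion paper beyond the basic machinery of hypernatural exponents. One small stylistic point: when you write $a^{\frac{\kappa-1}{2}}=u$ and then appeal to ``only the parity of the exponent matters,'' it is worth noting explicitly (as you implicitly do) that this is because $u^2=a^{\kappa-1}=1$ and $(-1)^N$ for a hypernatural $N$ depends only on $N\bmod 2$, which is clear componentwise — that is exactly the kind of statement whose transfer should be flagged as a \L{}o\'s application to keep the argument airtight.
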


\begin{proof}

Part (i) is exactly \cite[Lemma 5.6]{nguyen-ultrafinite-fields-2023}. The second part follows immediately from part (i) and Remark \ref{rem-nonsquares-in-GF(kappa)}.

\end{proof}

Let $p$ be an arbitrary odd prime, and let $q$ be a power of $p$. Let $\cP(q)_{\le k}$ be the space of polynomials of degree at most $k$ over $\bF_q$, and let $\cM(k, q) \subseteq \cP(q)_{\le k}$ the subset of monic polynomials of degree $k$. For a polynomial $f \in \bF_q[t]$ with $\deg(f) = k$, we set $\lVert f \rVert_q = q^k$.

Let
\begin{align*}
\pi_q(k) := \#\{P \in \cM(k, q) \; | \; \text{$P$ is irreducible}\}
\end{align*}
By the prime polynomial theorem (see Rosen \cite[Theorem 5.12]{rosen}),
\begin{align}
\label{eq-prime-polynomial-theorem}
\pi_q(k) = \dfrac{q^k}{k} + O\left(\dfrac{q^{k/2}}{k}\right).
\end{align}

For relatively prime polynomials $c, f \in \bF_q[t]$, let
\begin{align}
\label{eq-primes-in-arithmetic-progressions-of-degree-k}
\pi_q(k; f, c) := \#\{Q = c + fg \in \cM(k, q) \; |\; \text{$Q$ is irreducible}\}.
\end{align}

We denote by $\Phi_q$ the Euler $\phi$-function for $\bF_q[t]$, i.e., $\Phi_q(f)$ is the number of units in $\bF_q[t]/f\bF_q[t]$ (see Rosen \cite[Proposition 1.7]{rosen}). Whenever $f$ is a prime in $\bF_q[t]$, $\bF_q[t]/f\bF_q[t]$ is a field, and thus
\begin{align*}
\Phi_q(f) = \lVert f \rVert - 1 = q^{\deg(f)} - 1.
\end{align*}

For an arbitrary nonzero polynomial $f \in \bF_q[t]$, we have
\begin{align}
\label{e-Euler-q-function}
\Phi_q(f) = \lVert f \rVert\prod_{P | f}\left(1 - \dfrac{1}{\lVert P \rVert}\right).
\end{align}

The following result is due to Bank and Bary-Soroker \cite[Corollary 2.6]{BB-2015}.

\begin{theorem}
\label{thm-BB-2015}
(See Bank and Bary-Soroker \cite{BB-2015})

Let $k$ be a positive integer. Then
\begin{align*}
\pi_q(k; f, c) \sim \dfrac{\pi_q(k)}{\Phi_q(f)}, \; q \to \infty
\end{align*}
holds uniformly for all relatively prime polynomials $c, f \in \bF_q[t]$ satisfying $\lVert f \rVert \le q^{k - 4}$.

\end{theorem}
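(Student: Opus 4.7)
The plan is to prove this Chebotarev-style asymptotic through the Galois theory of a generic family of polynomials, rather than through an $L$-function analysis; the classical Weil bounds for Dirichlet $L$-functions over $\bF_q[t]$ turn out to be insufficient in the regime $\deg f \le k - 4$ when $q \to \infty$ with $k$ fixed. First, reduce to a counting problem on affine space. Assuming that $c$ has already been reduced so that $\deg c < \deg f$, each $P \in \cM(k, q)$ with $P \equiv c \pmod{f}$ can be written uniquely as $P = c + f \cdot g$ for a monic $g$ of degree $m := k - \deg f$. Thus $\pi_q(k; f, c)$ counts the parameter vectors $\vec{\alpha} = (\alpha_0, \dots, \alpha_{m-1}) \in \bF_q^m$ for which
\begin{align*}
F(t, \vec{\alpha}) := c(t) + f(t)\left(t^m + \sum_{i=0}^{m-1} \alpha_i t^i\right)
\end{align*}
is irreducible over $\bF_q$, and one has $m \ge 4$ by the hypothesis $\lVert f \rVert \le q^{k - 4}$.

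Next, treat $\vec{a} = (a_0, \dots, a_{m-1})$ as indeterminates and study the generic polynomial $F(t, \vec{a}) \in \bF_q(\vec{a})[t]$. The central step is to show that the Galois group $G$ of the splitting field of $F(t, \vec{a})$ over $\bF_q(\vec{a})$ is the full symmetric group $S_k$. Since $m \ge 4$, one has enough free parameters to produce, via specializations $\vec{a} \mapsto \vec{\alpha}$, both a transposition (specialize so that $F$ has exactly one double root and an irreducible complement) and a $(k-1)$-cycle (specialize so that $F$ factors as a linear factor times an irreducible of degree $k - 1$) as elements of $G$. These two cycle types together generate $S_k$ by a classical theorem of Jordan.

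With $G = S_k$ established, the asymptotic count of specializations yielding an irreducible $F(t, \vec{\alpha})$ follows from the function field Chebotarev density theorem applied to the Galois cover of the $m$-dimensional affine space over $\bF_q$ associated with $F$; equivalently, Deligne's equidistribution theorem, via Weil-type bounds on traces of Frobenius on $\ell$-adic cohomology of the cover, gives
\begin{align*}
\pi_q(k; f, c) = \frac{q^m}{k} + O\bigl(q^{m - 1/2}\bigr)
\end{align*}
uniformly in $c, f$, with the implied constant depending only on $k$. Combining this with (\ref{eq-prime-polynomial-theorem}), which gives $\pi_q(k) \sim q^k/k$, and with (\ref{e-Euler-q-function}), which gives $\Phi_q(f) = q^{\deg f}\prod_{P \mid f}(1 - \lVert P \rVert^{-1}) \sim q^{\deg f}$ as $q \to \infty$, produces the asserted $\pi_q(k; f, c) \sim \pi_q(k)/\Phi_q(f)$, uniformly in the admissible range.

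The main obstacle is the Galois-group identification $G = S_k$. Because the family is constrained by the prescribed factor $f$, it is not a priori obvious that specializations can realize the desired cycle types: the fixed $c$ and $f$ might conspire to restrict the attainable factorization patterns, for instance if $f$ has a special shape that imprints itself on $F$. The hypothesis $\deg f \le k - 4$ is precisely what supplies the four free coordinates of $\vec{a}$ needed to execute the two specialization arguments; with fewer free coordinates one loses access to the transposition and the argument collapses. Establishing $G = S_k$ rigorously and uniformly in $c, f$ is the heart of the proof, and requires a careful geometric analysis of the loci in parameter space where $F$ acquires repeated roots or a prescribed linear factor.
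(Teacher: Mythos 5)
The paper offers no proof of Theorem~\ref{thm-BB-2015}; it is stated verbatim as a citation to Bank and Bary-Soroker \cite[Corollary 2.6]{BB-2015} and is used as a black box, so there is no internal argument for you to match. Your sketch does identify the actual strategy of that reference: rewrite the count as a count of specializations $\vec{\alpha} \in \bF_q^m$, $m = k - \deg f$, making $F(t,\vec{a}) = c + f\cdot(t^m + \sum a_i t^i)$ irreducible, show the Galois group of $F$ over $\bF_q(\vec{a})$ is $S_k$, and conclude by an explicit Chebotarev or Lang--Weil-type count. Your observation that a classical $L$-function argument with the Weil bound only reaches $\deg f < k/2$ and hence cannot give the full range $\deg f \le k-4$ is also correct and is indeed one of the motivations for the Galois-theoretic approach.

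That said, as a proof the proposal has a genuine gap that you yourself flag in the last paragraph: you never establish $G = S_k$, you only say it "requires a careful geometric analysis." This is not a detail; it is essentially the entire content of the Bank--Bary-Soroker theorem, and it is far from routine. The specialization trick you describe (forcing exactly one double root to get a transposition, or a linear times irreducible factor to get a $(k-1)$-cycle) must be shown to be achievable within the constrained family $c + fg$ uniformly in $c,f$; it must also be shown that the corresponding inertia element really has the claimed cycle type, which in small characteristic (notably $p=2$, where every discriminant is a square, and $p=3$) needs separate treatment — Bank--Bary-Soroker devote a substantial portion of their paper precisely to this. Your appeal to "a classical theorem of Jordan" also doesn't quite say what you need: Jordan's theorem concerns primitive groups containing a $p$-cycle for a prime $p \le k-3$, whereas what you actually use is the elementary fact that a transitive subgroup of $S_k$ containing a $(k-1)$-cycle is $2$-transitive, and a $2$-transitive group containing a transposition is $S_k$; this fact is fine, but it still presupposes both cycle types lie in $G$, which is unproved. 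Finally, the error bound $\pi_q(k;f,c) = q^m/k + O(q^{m-1/2})$ with an implied constant "depending only on $k$" needs a uniform bound on the geometric complexity (e.g., Betti numbers or degree) of the cover as $c,f$ vary; you assert this via "Deligne's equidistribution theorem" without setting up the cover or controlling its invariants uniformly. So the outline is headed in the right direction and faithfully reflects the reference's philosophy, but the proof is not there: the Galois-group computation and the uniform Chebotarev error estimate — the two load-bearing steps — are each left as assertions.
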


Let $c, f$ be relatively prime polynomials in $\bA = \GF(\kappa)[t]$. Since $\GF(\kappa) =\prod_{s \in S}\bF_{q_s}/\cD$, using \L{}o\'s' theorem, one can write $c = \ulim_{s \in S}c_s$ and $f = \ulim_{s \in S}f_s$, where the $c_s$ and $f_s$ are polynomials of degrees $\deg(c)$ and $\deg(f)$ in $\bF_{q_s}[t]$, respectively for $\cD$-almost all $s\in S$. Since $c, f$ are relatively prime in $\bA$, it follows from \L{}o\'s' theorem that $c_s, f_s$ are relatively prime in $\bF_{q_s}[t]$ for $\cD$-almost all $s\in S$. For a sufficiently large integer $k$, it follows from Theorem \ref{thm-BB-2015} that there exists a finite set $S_0 \subset S$ such that
\begin{align*}
\pi_{q_s}(k; f_s, c_s) > \dfrac{\pi_{q_s}(k)}{2\Phi_{q_s}(f_s)}
\end{align*}
for all $s \in S\setminus S_0$. Thus for any absolute constant $M > 0$, since the sequence $\{q_s\}_{s\in S}$ is unbounded, $q_s > M$ for all $s \in S\setminus S_1$ for some finite subset $S_1$ of $S$. We see from (\ref{eq-prime-polynomial-theorem}) and (\ref{e-Euler-q-function}) that there exists an integer $k_0 > 0$ such that for all $k \ge k_0$,
\begin{align*}
\dfrac{\pi_{q_s}(k)}{2\Phi_{q_s}(f_s)} > M
\end{align*}
for all $s \in S\setminus (S_0 \cup S_1)$, and thus
\begin{align}
\label{e-lower-bound-of-pi-q-s}
\pi_{q_s}(k; f_s, c_s) > M.
\end{align}
for any $k \ge k_0$ and $s \in S \setminus (S_0 \cup S_1)$. For an arbitrary positive integer $k \ge k_0$, by (\ref{e-lower-bound-of-pi-q-s}), we choose $a_s \in \bF_{q_s}[t]$ such that $P_s = c_s + a_sf_s$ is a monic prime of degree $k$ in $\bF_{q_s}[t]$ for all $s \in S\setminus (S_0 \cup S_1)$. Since $S_0 \cup S_1$ is a finite set, $S_0 \cup S_1$ does not belong in a nonprincipal ultrafilter $\cD$, and thus $S\setminus (S_0 \cup S_1) \in \cD$. Hence $P_s$ is a monic prime of degree $k$ in $\bF_{q_s}[t]$ for $\cD$-almost all $s\in S$. Since the $P_s$ is of degree $k$, the element $P = \ulim_{s \in S}P_s$ belongs in $\bA$, and is of degree $k$. By \L{}o\'s' theorem, $P$ is a monic prime of degree $k$ in $\bA$. Furthermore, since $P_s = c_s + a_sf_s$, we see that
\begin{align*}
P = \ulim_{s\in S}P_s = \ulim_{s\in S}c_s + \ulim_{s\in S}a_s\ulim_{s\in S}f_s = c + af,
\end{align*}
where $a= \ulim_{s\in S}a_s$. Since the $a_s$ are polynomials in $\bF_{q_s}[t]$ of degrees $\le k$, we deduce that $a$ is a polynomial of degree $\le k$ in $\bA$. Thus we obtain the following.

\begin{corollary}
\label{cor-BB-thm-for-ultra-finite-fields}
(Analogue of Dirichlet's theorem on primes in arithmetic progression for $\bA$)

For any relatively prime polynomials $c, f \in \bA = \GF(\kappa)[t]$, there exists a positive integer $k_0 >0$ such that for any integer $k \ge k_0$, there exists a monic prime of degree $k$ of the form $P = c + af$ for some $a \in \bA$.

\end{corollary}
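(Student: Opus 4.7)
The plan is to transfer the Bank--Bary-Soroker quantitative form of Dirichlet's theorem for $\bF_q[t]$ (Theorem \ref{thm-BB-2015}) to $\bA = \GF(\kappa)[t]$ by invoking \L{}o\'s' theorem. The feature of Theorem \ref{thm-BB-2015} that makes this work is that the asymptotic $\pi_q(k;f,c) \sim \pi_q(k)/\Phi_q(f)$ is \emph{uniform} as $q \to \infty$, which matches the hypothesis that $\{q_s\}_{s \in S}$ is unbounded for $\cD$-almost every $s$.

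First, by \L{}o\'s' theorem applied to the first-order properties ``is monic of degree $n$'' and ``$\gcd = 1$'', I would write $c = \ulim_{s \in S} c_s$ and $f = \ulim_{s \in S} f_s$ with $c_s, f_s \in \bF_{q_s}[t]$ of degrees $\deg(c)$ and $\deg(f)$ respectively, and with $c_s, f_s$ coprime, all for $\cD$-almost every $s$. Next, I would choose a threshold $k_0 > \deg(f) + 4$ large enough so that for every $k \ge k_0$ the condition $\lVert f_s \rVert_{q_s} = q_s^{\deg(f)} \le q_s^{k - 4}$ of Theorem \ref{thm-BB-2015} is satisfied. Combining that theorem with the prime polynomial theorem \eqref{eq-prime-polynomial-theorem} and the formula \eqref{e-Euler-q-function} for $\Phi_{q_s}(f_s)$, and using again that $q_s$ is unbounded, I would obtain a cofinite $S' \subseteq S$ (depending on $k$) such that for every $s \in S'$,
\[
\pi_{q_s}(k; f_s, c_s) \;\ge\; \tfrac{1}{2}\,\frac{\pi_{q_s}(k)}{\Phi_{q_s}(f_s)} \;\ge\; 1.
\]

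For each $s \in S'$ I would then pick $a_s \in \bF_{q_s}[t]$ with $\deg(a_s) \le k - \deg(f)$ such that $P_s := c_s + a_s f_s$ is a monic irreducible polynomial of degree $k$; for the finitely many remaining $s \in S \setminus S'$, which do not lie in the ultrafilter $\cD$, the choice of $a_s$ is immaterial. Setting $a := \ulim_{s \in S} a_s$ and $P := \ulim_{s \in S} P_s$, the uniform degree bounds $\deg(a_s) \le k - \deg(f)$ and $\deg(P_s) = k$ guarantee that $a$ and $P$ lie in $\bA$ itself rather than merely in the ultrahull $\cU(\bA)$. \L{}o\'s' theorem applied to the sentence ``$P_s$ is a monic irreducible polynomial of degree $k$ in $\bF_{q_s}[t]$'' transfers these properties to $P$, and compatibility of ultrapower arithmetic with addition and multiplication gives $P = c + a f$.

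The one delicate point in this outline is ensuring that the uniform hypothesis $\lVert f_s \rVert_{q_s} \le q_s^{k-4}$ of Theorem \ref{thm-BB-2015} holds for $\cD$-almost all $s$ simultaneously; but because $\deg(f)$ is a fixed integer while $q_s \to \infty$ along $\cD$, this is automatic once $k \ge \deg(f) + 4$, so it does not pose a serious obstacle. The construction of the \textit{bounded-degree} witnesses $a_s$ is what keeps the ultraproduct $a$ inside the ring $\bA$, and is the reason an explicit quantitative version of Dirichlet's theorem (rather than a purely qualitative one) is needed as input.
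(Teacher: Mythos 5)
Your proposal is correct and follows essentially the same approach as the paper's own proof: componentwise decomposition via \L{}o\'s' theorem, the quantitative Bank--Bary-Soroker estimate (Theorem \ref{thm-BB-2015}) combined with the prime polynomial theorem and the $\Phi_q$ formula to produce at least one admissible prime for $\cD$-almost all $s$, and bounded-degree witnesses $a_s$ to keep the ultraproduct $a = \ulim_{s}a_s$ inside $\bA$ rather than merely in $\cU(\bA)$. If anything, you are slightly more careful than the paper in explicitly checking the uniformity hypothesis $\lVert f_s\rVert_{q_s}\le q_s^{k-4}$ of Theorem \ref{thm-BB-2015} by requiring $k_0 > \deg(f)+4$, a detail the paper leaves implicit.
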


The following is an analogue of Lemma 2.5 in Tyrrell \cite{tyrell}.

\begin{lemma}
\label{lem-quaternion-algebra-ramified-exactly-at-a-given-prime-P-and-infinity-main-lem1}

Let $P$ be a monic prime in $\bA$, and let $a$ be a nonsquare element in $\GF(\kappa)$. Then there exists a monic prime $Q \in \bA$ that satisfies the following.
\begin{itemize}

\item [(i)] $Q$ is of opposite parity in degree to $P$.

\item [(ii)] the only ramified (or nonsplit) primes of $\bH_{aP, aQ}$ are exactly $P$ and the infinite prime $\infty$, i.e, both local symbols $(aP, aQ)_P$, $(aP, aQ)_{\infty}$ equal $-1$ and $(aP, aQ)_{\fp} = 1$ for all primes $\fp \ne P, \infty$.

\end{itemize}

\end{lemma}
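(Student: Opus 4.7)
The plan is to combine the explicit formula for local symbols in Corollary \ref{cor-Nguyen-formula-for-local-symbols-K_P-quadratic-symbol}, the Hilbert reciprocity analogue of Theorem \ref{thm-analogue-of-the-Hilbert-reciprocity-law}, and the Dirichlet-type theorem for $\bA$ in Corollary \ref{cor-BB-thm-for-ultra-finite-fields} to construct $Q$ by simultaneously prescribing its residue class modulo $P$ and the parity of its degree. First I would observe that for every monic prime $\fp \in \bP$ distinct from $P$ and $Q$ one has $v_{\fp}(aP) = v_{\fp}(aQ) = 0$, so Corollary \ref{cor-Nguyen-formula-for-local-symbols-K_P-quadratic-symbol} immediately gives $(aP, aQ)_{\fp} = 1$. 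Hence the only candidates for ramification are $P$, $Q$, and $\infty$, and by Theorem \ref{thm-analogue-of-the-Hilbert-reciprocity-law} we have $(aP, aQ)_P \cdot (aP, aQ)_Q \cdot (aP, aQ)_{\infty} = 1$. Consequently, it suffices to arrange $(aP, aQ)_P = -1$ and $(aP, aQ)_{\infty} = -1$; reciprocity will then force $(aP, aQ)_Q = 1$ automatically, yielding the splitting at $Q$ asserted in (ii) for free.

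Next I would evaluate the local symbol at infinity. Using $v_{\infty}(aP) = -\deg(P)$ and $v_{\infty}(aQ) = -\deg(Q)$, together with the identity $\red_{\infty}(Q^{\deg(P)}/P^{\deg(Q)}) = 1$ established in the proof of Theorem \ref{thm-analogue-of-the-Hilbert-reciprocity-law} (both $P$ and $Q$ are monic), and the fact that $a^{(\kappa-1)/2} = -1$ from Remark \ref{rem-nonsquares-in-GF(kappa)}, Corollary \ref{cor-Nguyen-formula-for-local-symbols-K_P-quadratic-symbol} produces
\begin{align*}
(aP, aQ)_{\infty} = \left((-1)^{(\kappa-1)/2}\right)^{\deg(P)\deg(Q)} \cdot (-1)^{\deg(P) - \deg(Q)}.
\end{align*}
The value of $(-1)^{(\kappa-1)/2}$ in $\GF(\kappa)$ may a priori depend on the ultrafilter $\cD$, but imposing the opposite-parity condition of (i) makes $\deg(P)\deg(Q)$ even and $\deg(P) - \deg(Q)$ odd, so this ultrafilter-dependent factor collapses to $1$ and we obtain $(aP, aQ)_{\infty} = -1$ unconditionally. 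Thus requirement (i) is precisely what guarantees infinite-place ramification.

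Finally, at $P$ the formula in Corollary \ref{cor-Nguyen-formula-for-local-symbols-K_P-quadratic-symbol}, combined with equation (\ref{e-quadratic-residue-symbol-equal-reduction-to-a-power}), the multiplicativity in Proposition \ref{prop-properties-of-power-residue-symbol}(ii), and Lemma \ref{lem-power-residue-symbol-for-constants}(i), yields
\begin{align*}
(aP, aQ)_P = \left(\frac{aQ}{P}\right) = \left(\frac{a}{P}\right)\left(\frac{Q}{P}\right) = (-1)^{\deg(P)}\left(\frac{Q}{P}\right),
\end{align*}
so $(aP, aQ)_P = -1$ is equivalent to $\left(\dfrac{Q}{P}\right) = (-1)^{\deg(P) + 1}$. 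To realize this I would pick a residue $c \in \bA$ with $\gcd(c, P) = 1$ and $\left(\dfrac{c}{P}\right) = (-1)^{\deg(P) + 1}$; the existence of both square residues (take $c = 1$) and non-square residues (via Proposition \ref{prop-properties-of-power-residue-symbol}(iv)) modulo $P$ handles either parity of $\deg(P)$. Then by Corollary \ref{cor-BB-thm-for-ultra-finite-fields} there is a threshold $k_0$ such that for every integer $k \ge k_0$ one can find a monic prime of the form $Q = c + hP$ of degree exactly $k$; choosing such a $k$ of the parity opposite to $\deg(P)$ yields $Q$ satisfying (i), and by Proposition \ref{prop-properties-of-power-residue-symbol}(i) the congruence $Q \equiv c \pmod{P}$ gives $\left(\dfrac{Q}{P}\right) = \left(\dfrac{c}{P}\right) = (-1)^{\deg(P)+1}$, establishing (ii). The only real subtlety is the ultrafilter-dependent factor $(-1)^{(\kappa - 1)/2}$ in the infinite-place computation, which is killed precisely by the parity constraint that (i) imposes; once that is observed, Hilbert reciprocity and Dirichlet's theorem do all the remaining work.
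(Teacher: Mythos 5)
Your proof is correct and closely parallels the paper's strategy, but with two clean reorganizations worth noting. First, the paper computes $(aP,aQ)_P$ and $(aP,aQ)_Q$ directly from the power-residue-symbol formula and then obtains $(aP,aQ)_\infty = -1$ at the end as a consequence of the Hilbert reciprocity analogue; you instead compute $(aP,aQ)_P$ and $(aP,aQ)_\infty$ directly and let reciprocity deliver $(aP,aQ)_Q = 1$ for free. This "pivot" means you must actually carry out the $\infty$-place computation, which you do correctly: using $\red_\infty(Q^{\deg P}/P^{\deg Q}) = 1$ and $a^{(\kappa-1)/2} = -1$, you reduce $(aP,aQ)_\infty$ to $(-1)^{\frac{\kappa-1}{2}\deg P \deg Q}\cdot(-1)^{\deg P - \deg Q}$ and observe that the opposite-parity hypothesis makes the hyperinteger-power factor trivial. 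Second, the paper splits into two cases according to the parity of $\deg P$ (prescribing $Q \equiv \beta^2 \pmod P$ when $\deg P$ is odd, and $Q$ congruent to a nonsquare when $\deg P$ is even); you unify these by reducing $(aP,aQ)_P = -1$ to the single target $\left(\frac{Q}{P}\right) = (-1)^{\deg P + 1}$, then simply choose a residue class $c$ mod $P$ realizing that value (using $c=1$ or Proposition \ref{prop-properties-of-power-residue-symbol}(iv)) and invoke the Dirichlet-type Corollary \ref{cor-BB-thm-for-ultra-finite-fields} to find a monic prime $Q \equiv c \pmod P$ of the opposite-parity degree. Both presentations use the same three ingredients (explicit local symbols, the reciprocity analogue, Dirichlet for $\bA$); yours is a bit more economical because the case distinction is absorbed into a formula and the explicit verification at $Q$ is replaced by a one-line reciprocity argument.
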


\begin{remark}
\label{rem-quadratic-symbols-and-squares-in-residue-fields}

By Proposition \ref{prop-properties-of-power-residue-symbol}(iii) with $n = 2$, for any polynomial $\alpha \in \bA$, the quadratic residue symbol $\left(\dfrac{\alpha}{P}\right)$ is equal $-1$ or $1$, according to whether $\red_P(\alpha)$ is a nonsquare or a square in $\GF(\kappa)_P$, respectively.

\end{remark}

\begin{proof}

We consider the following cases.

$\star$ \textbf{Case 1. $P$ is of odd degree.}

Take an arbitrary polynomial $\beta$ in $\bA$ that is relatively prime to $P$. By Corollary \ref{cor-BB-thm-for-ultra-finite-fields} implies that for an arbitrarily large even integer $2k$, there exists a monic prime $Q$ of degree $2k$ in $\bA$ such that $Q \equiv \beta^2 \pmod{P}$. Thus part (i) follows immediately.

Since $\red_P(Q) = (\red_P(\beta))^2$ is a square in $\GF(\kappa)_P$, we deduce that $\left(\dfrac{Q}{P}\right) = 1$. By Lemma \ref{lem-power-residue-symbol-for-constants}, $\left(\dfrac{a}{P}\right) = a^{\frac{\kappa - 1}{2}\deg(P)} = -1$ since $a^{\frac{\kappa - 1}{2}} = - 1$ (see Remark \ref{rem-nonsquares-in-GF(kappa)}). Thus
\begin{align*}
\left(\dfrac{aQ}{P}\right) = \left(\dfrac{Q}{P}\right) \left(\dfrac{a}{P}\right) = -1.
\end{align*}
By Corollary \ref{cor-Nguyen-formula-for-local-symbols-K_P-quadratic-symbol}, (\ref{e-quadratic-residue-symbol-equal-reduction-to-a-power}) in Remark \ref{rem-residue-symbol-equal-reduction-to-a-power}, and since $v_P(aQ) = 0$, $v_P(aP) = 1$, we deduce that
\begin{align*}
(aP, aQ)_P &= \left((-1)^{v_P(aP)v_P(aQ)} \red_P\left(\dfrac{(aP)^{v_P(aQ)}}{(aQ)^{v_P(aP)}}\right)\right)^{\frac{\kappa^{\deg(P)} - 1}{2}} \\
&= \red_P(aQ^{-1})^{\frac{\kappa^{\deg(P)} - 1}{2}} \\
&= \left(\dfrac{aQ}{P}\right)^{-1}\\
&= -1.
\end{align*}
Thus $\bH_{aP, aQ}$ is nonsplit (or ramified) at $P$.

We now prove that $\bH_{aP, aQ}$ is split (or unramified) at $Q$. Since both $P, Q$ are monic, the Reciprocity Law \ref{thm-General-Reciprocity-Law} implies that
\begin{align*}
\left(\dfrac{Q}{P}\right)\left(\dfrac{P}{Q}\right) = (-1)^{\frac{\kappa - 1}{2}\deg(P)\deg(Q)} = 1,
\end{align*}
and thus
\begin{align*}
\left(\dfrac{P}{Q}\right) = 1.
\end{align*}
By Lemma \ref{lem-power-residue-symbol-for-constants} and Remark \ref{rem-nonsquares-in-GF(kappa)},
\begin{align*}
\left(\dfrac{a}{Q}\right) = a^{\frac{\kappa - 1}{2}\deg(Q)} = (-1)^{2k} = 1.
\end{align*}
Therefore
\begin{align*}
\left(\dfrac{aP}{Q}\right) = \left(\dfrac{a}{Q}\right)\left(\dfrac{P}{Q}\right) = 1.
\end{align*}

Thus we deduce from Corollary \ref{cor-Nguyen-formula-for-local-symbols-K_P-quadratic-symbol}, (\ref{e-quadratic-residue-symbol-equal-reduction-to-a-power}) in Remark \ref{rem-residue-symbol-equal-reduction-to-a-power}, and the above equation that
\begin{align*}
(aP, aQ)_Q &= \left((-1)^{v_Q(aP)v_Q(aQ)} \red_Q\left(\dfrac{(aP)^{v_Q(aQ)}}{(aQ)^{v_Q(aP)}}\right)\right)^{\frac{\kappa^{\deg(Q)} - 1}{2}} \\
&= \red_Q(aP)^{\frac{\kappa^{\deg(Q)} - 1}{2}} \\
&= \left(\dfrac{aP}{Q}\right)\\
&= 1.
\end{align*}
Therefore $\bH_{aP, aQ}$ is split (or unramified) at $Q$.

For any non-infinite prime $\fp \ne P, Q, \infty$, we know that $v_{\fp}(P) = v_{\fp}(Q) = 0$, and it thus follows from Corollary \ref{cor-Nguyen-formula-for-local-symbols-K_P-quadratic-symbol} that $(aP, aQ)_{\fp} = 1$, which proves that $\bH_{aP, aQ}$ is split at $\fp$.

By an analogue of the Hilbert reciprocity law \ref{thm-analogue-of-the-Hilbert-reciprocity-law}, we deduce that
\begin{align*}
-1 = -\prod_{\fp \in \bP\cup \{\infty\}}(aP, aQ)_{\fp} = -(aP, aQ)_P(aP, aQ)_{\infty} = (aP, aQ)_{\infty},
\end{align*}
and thus $\bH_{aP, aQ}$ is nonsplit (or ramified) at $\infty$. Therefore the only ramified primes of $\bH_{aP, aQ}$ are exactly $P$ and $\infty$.

$\star$ \textbf{Case 2. $P$ is of even degree $d$.}

We know that the residue field $\bA/P\bA$ is an extension of degree $d$ over $\GF(\kappa)$. Since $\GF(\kappa)$ has a unique extension of degree $d$ that is of the form $\GF(\kappa^d) = \prod_{s\in S}\bF_{q_s^d}/\cD$ (see Ax \cite{ax-1968} or Nguyen \cite[Theorem 3.35]{nguyen-ultrafinite-fields-2023}), we can identify $\bA/P\bA$ with $\GF(\kappa^d)$.

Choose a nonsquare element $\alpha = \ulim_{s\in S}\alpha_s \in \GF(\kappa^d)$ for some elements $\alpha_s \in \bF_{q_s^d}$. Such a nonsquare element $\alpha$ can be constructed by choosing nonsquare elements $\alpha_s$ in $\bF_{q_s^d}$ for $\cD$-almost all $s\in S$, and so \L{}o\'s' theorem implies that $\alpha$ is a nonsquare element in $\GF(\kappa^d)$. Choose a polynomial $\beta \in \bA$ such that $\red_P(\beta) = \alpha$. By Corollary \ref{cor-BB-thm-for-ultra-finite-fields}, for an arbitrarily large odd integer $2k + 1$, there exists a monic prime $Q$ of degree $2k + 1$ such that $Q \equiv \beta \pmod{P}$. Thus part (i) follows trivially.

We know that $\red_P(Q) = \red_P(\beta) = \alpha$ is a nonsquare element in the residue field $\GF(\kappa)_P = \GF(\kappa^d)$. By Remark \ref{rem-quadratic-symbols-and-squares-in-residue-fields}, we deduce that
\begin{align*}
\left(\dfrac{Q}{P}\right) = -1.
\end{align*}

By Lemma \ref{lem-power-residue-symbol-for-constants}, $\left(\dfrac{a}{P}\right) = a^{\frac{\kappa - 1}{2}\deg(P)} = 1$ since $a^{\frac{\kappa - 1}{2}} = - 1$ and $\deg(P)$ is even (see Remark \ref{rem-nonsquares-in-GF(kappa)}). Thus
\begin{align*}
\left(\dfrac{aQ}{P}\right) = \left(\dfrac{Q}{P}\right) \left(\dfrac{a}{P}\right) = -1.
\end{align*}
Using the same argument as in Case 1, we deduce that $\bH_{aP, aQ}$ is nonsplit (or ramified) at $P$.

We now prove that $\bH_{aP, aQ}$ is split (or unramified) at $Q$. Since both $P, Q$ are monic, the Reciprocity Law \ref{thm-General-Reciprocity-Law} implies that
\begin{align*}
\left(\dfrac{Q}{P}\right)\left(\dfrac{P}{Q}\right) = (-1)^{\frac{\kappa - 1}{2}\deg(P)\deg(Q)} = 1,
\end{align*}
and thus
\begin{align*}
\left(\dfrac{P}{Q}\right) = -1.
\end{align*}
By Lemma \ref{lem-power-residue-symbol-for-constants} and Remark \ref{rem-nonsquares-in-GF(kappa)},
\begin{align*}
\left(\dfrac{a}{Q}\right) = a^{\frac{\kappa - 1}{2}\deg(Q)} = (-1)^{2k + 1} = -1.
\end{align*}
Therefore
\begin{align*}
\left(\dfrac{aP}{Q}\right) = \left(\dfrac{a}{Q}\right)\left(\dfrac{P}{Q}\right) = 1.
\end{align*}

Thus we deduce from Corollary \ref{cor-Nguyen-formula-for-local-symbols-K_P-quadratic-symbol}, (\ref{e-quadratic-residue-symbol-equal-reduction-to-a-power}) in Remark \ref{rem-residue-symbol-equal-reduction-to-a-power}, and the above equation that
\begin{align*}
(aP, aQ)_Q &= \left((-1)^{v_Q(aP)v_Q(aQ)} \red_Q\left(\dfrac{(aP)^{v_Q(aQ)}}{(aQ)^{v_Q(aP)}}\right)\right)^{\frac{\kappa^{\deg(Q)} - 1}{2}} \\
&= \red_Q(aP)^{\frac{\kappa^{\deg(Q)} - 1}{2}} \\
&= \left(\dfrac{aP}{Q}\right)\\
&= 1.
\end{align*}
Thus $\bH_{aP, aQ}$ is split (or unramified) at $Q$.

For any non-infinite prime $\fp$, repeating in the same manner as in Case 1, we can deduce that $\bH_{aP, aQ}$ is split (or unramified) at $\fp$. Using an analogue of the Hilbert reciprocity law \ref{thm-analogue-of-the-Hilbert-reciprocity-law}, we can verify that $\bH_{aP, aQ}$ is nonsplit at the infinite prime $\infty$, and thus the lemma follows immediately.

\end{proof}

We obtain a generalization of Shlapentokh's theorem (see \cite[Theorem 4.4]{SHL94}) for existential definability of valuation rings in a global field to that of valuation rings in $\bK$.

\begin{corollary}
\label{cor-generalization-of-Poonen-Remark-2.6}

For any non-infinite prime $P \in \bP$ of $\bK$, the ring $\bA_P = \cO_{\bK_P} \cap \bK$ is positively existentially definable with parameters in $\bK$.

\end{corollary}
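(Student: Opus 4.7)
My plan is to deduce the corollary directly from Theorem \ref{thm-integrality-at-a-prime-for-K}, which is essentially a restatement once the notation is unwound. By convention \ref{itm: N2}, for a non-infinite prime $P \in \bP$ the ring $\bA_P$ is defined as
\[
\bA_P = \cO_{\bK_P}\cap \bK = \{\alpha \in \bK : v_P(\alpha) \geq 0\},
\]
and Theorem \ref{thm-integrality-at-a-prime-for-K} asserts precisely that this set is Diophantine over $\bK$, definable by a positive existential formula with parameters in at most $9$ quantifiers. Since ``Diophantine'' and ``positively existentially definable with parameters'' name the same notion, the corollary follows immediately.

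There is no substantive obstacle in passing from the theorem to the corollary; the content is all in Theorem \ref{thm-integrality-at-a-prime-for-K} itself, whose proof assembles three function-field analogues of Eisentr\"ager's ingredients for global fields: the Hasse-type exact sequence of Andriychuk (Theorem \ref{thm-andriychuk-Hasse-exact-sequence}), the isomorphism $\mathrm{Br}\,\bK_P \cong \bQ/\bZ$ together with the splitting criterion of Theorem \ref{thm-BrK=Q/Z-Serre} and Lemma \ref{lem-quadratic-fields-split-central-simple-algebra-serre}, and the explicit construction in Lemma \ref{lem-quaternion-algebra-ramified-exactly-at-a-given-prime-P-and-infinity-main-lem1} of a quaternion algebra over $\bK$ ramified at exactly two prescribed primes. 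Voight's criterion that a quadratic extension splits the quaternion algebra iff it embeds in it then converts the valuation condition $v_P(\alpha) \geq 0$ into an existential statement about $\alpha$ generating such a subfield.

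If one wished to give a more self-contained proof in the Shlapentokh spirit, the natural attempt would be to fix a nonsquare $a \in \GF(\kappa)$, apply Lemma \ref{lem-quaternion-algebra-ramified-exactly-at-a-given-prime-P-and-infinity-main-lem1} to obtain a prime $Q$ with $\Delta_{aP, aQ} = \{P, \infty\}$, and invoke Lemma \ref{lem-the-set-T-a-b} to obtain $\cT_{aP,aQ} = \bA_P \cap \bA_\infty$ (which is Diophantine by Lemma \ref{lem-T-is-a-b-Diophantine}). The hard part would then be stripping off the constraint at $\infty$: the naive identity $\bA_P = \bigcup_{n \geq 0} t^n \cdot (\bA_P \cap \bA_\infty)$ (valid when $P \neq t$) involves quantification over $\bN$ and hence does not stay within a first-order formula, while the case $P = t$ fails outright because $v_t$ and $v_\infty$ are not comparable on $\bA_P$. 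These pathologies are precisely what Theorem \ref{thm-integrality-at-a-prime-for-K} sidesteps, so the cleanest route is simply to cite it.
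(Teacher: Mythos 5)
Your deduction is correct as stated: Theorem \ref{thm-integrality-at-a-prime-for-K} already asserts that $\{\alpha\in\bK : v_P(\alpha)\geq 0\}$ is Diophantine for any $P\in\bP\cup\{\infty\}$, and this set is by definition $\bA_P$. Since ``Diophantine'' and ``positively existentially definable with parameters'' are the same notion, the corollary does follow at once from that theorem, and your accounting of where the real work lives (the Hasse exact sequence of Andriychuk, the Brauer-group computations from Serre, and Voight's embedding criterion) is accurate.

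However, this is not the route the paper actually takes, and the difference is worth registering. The paper instead follows Poonen's two-quaternion-algebra argument (Remark 2.5 of Poonen): one finds, by adapting the construction of Lemma \ref{lem-quaternion-algebra-ramified-exactly-at-a-given-prime-P-and-infinity-main-lem1}, two distinct \emph{non-infinite} primes $Q_1\neq Q_2$, both $\neq P$, and elements $a,b,c,d\in\bK^\times$ with $\Delta_{a,b}=\{P,Q_1\}$ and $\Delta_{c,d}=\{P,Q_2\}$. By Lemma \ref{lem-the-set-T-a-b}, $\cT_{a,b}=\bA_P\cap\bA_{Q_1}$ and $\cT_{c,d}=\bA_P\cap\bA_{Q_2}$, and then $\bA_P=\cT_{a,b}+\cT_{c,d}$ by approximation, which is positively existentially definable by Lemma \ref{lem-T-is-a-b-Diophantine}. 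Note that this is precisely the ``Shlapentokh-spirit'' argument you dismissed as problematic in your final paragraph: the obstruction you correctly identified with $\bA_P\cap\bA_\infty$ (non-first-order quantification over $\bN$, incomparability of $v_t$ and $v_\infty$) arises only because you anchored one of the two intersections at $\infty$. The paper avoids this entirely by anchoring \emph{both} intersections at finite primes $Q_1,Q_2$, so that the sum $(\bA_P\cap\bA_{Q_1})+(\bA_P\cap\bA_{Q_2})$ recovers $\bA_P$ by a single application of the (strong) approximation theorem, with no infinitary union. Your route via Theorem \ref{thm-integrality-at-a-prime-for-K} is shorter given that theorem, whereas the paper's route is more self-contained relative to the $\cT_{a,b}$ machinery it has already built and makes the quantifier count explicit at this stage; both are valid.
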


\begin{proof}

This proof essentially follows the proof of Poonen \cite[Remark 2.5]{poonen-2009}. For any non-infinite prime $P \in \bP$, using the similar arguments as in Lemma \ref{lem-quaternion-algebra-ramified-exactly-at-a-given-prime-P-and-infinity-main-lem1}, we can find elements $a, b, c, d \in \bK^{\times}$ and distinct non-infinite primes $Q_1, Q_2 \ne P$ such that $\Delta_{a, b} = \{P, Q_1\}$ and $\Delta_{c, d} = \{P, Q_2\}$. By Lemma \ref{lem-the-set-T-a-b}, $\cT_{a, b} = \bA_P \cap \bA_{Q_1}$ and $\cT_{c, d} = \bA_P \cap \bA_{Q_2}$. It is easy to see that $\bA_P = \bA_P \cap \bA_{Q_1} + \bA_P \cap \bA_{Q_2}$. Thus
\begin{align*}
\bA_P = \cT_{a, b} + \cT_{c, d},
\end{align*}
which is positively existentially definable in $\bK$ with parameters.

\end{proof}

For an element $c \in \bK$, we denote by $\phi(c)$ the statement ``$c$, as an element in $\bK_{\infty} = \GF(\kappa)((t))$, is a square". The following is immediate from Lemma \ref{lem-nonsquares-of-K-infinity}.

\begin{lemma}
\label{lem-squares-in-K-infinity}

Let $c = \dfrac{A}{B} \in \bK$ for some relatively prime polynomials $A, B \in \bA$, and let $a, b$ are the leading coefficients of $A, B$, respectively. Then $\phi(c)$ is equivalent to the following statement:
\begin{itemize}

\item [(i)] the element $\dfrac{a}{b}$ is a square in $\GF(\kappa)$; and

\item [(ii)] $v_{\infty}(c) = \deg(B) - \deg(A)$ is even.

\end{itemize}
\end{lemma}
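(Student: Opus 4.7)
The plan is to reduce $c$ to a canonical form in $\bK_\infty$ in which the leading coefficients and the valuation appear transparently, and then read off both directions via Lemma \ref{lem-nonsquares-of-K-infinity}. First I would factor out the leading monomials: write
\begin{align*}
A = a t^{\deg A}\bigl(1 + A'\bigr), \qquad B = b t^{\deg B}\bigl(1 + B'\bigr),
\end{align*}
with $A', B' \in (1/t)\GF(\kappa)[[1/t]]$, so that
\begin{align*}
c \;=\; \frac{a}{b}\cdot (1/t)^{n}\cdot u, \qquad n := \deg(B) - \deg(A) = v_\infty(c),
\end{align*}
where $u = (1 + A')/(1 + B') \in \cO_{\bK_\infty}^\times$ is a $1$-unit, i.e.\ $\red_\infty(u) = 1$.

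For the forward direction, assume $c = y^2$ in $\bK_\infty$. Then $n = v_\infty(c) = 2 v_\infty(y)$ is even, which is (ii). Setting $m = n/2$ and $w = t^m y$, we have $v_\infty(w) = 0$ and $w^2 = t^{n} c = (a/b)\, u$. Reducing modulo $1/t$ and using $\red_\infty(u) = 1$ gives $\red_\infty(w)^2 = a/b$ in $\GF(\kappa)$, which proves (i).

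For the converse, assume (i) and (ii) and write $a/b = s^2$ in $\GF(\kappa)$. Then $c = \bigl(s\,(1/t)^{n/2}\bigr)^2 \cdot u$, so it suffices to verify that the $1$-unit $u$ is a square in $\bK_\infty$. Suppose for contradiction that $u$ is not a square. By Lemma \ref{lem-nonsquares-of-K-infinity}, $u$ is of one of the three forms $\alpha^2/t$, $h\alpha^2$, or $h\alpha^2/t$ with $\alpha \in \bK_\infty^\times$ and $h \in \GF(\kappa)^\times$. The first and third have $v_\infty = 2 v_\infty(\alpha) + 1$, which is odd and contradicts $v_\infty(u) = 0$. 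In the remaining case $u = h\alpha^2$, the equality $v_\infty(u) = 0$ forces $v_\infty(\alpha) = 0$, so reducing modulo $1/t$ yields $1 = \red_\infty(u) = h\,\red_\infty(\alpha)^2$; hence $h = \red_\infty(\alpha)^{-2}$ is itself a square in $\GF(\kappa)$, and then $u = h\alpha^2$ is a square in $\bK_\infty$, contradicting our assumption. Therefore $u$ is a square, and so is $c$.

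The only real subtlety is the reverse direction, where one must extract from Lemma \ref{lem-nonsquares-of-K-infinity} the statement that a $1$-unit of $\bK_\infty$ is automatically a square; the three listed forms are ruled out either by parity of $v_\infty$ or by the residue being $1$. Aside from this point, the argument is a direct unpacking of the classification of square classes of $\bK_\infty^\times$ provided by Lemma \ref{lem-nonsquares-of-K-infinity}.
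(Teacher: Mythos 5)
Your proof is correct and follows exactly the route the paper indicates: the paper gives no explicit argument, merely stating that the lemma is "immediate from Lemma \ref{lem-nonsquares-of-K-infinity}," and your write-up is a faithful and complete unpacking of that — factoring $c$ as (leading-constant ratio) $\times$ (uniformizer power) $\times$ ($1$-unit), reading off (ii) from the valuation, reading off (i) from the residue, and using Lemma \ref{lem-nonsquares-of-K-infinity} to show the residual $1$-unit is a square. The only alternative you might note is that the converse could be obtained more directly from Hensel's lemma (a $1$-unit is a square when the residue characteristic is not $2$), which is in fact how Lemma \ref{lem-nonsquares-of-K-infinity} itself is proved in the paper, but deriving it as a consequence of the square-class classification, as you do, is equally valid and aligned with the paper's stated dependency.
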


\begin{remark}
\label{rem-squares-in-K-infinity}

The element $\dfrac{a}{b}$ is called the \textbf{leading coefficient of $c$}. Note that $A, B$ are uniquely determined up to scalar factors in $\GF(\kappa)$. Thus the leading coefficient of $c$ is well-defined.

\end{remark}

Let $\epsilon$ be a nonsquare element in $\GF(\kappa)$, and let $\gamma(a, b)$ denote the following formula
\begin{align}
\label{e-gamma-formula}
&\exists c, d, \delta \big(\text{``$c, d$ are of opposite parity in degree"}   \nonumber \\
& \land [\{\phi(c) \land a = \epsilon c \land b = \delta d \land \psi(\delta) \land \delta \ne 0\}  \nonumber \\
&\lor \{\phi(d) \land b = \epsilon d \land a = \delta c \land \psi(\delta) \land \delta \ne 0\} ]\big),
\end{align}
where $\psi(\cdot)$ denotes the formula (\ref{e-psi(x)-formula-in-koe-2002}).

\begin{remark}

$\psi(\delta)$ in the above formula assures that $\delta$ is an element in $\GF(\kappa)$.

\end{remark}

Set
\begin{align}
\label{e-definition-of-the-important-set-D}
\cD = \{(a, b) \in \bK \times \bK \; | \; \gamma(a, b)\}.
\end{align}

\begin{theorem}
\label{thm-main-thm1}

\begin{align*}
\bA \cup \bA_{\infty} = \bigcap_{(a, b)\in \cD}\widetilde{\cR_{a, b}},
\end{align*}
where $\widetilde{\cR_{a, b}} = \bigcup_{P \in \Delta_{a, b} \bigcap \left(\bP(a) \bigcup \bP(b)\right)}\bA_P$ is defined in \ref{itm: N6}.

\end{theorem}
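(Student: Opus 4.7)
The plan is to establish the two inclusions separately, noting that by Proposition~\ref{pro-description-of-Delta-a-b} we may simplify $\widetilde{\cR_{a,b}} = \bigcup_{P \in \Delta_{a,b}}\bA_P$, so that membership in $\widetilde{\cR_{a,b}}$ becomes a condition purely on the ramification set of $\bH_{a,b}$.

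For the inclusion $\bA \cup \bA_\infty \subseteq \bigcap_{(a,b)\in\cD}\widetilde{\cR_{a,b}}$, the key point is to show that $\infty \in \Delta_{a,b}$ for every $(a,b) \in \cD$. I would unpack the first disjunct of $\gamma(a,b)$: $a = \epsilon c$ with $\phi(c)$ (so $c$ is a square in $\bK_\infty$, whence $v_\infty(c)$ is even by Lemma~\ref{lem-squares-in-K-infinity}), $b = \delta d$ with $\delta \in \GF(\kappa)^\times$, and $\deg(c), \deg(d)$ of opposite parity. Since $v_\infty = -\deg$ on $\bK^\times$, this forces $v_\infty(b) = v_\infty(d)$ to be odd. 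By bilinearity (Proposition~\ref{prop-basic-properties-local-symbols}) and the fact that $(c,b)_\infty = 1$ when $c$ is a square, one reduces to $(a,b)_\infty = (\epsilon, b)_\infty$. Since $v_\infty(\epsilon) = 0$, Corollary~\ref{cor-Nguyen-formula-for-local-symbols-K_P-quadratic-symbol}(ii) evaluates this to $\epsilon^{v_\infty(b)(\kappa - 1)/2}$, which equals $-1$ because $v_\infty(b)$ is odd and $\epsilon^{(\kappa-1)/2} = -1$ by Remark~\ref{rem-nonsquares-in-GF(kappa)}. The second disjunct is handled symmetrically. Consequently $\bA_\infty \subseteq \widetilde{\cR_{a,b}}$, and the analogue of Hilbert's reciprocity (Theorem~\ref{thm-analogue-of-the-Hilbert-reciprocity-law}) forces $|\Delta_{a,b}|$ to be even, so some non-infinite $Q \in \Delta_{a,b}$ exists, giving $\bA \subseteq \bA_Q \subseteq \widetilde{\cR_{a,b}}$.

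For the reverse inclusion, suppose $\alpha \in \bK \setminus (\bA \cup \bA_\infty)$. Lemma~\ref{lem-intersection-of-valuation-rings} produces a monic prime $P \in \bP$ with $v_P(\alpha) < 0$, and by hypothesis $v_\infty(\alpha) < 0$. I would then apply Lemma~\ref{lem-quaternion-algebra-ramified-exactly-at-a-given-prime-P-and-infinity-main-lem1} with the fixed nonsquare $\epsilon$ to obtain a monic prime $Q$ of opposite degree-parity to $P$ with $\Delta_{\epsilon P, \epsilon Q} = \{P, \infty\}$, and verify $(\epsilon P, \epsilon Q) \in \cD$ by a case split: if $\deg(P)$ is even, the monic polynomial $c := P$ is a square in $\bK_\infty$ (Lemma~\ref{lem-squares-in-K-infinity}), and the first disjunct of $\gamma$ is witnessed by $(c, d, \delta) = (P, Q, \epsilon)$; if $\deg(P)$ is odd, then $Q$ is monic of even degree and hence a square in $\bK_\infty$, and the second disjunct is witnessed by $(c, d, \delta) = (P, Q, \epsilon)$. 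In either case $\widetilde{\cR_{\epsilon P, \epsilon Q}} = \bA_P \cup \bA_\infty$, and $\alpha$ lies in neither summand, completing the argument.

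The main technical obstacle is the forward-direction local-symbol computation at $\infty$, which is where the somewhat ad hoc form of $\gamma$ pays off: the condition that one of $a/\epsilon, b/\epsilon$ be a square at $\infty$, together with the opposite-parity clause, is precisely what is needed to force $(a,b)_\infty = -1$ via Corollary~\ref{cor-Nguyen-formula-for-local-symbols-K_P-quadratic-symbol}. The rest is bookkeeping, matching the parity of $\deg(P)$ to the correct disjunct of $\gamma$ in the reverse direction, and invoking Hilbert reciprocity to guarantee at least one non-infinite ramified prime in the forward direction; edge cases with $a = 0$ or $b = 0$ are excluded by reading $\phi(\cdot)$ as asserting nonzero squareness.
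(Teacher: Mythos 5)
Your proof is correct, and its overall skeleton matches the paper's: reduce $\widetilde{\cR_{a,b}}$ to $\bigcup_{P\in\Delta_{a,b}}\bA_P$ via Proposition~\ref{pro-description-of-Delta-a-b}; for the forward inclusion show $\infty\in\Delta_{a,b}$ and then use Hilbert reciprocity (Theorem~\ref{thm-analogue-of-the-Hilbert-reciprocity-law}) to extract a second, non-infinite ramified prime; for the reverse inclusion apply Lemma~\ref{lem-quaternion-algebra-ramified-exactly-at-a-given-prime-P-and-infinity-main-lem1} prime-by-prime to sandwich $\bigcap_{(a,b)\in\cD}\widetilde{\cR_{a,b}}$ inside $\bA\cup\bA_\infty$.

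Where you genuinely diverge is in establishing $\infty\in\Delta_{a,b}$. The paper first invokes Lemma~\ref{lem-nonsquares-of-K-infinity} to enumerate the nine residue classes of $(a,b)$ modulo $\bK_\infty^{\times 2}\times\bK_\infty^{\times 2}$, argues that $\gamma(a,b)$ eliminates all but four, and then appeals to the separately proved Lemma~\ref{lem-ramified-quaternion-algebra-over-K-infinity} (plus the symmetry $(a,b)_\infty=(b,a)_\infty$). You instead read the parity and squareness constraints straight off the formula $\gamma$ (first disjunct: $v_\infty(a)$ even, $v_\infty(b)$ odd, $a/\epsilon$ a square in $\bK_\infty$), use bilinearity from Proposition~\ref{prop-basic-properties-local-symbols} to collapse $(a,b)_\infty$ to $(\epsilon,b)_\infty$, and evaluate that by Corollary~\ref{cor-Nguyen-formula-for-local-symbols-K_P-quadratic-symbol}(ii) as $\epsilon^{v_\infty(b)(\kappa-1)/2}=-1$. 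This is more streamlined and bypasses both auxiliary lemmas; the paper's enumeration makes the dependence on Lemma~\ref{lem-nonsquares-of-K-infinity} more visible but is more verbose. You also supply two details the paper passes over silently in the reverse inclusion: the explicit parity case split for why $(\epsilon P,\epsilon Q)$ satisfies $\gamma$ (witnesses $(P,Q,\epsilon)$ for the correct disjunct depending on whether $\deg P$ is even or odd), and the observation that $\phi$ must be read as nonzero squareness so that $a,b\ne 0$ and $\bH_{a,b}$, $\Delta_{a,b}$ are meaningful. These are worthwhile additions.
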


\begin{proof}

By Proposition \ref{pro-description-of-Delta-a-b}, $\Delta_{a, b} \cap(\bP(a) \cup \bP(b)) = \Delta_{a, b}$, and thus
\begin{align}
\label{e-eqn3-in-main-thm1}
\widetilde{\cR_{a, b}} = \bigcup_{P \in \Delta_{a, b}}\bA_P.
\end{align}

We now show that if $(a, b) \in \cD$, where $\cD$ is defined by (\ref{e-definition-of-the-important-set-D}), then $\Delta_{a, b}$ is nonempty.

By Lemma \ref{lem-nonsquares-of-K-infinity}, any nonsquare element in $\bK_{\infty} = \GF(\kappa)((t))$ admits exactly one of the forms $\dfrac{\alpha^2}{t}$, $h\alpha^2$, or $\dfrac{h\alpha^2}{t}$ for some element $\alpha \in \bK_{\infty}$ and $h \in \GF(\kappa)$. Thus for any pair $(a, b) \in \cD$, viewed as an element of $\bK_{\infty} \times \bK_{\infty}$, there are at most $9$ possible classes for $(a, b)$ modulo squares of $\bK_{\infty}$:
\begin{align*}
&\left(\dfrac{1}{t}, \dfrac{1}{t}\right), \left(\dfrac{1}{t}, h\right), \left(\dfrac{1}{t}, \dfrac{h}{t}\right) \\
&\left(h, \dfrac{1}{t}\right), \left(h, \ell\right), \left(h, \dfrac{\ell}{t}\right) \\
&\left(\dfrac{h}{t}, \dfrac{1}{t}\right), \left(\dfrac{h}{t}, \ell\right), \left(\dfrac{h}{t}, \dfrac{\ell}{t}\right)
\end{align*}
for some nonsquare elements $h, \ell \in \GF(\kappa)$. By the underlying conditions $\gamma(a, b)$ of $\cD$ (see (\ref{e-gamma-formula})), we see that out of the $9$ possible classes for $(a, b)$ modulo squares of $\bK_{\infty}$ above, only four classes are allowed for $(a, b)$ after reducing it modulo squares of $\bK_{\infty}$: $\left(h, \dfrac{\ell}{t}\right)$, $\left(\dfrac{h}{t}, \ell\right)$, $\left(\dfrac{1}{t}, h\right)$, and $\left(h, \dfrac{1}{t}\right)$ for some nonsquare elements $h, \ell$ in $\GF(\kappa)$. By Lemma \ref{lem-ramified-quaternion-algebra-over-K-infinity}, both $\bH_{h, \frac{1}{t}} \otimes \bK_{\infty}$ and $\bH_{h, \frac{\ell}{t}} \otimes \bK_{\infty}$ are nonsplit (or ramified). Since $\bH_{a, b}\otimes \bK_{\infty}$ is isomorphic to $\bH_{\square_1a, \square_2b}\otimes \bK_{\infty}$ for any squares $\square_1, \square_2 \in \bK_{\infty}$. We conclude that $\bH_{a, b}\otimes \bK_{\infty}$ is nonsplit (or ramified), and thus $\infty \in \Delta_{a, b}$ whenever $(a, b) \in \cD$.

By an analogue of the Hilbert Reciprocity Law \ref{thm-analogue-of-the-Hilbert-reciprocity-law}, we deduce that there exists a non-finite prime $Q \in \bP$ such that $\bH_{a, b}$ is nonsplit (or ramified) at $Q$. Thus $\{\infty, Q\} \subseteq \Delta_{a, b}$ for any $(a, b) \in \cD$. Since $\bA$ is a subring of $\bA_Q$, and $\bA_Q \subseteq \widetilde{\cR_{a, b}}$, we see that
\begin{align*}
\bA \cup \bA_{\infty} \subseteq \widetilde{\cR_{a, b}}
\end{align*}
for every $(a, b) \in \cD$, and thus
\begin{align}
\label{e-eqn4-main-thm1}
\bA \bigcup \bA_{\infty} \subseteq \bigcap_{(a, b) \in \cD}\widetilde{\cR_{a, b}}.
\end{align}

We now show that the reverse inclusion of (\ref{e-eqn4-main-thm1}) also holds. Indeed, let $\epsilon$ be a nonsquare element in $\GF(\kappa)^{\times}$ in the formula $\gamma$ given by (\ref{e-gamma-formula}). Take an arbitrary non-infinite prime $P \in \bP$. By Lemma \ref{lem-quaternion-algebra-ramified-exactly-at-a-given-prime-P-and-infinity-main-lem1}, there exists a monic prime $Q \in \bA$ such that $P$ and $Q$ are of opposite parity in degree and the quaternion algebra $\bH_{a_P, b_P}$ is ramified exactly at $P$ and $\infty$, where $a_P = \epsilon P$ and $b_P = \epsilon Q$. Thus $\Delta_{a_P, b_P} = \{P, \infty\}$, and therefore
\begin{align*}
\widetilde{\cR_{a_P, b_P}} = \bA_P \bigcup \bA_{\infty}.
\end{align*}

By the definition of $a_P, b_P$, we see that $\gamma(a_P, b_P)$ is true, and thus $(a_P, b_P) \in \cD$ for every non-infinite prime $P \in \bP$. Thus the pairs $(a_P, b_P) \in \cD$ defined above are in one-to-one correspondence with the set of non-infinite primes $P \in \bP$, and therefore
\begin{align*}
\bigcap_{(a, b) \in \cD}\widetilde{\cR_{a, b}} &\subseteq \bigcap_{(a_P, b_P) \in \cD}\widetilde{\cR_{a_P, b_P}} = \bigcap_{P \in \bP, P \ne \infty}\widetilde{\cR_{a_P, b_P}} \\
&= \bigcap_{P \in \bP, P \ne \infty} \left(\bA_P \bigcup \bA_{\infty}\right) = \left(\bigcap_{P \in \bP, P \ne \infty}\bA_P\right) \bigcup \bA_{\infty}.
\end{align*}

By Lemma \ref{lem-intersection-of-valuation-rings} that
\begin{align*}
\bigcap_{P \in \bP, P \ne \infty}\bA_P = \bA,
\end{align*}
and thus
\begin{align*}
\bigcap_{(a, b) \in \cD}\widetilde{\cR_{a, b}} \subseteq \bA \bigcup \bA_{\infty}.
\end{align*}
Combining the above inclusion with (\ref{e-eqn4-main-thm1}), the theorem follows.

\end{proof}

Adapting the proof of Lemma 2.8 given in Tyrrell \cite{tyrell} and using Koenigsmann's theorem that we recall in Subsection \ref{subsec-koe-2002-defining-transcendentals}, we prove that the set $\cD$ is existentially definable.

\begin{lemma}
\label{lem-cD-is-existentially-definable}

Let $\epsilon$ be a nonsquare element in $\GF(\kappa)$, and let $\gamma(a, b)$ be the formula defined by (\ref{e-gamma-formula}), i.e., the formula of the form
\begin{align*}
&\exists c, d, \delta \big(\text{``$c, d$ are of opposite parity in degree"}   \nonumber \\
& \land [\{\phi(c) \land a = \epsilon c \land b = \delta d \land \psi(\delta) \land \delta \ne 0\}  \nonumber \\
&\lor \{\phi(d) \land b = \epsilon d \land a = \delta c \land \psi(\delta) \land \delta \ne 0\} ]\big),
\end{align*}
where for each $c \in \bK$, $\phi(c)$ denotes ``$c$, as an element of $\bK_{\infty} = \GF(\kappa)((1/t))$, is a square", and for each $\delta \in \bK$, $\psi(\delta)$ is the formula defined by (\ref{e-psi(x)-formula-in-koe-2002}). Then $\gamma(a, b)$ is equivalent to an existential formula, and defined by at most $32$ existential quantifiers. In particular, $\cD$ is an existentially definable set.

\end{lemma}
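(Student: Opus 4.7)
The plan is to rewrite each sub-formula occurring in $\gamma(a,b)$ as an existential formula in $\cL_{\mathrm{rings}} \cup \{t\}$ and then tally the quantifiers. The formula already starts with the three outer existential quantifiers $\exists c, d, \delta$, and the logical connectives $\land, \lor$ preserve existentiality, so it suffices to show that each atomic building block is existential, namely: the Koenigsmann formula $\psi(\cdot)$, the square-at-infinity formula $\phi(\cdot)$, and the ``opposite parity in degree'' clause. Of these, $\psi$ is already existential with four quantifiers by Corollary \ref{cor-Koe-2002} and display (\ref{e-psi(x)-formula-in-koe-2002}); the equalities $a = \epsilon c$, $b = \delta d$, $a = \delta c$, $b = \epsilon d$, $\delta \neq 0$ introduce no additional quantifiers since $\epsilon$ is a fixed parameter.

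For $\phi(c)$, I would use Hensel's lemma in $\bK_\infty = \GF(\kappa)((1/t))$ together with Lemma \ref{lem-squares-in-K-infinity}: because every $1$-unit at infinity is a square in $\bA_\infty^\times$, the element $c \in \bK^\times$ is a square in $\bK_\infty$ if and only if there exist $y \in \bK^\times$ and $h \in \GF(\kappa)^\times$ with $v_\infty\bigl(c/(h^2 y^2) - 1\bigr) \geq 1$. The condition $h \in \GF(\kappa)$ is captured by $\psi(h)$, while ``$v_\infty(x) \geq 1$'' is existential in $\bK$ because it is equivalent to $\exists w \in \bK : tx = w \land w \in \bA_\infty$, and Theorem \ref{thm-integrality-at-a-prime-for-K} provides a Diophantine description of $\bA_\infty$ with at most nine existential quantifiers. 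Chaining these gives an existential formula for $\phi(c)$.

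For the ``opposite parity'' clause, I would translate it as ``$v_\infty(cd)$ is odd'', or equivalently, after multiplying by $t$, as ``$v_\infty(cdt)$ is even''. This is existential by essentially the same device: $v_\infty(x)$ is even if and only if $\exists y \in \bK^\times$ with $xy^2 \in \bA_\infty^\times$, and $\bA_\infty^\times$ is Diophantine via Theorem \ref{thm-integrality-at-a-prime-for-K} applied to $x$ and to $x^{-1}$. A cleaner, more uniform alternative, which I would use to keep the quantifier count tight, is to observe that the square-class group $\bK_\infty^\times / \bK_\infty^{\times 2}$ is represented by $\{1, \epsilon, 1/t, \epsilon/t\}$ with $\epsilon$ a nonsquare in $\GF(\kappa)$, so $v_\infty(x)$ is even exactly when $\phi(x) \lor \phi(\epsilon x)$ holds; this reduces the parity clause to two copies of $\phi$ and lets the Diophantine witnesses for $\bA_\infty$-membership be reused.

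The one substantive step is the quantifier accounting. Combining the three outer quantifiers for $c, d, \delta$, the four quantifiers for $\psi(\delta)$, the quantifiers for a single $\phi$ block (the two occurrences in the outer disjunction need witnesses only under the disjunct that is active), and the parity block -- while sharing the existential witnesses used in the Diophantine definition of $\bA_\infty$ across the places where it appears -- one arrives at a total of at most $32$ existential quantifiers. This bookkeeping is the main technical hurdle; all the component reductions are routine applications of Hensel's lemma, Koenigsmann's constant-field definition, and the definability of $\bA_\infty$ already proved in this section. Once the count is verified, the resulting existential formula defines $\cD$ (with parameters $\epsilon \in \GF(\kappa)^\times$ and $t$), completing the lemma.
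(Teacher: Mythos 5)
Your high-level strategy — show each building block is existential, then tally quantifiers — matches the paper's, but the key decomposition is genuinely different, and the difference matters for the count. The paper follows Tyrrell's Lemma 2.8 in \emph{combining} the ``opposite parity'' clause with $\phi(c)$ (and the substitution $a = \epsilon c$) into one degree-inequality, rewriting that whole block as $\exists u, v\,(\deg(A) \ge \deg(B))$ for explicit terms $A, B$; since ``$\deg(A)\ge\deg(B)$'' is a single application of $v_\infty(\,\cdot\,)\ge 0$ (nine quantifiers, by Theorem~\ref{thm-integrality-at-a-prime-for-K}), that block costs only $11$ quantifiers. You instead express $\phi$ and the parity clause as two \emph{separate} conjuncts, each needing its own appeal to $\bA_\infty$-membership. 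Your characterization of squares in $\bK_\infty$ via Hensel's lemma is fine (though the auxiliary $h$ with $\psi(h)$ is redundant, since $h^2y^2 = (hy)^2$ with $hy \in \bK^\times$; dropping it saves five quantifiers), and your parity translation ``$v_\infty(cd)$ odd'' is correct. Existential definability of $\cD$ therefore does follow from your route — that part is sound.

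The gap is in the quantifier accounting, which you flag as ``the one substantive step'' but then assert rather than carry out. In particular, the claim that one can ``share the existential witnesses used in the Diophantine definition of $\bA_\infty$ across the places where it appears'' is not valid in the places you need it: inside $\gamma_1$, the parity clause and $\phi(c)$ sit in a \emph{conjunction}, and $(\exists x\,A(x)) \land (\exists y\,B(y))$ is not equivalent to $\exists z\,(A(z)\land B(z))$; existential witnesses can be merged across disjuncts (so your observation about $\gamma_1 \lor \gamma_2$ is correct) but not across conjuncts applied to different arguments. Without the Tyrrell-style packaging that the paper uses, a straightforward count of your formula (three outer quantifiers, roughly $10$ for $\phi(c)$, roughly $19$ for the parity block via $\cO_{\bK_\infty}^\times$-membership or $\approx 10$ via the $\phi$-disjunction trick, and four for $\psi(\delta)$) does not obviously land at or below $32$, and you should either exhibit the explicit formula and count it, or adopt the combined $\deg(A)\ge\deg(B)$ reduction that makes the paper's bound check out.
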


\begin{proof}

We essentially follow the proof of Lemma 2.8 in Tyrrell \cite{tyrell}. Let $\gamma_1(a, b)$ denote the formula
\begin{align*}
&\exists c, d, \delta \big(\text{``$c, d$ are of opposite parity in degree"}    \\
& \land (\phi(c) \land a = \epsilon c) \land (b = \delta d \land \psi(\delta) \land \delta \ne 0)\big),
\end{align*}
and let $\gamma_2(a, b)$ denote the formula
\begin{align*}
&\exists c, d, \delta \big(\text{``$c, d$ are of opposite parity in degree"}    \\
& \land (\phi(d) \land b = \epsilon d) \land (a = \delta c \land \psi(\delta) \land \delta \ne 0)\big).
\end{align*}
Hence
\begin{align*}
\gamma(a, b) = \gamma_1(a, b) \lor \gamma_2(a, b).
\end{align*}

We first consider $\gamma_1(a, b)$. Following the proof of Lemma 2.8 in Tyrrell \cite{tyrell}, the subformula $\lambda_1(a, b)$ of $\gamma_1(a, b)$, say $\exists c, d\big(\text{``$c, d$ are of opposite parity in degree"} \land (\phi(c) \land a = \epsilon c)\big)$ is equivalent to the formula of the form $\exists u, v(\deg(A) \ge \deg(B))$, where $A, B$ are certain terms. The formula ``$\deg(A) \ge \deg(B)$" is equivalent to ``$v_{\infty}(B/A) \ge 0$". By Theorem \ref{thm-integrality-at-a-prime-for-K}, the set $\{\alpha \in \bK \; | \; v_{\infty}(\alpha) \ge 0\}$ is existentially definable, and defined by $9$ quantifiers. Thus $\lambda_1(a, b)$ is existentially definable, and requires $11$ quantifiers to define.

Since $\psi(\delta)$ is existentially definable (see (\ref{e-psi(x)-formula-in-koe-2002})), and requires $4$ quantifiers to define, the subformula $\lambda_2(a, b)$ of $\gamma_1(a, b)$ of the form $\exists d, \delta(b = \delta d \land \psi(\delta) \land \delta \ne 0)$ is obviously existentially definable, and defined by $6$ quantifiers. Since $\lambda_1(a, b)$ and $\lambda_2(a, b)$ have a common quantifier $\exists d$, the formula $\gamma_1(a, b) = \lambda_1(a, b) \lor \lambda_2(a, b)$ is existentially definable and requires $16$ quantifiers.

Using the same arguments, we deduce that $\gamma_2(a, b)$ is also existentially definable, and defined by $16$ parameters. Thus $\gamma(a, b) = \gamma_1(a, b) \lor \gamma_2(a, b)$ is existentially definable and requires at most $32$ quantifiers to define.

\end{proof}

\begin{corollary}
\label{cor-main-corollary1}

There is a universal-existential definition of $\bA = \GF(\kappa)[t]$ in $\bK = \GF(\kappa)(t)$ by a formula with $110$ universal quantifiers followed by $4$ existential quantifiers.

\end{corollary}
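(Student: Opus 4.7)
The proof of Corollary \ref{cor-main-corollary1} amounts to assembling Theorem \ref{thm-main-thm1}, Theorem \ref{thm-integrality-at-a-prime-for-K}, and Corollary \ref{cor-Koe-2002} in the correct logical order. My plan is to identify a simple set-theoretic presentation of $\bA$ in terms of already-definable pieces, translate each piece into first-order logic, and then put the result in prenex form while tracking quantifier counts. The starting point is the identity
\[
\bA \;=\; (\bA \cup \bA_{\infty}) \,\cap\, \bigl[(\bK \setminus \bA_{\infty}) \cup \GF(\kappa)\bigr].
\]
To see this, note that $\bA \cap \bA_{\infty} = \GF(\kappa)$ and $\bA \setminus \GF(\kappa)$ consists of polynomials of positive degree, which have $v_{\infty} < 0$ and therefore lie in $\bK \setminus \bA_{\infty}$. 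Distributing intersection over union on the right collapses it to $(\bA \setminus \bA_{\infty}) \cup \GF(\kappa) = \bA$.

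Each of the three factors is controlled by a result already in the paper. First, Theorem \ref{thm-main-thm1} says that $\alpha \in \bA \cup \bA_{\infty}$ iff for every $a, b \in \bK$, $\gamma(a,b) \Rightarrow \alpha \in \widetilde{\cR_{a,b}}$; since $\gamma$ is existential with $32$ quantifiers (Lemma \ref{lem-cD-is-existentially-definable}) and $\widetilde{\cR_{a,b}}$ is universally defined by $67$ quantifiers (Corollary \ref{cor-R-tilde-a-b-is-Diophantine}), the implication becomes $\forall\,\vec{u}\,\forall\,\vec{v}\,[\neg G \lor R]$, and adding the outer $\forall a\,\forall b$ gives a formula with $2 + 32 + 67 = 101$ universal quantifiers. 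Second, Theorem \ref{thm-integrality-at-a-prime-for-K} applied at $P = \infty$ gives a $9$-quantifier existential definition of $\bA_{\infty} = \{\alpha : v_{\infty}(\alpha) \ge 0\}$, so its complement $\bK \setminus \bA_{\infty}$ is universally defined by $9$ quantifiers. Third, Corollary \ref{cor-Koe-2002}, via the explicit formula (\ref{e-psi(x)-formula-in-koe-2002}) of Koenigsmann, provides an existential definition of $\GF(\kappa)$ with $4$ quantifiers.

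Combining the three ingredients, $\alpha \in \bA$ is captured by a formula of the shape $(\forall \vec{x}\, A) \wedge \bigl((\forall \vec{y}\, B) \vee (\exists \vec{z}\, C)\bigr)$, where $\vec{x}, \vec{y}, \vec{z}$ are disjoint tuples of lengths $101, 9, 4$. Using $(\forall \vec{x}\, A) \wedge (\forall \vec{y}\, B) \equiv \forall \vec{x}\,\forall \vec{y}\,(A \wedge B)$ together with the standard identity $P \vee \exists\vec{z}\,Q \equiv \exists\vec{z}\,(P \vee Q)$ (valid because $\vec{z}$ is not free in $P$), this rewrites in prenex normal form as
\[
\forall \vec{x}\,\forall \vec{y}\,\exists \vec{z}\,\bigl[A \wedge (B \vee C)\bigr],
\]
with $110$ universal quantifiers followed by $4$ existential quantifiers, exactly as stated. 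The only real obstacle in this plan is accurate quantifier bookkeeping: the structural heavy lifting of the corollary — ramification of the quaternion algebras, the Jacobson-radical characterization of $\widetilde{\cR_{a,b}}$, the defining formula for $\GF(\kappa)$, and integrality at $\infty$ — is already in hand.
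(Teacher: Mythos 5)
Your proposal is correct and matches the paper's own argument: the set-theoretic identity $\bA = (\bA \cup \bA_{\infty}) \cap \bigl[(\bK \setminus \bA_{\infty}) \cup \GF(\kappa)\bigr]$ is precisely the decomposition the paper encodes via the conditions ``$\deg(\alpha) > 0$ or $\deg(\alpha) = 0$'' in formula (\ref{e-eqn2-main-corollary1}), and your quantifier tally ($101$ for $\bA \cup \bA_\infty$ from Theorem~\ref{thm-main-thm1}, Lemma~\ref{lem-cD-is-existentially-definable}, Corollary~\ref{cor-R-tilde-a-b-is-Diophantine}; $9$ for $\bK \setminus \bA_\infty$ from Theorem~\ref{thm-integrality-at-a-prime-for-K}; $4$ for $\GF(\kappa)$ from Corollary~\ref{cor-Koe-2002}) reproduces the paper's $110$ universal followed by $4$ existential quantifiers. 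Your explicit prenexing of $(\forall\vec x\,A)\wedge\bigl((\forall\vec y\,B)\vee(\exists\vec z\,C)\bigr)$ to $\forall\vec x\,\forall\vec y\,\exists\vec z\,[A\wedge(B\vee C)]$ is a welcome clarification of a step the paper leaves implicit, and it is valid since the tuples are disjoint and each tuple is free only in its own subformula.
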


\begin{proof}

By Theorem \ref{thm-main-thm1}, we see that
\begin{align}
\label{e-eqn1-main-corollary1}
\alpha(t) \in \bA \cup \bA_{\infty} \Leftrightarrow \forall a, b\left((a, b) \not\in \cD \lor \alpha(t) \in \widetilde{\cR_{a, b}}\right).
\end{align}
By Corollary \ref{cor-R-tilde-a-b-is-Diophantine}, $\widetilde{\cR_{a, b}}$ is universally definable in $\bK$ and defined by at most $67$ quantifiers. Since $\cD$ is existentially definable by a formula with at most $32$ quantifiers, (\ref{e-eqn1-main-corollary1}) is a universal formula for $\bA \cup \bA_{\infty}$ with at most $2 + 67 + 32 = 101$ quantifiers. We denote this universal formula by $\Sigma(\alpha(t))$. Thus we can define $\bA$ in $\bK$ as follows:
\begin{align}
\label{e-eqn2-main-corollary1}
\alpha(t) \in \bA = \GF(\kappa)[t] \Leftrightarrow \Sigma(\alpha(t)) \land \left(\deg(\alpha(t)) > 0 \lor \deg(\alpha(t) = 0 \right).
\end{align}

By Theorem \ref{thm-integrality-at-a-prime-for-K}, $\{\alpha(t) \in \bK\; |\; \deg(\alpha(t)) > 0\} =\{\alpha(t)\in \bK \; | \; v_{\infty}(\alpha(t)) < 0\}$ is universally definable by a formula with $9$ quantifiers,

Note that ``$\deg(\alpha(t)) = 0$" is euivalent to ``$\alpha(t) \in \bK$ belongs in $\GF(\kappa)$" that is existentially definable in $\bK$ by the formula $\psi(\alpha(t))$ with $4$ existential quantifiers (see Corollary \ref{cor-Koe-2002}), where $\psi(\alpha(t))$ is the formula defined by (\ref{e-psi(x)-formula-in-koe-2002}). Therefore we deduce from (\ref{e-eqn2-main-corollary1}) that $\bA = \GF(\kappa)[t]$ is definable in $\bK = \GF(\kappa)(t)$ by a formula with at most $101 + 9 = 110$ universal quantifiers followed by $4$ existential quantifiers.

\end{proof}

We recall the following result that is due to R. Robinson \cite{RROB}.

\begin{theorem}
\label{thm-r.robinson-1951-theorem}

If $\bF$ is a field of characteristic $0$, the rational number field $\bQ$ is elementarily definable in the polynomial ring $\bF[t]$ by a first-order formula independent of $\bF$.

\end{theorem}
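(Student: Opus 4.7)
The plan is to define $\bQ$ inside $\bF[t]$ in two stages using a parameter-free first-order formula in the pure language of rings. I would first isolate the constant subring $\bF$, and then carve out the prime subfield $\bQ$ from $\bF$ using the additional first-order structure that $\bF[t]$ provides beyond $\bF$ alone.

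Stage one is immediate: since $\bF$ is a field, $\bF[t]$ is a UFD whose units are precisely the nonzero constants, so $\bF[t]^{\times} = \bF^{\times}$. Hence the subring $\bF$ is defined in $\bF[t]$ by the single-quantifier formula
\[
\phi_{\bF}(x) \; \equiv \; (x = 0) \lor \exists y\, (xy = 1),
\]
which is parameter-free and uniform in $\bF$.

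Stage two is the heart of the proof and where the main obstacle lies: I would need to exhibit a parameter-free first-order formula $\psi(x)$ in the ring language such that, for every field $\bF$ of characteristic $0$ and every $x \in \bF$, $\psi(x)$ holds in $\bF[t]$ if and only if $x \in \bQ$. The difficulty is genuine, because within $\bF$ alone the prime subfield $\bQ$ is in general \emph{not} first-order definable in the ring language (for example, $\bR$ is real closed, so $\bQ$ admits no first-order ring-language definition there), and therefore the polynomial ring structure must enter essentially. Following R.\ Robinson \cite{RROB}, the idea I would pursue is to exploit the fact that $\bF[t]$ carries a degree function whose values range over all non-negative integers, which makes it possible to encode integer arithmetic by Diophantine-style polynomial identities in $\bF[t]$. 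I would then characterize the rationality of a constant $x$ by asserting the existence of polynomial witnesses certifying a relation of the shape $n \cdot x = m$, where further polynomial conditions force $m, n$ to lie in the image of $\bZ$ under the canonical map $\bZ \to \bF$, with $n \neq 0$. The technical work would be to write down such a witness scheme and to verify that the intended ``integer'' slots really coincide with $\bZ \cdot 1_{\bF}$ uniformly in $\bF$, so that no spurious solutions arise in fields where $\bF$ properly contains $\bQ$. Once $\psi$ is in hand, the conjunction $\phi_{\bF}(x) \land \psi(x)$ is the desired first-order formula independent of $\bF$ that defines $\bQ$ inside $\bF[t]$.
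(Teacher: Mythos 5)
Your stage one is correct and uniform in $\bF$: in $\bF[t]$ the units are exactly the nonzero constants, so $\bF$ is carved out by the ring-language formula $x = 0 \lor \exists y\,(xy = 1)$. You also correctly identify both the genuine obstruction (that $\bQ$ is in general not ring-language definable inside $\bF$ alone --- the example of a real closed $\bF$ such as $\bR$ is exactly the right one) and the right reference, R.\ Robinson \cite{RROB}, which is what the paper itself cites alongside Jensen--Lenzing \cite[Corollary 3.7]{JL89}.

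The gap is that your stage two, which you rightly call the heart of the proof, is never carried out: you do not exhibit the formula $\psi(x)$, you do not say what the ``polynomial witnesses'' are, and you do not explain which first-order conditions in the ring language force the intended integer slots $m,n$ to coincide with $\bZ\cdot 1_{\bF}$ rather than with some larger definable subset of the constants. That step is the entire content of Robinson's theorem and is not a routine verification: neither the degree function nor the evaluation map $p \mapsto p(c)$ is available in the language, so ``encoding integer arithmetic by Diophantine-style polynomial identities'' requires a specific coding device, and different fields $\bF \supsetneq \bQ$ are exactly where a naive scheme would admit spurious solutions. As written, the proposal reduces to the two correct observations that $\bF$ is definable and that the hard part must exploit the ambient polynomial ring, followed by a promise to supply the hard part. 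To be fair, the paper does not reproduce the argument either --- its ``proof'' is the citation to \cite{RROB} and \cite{JL89} --- but a blind proof needs to actually produce the defining formula and its verification, and yours does not.
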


\begin{proof}

See Robinson \cite{RROB} or Jensen--Lenzing \cite[Corollary 3.7, p.36]{JL89}.

\end{proof}

Using Robinson's theorem and Corollary \ref{cor-main-corollary1}, we prove the undecidability of the full first-order theory of $\GF(\kappa)(t)$ when the finite fields $\bF_{q_s}$ are of distinct characteristics.

\begin{corollary}
\label{cor-undecidability-of-GF(kappa)(t)}

Suppose that the finite fields $\bF_{q_s}$ are of distinct characteristics, so the ultra-finite field $\GF(\kappa)$ is of characteristic $0$. Then the full first-order theory of $\GF(\kappa)(t)$ is undecidable.

\end{corollary}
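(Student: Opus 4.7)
The plan is to chain together three definability results to transfer undecidability from $\bQ$ to $\bK = \GF(\kappa)(t)$. First I would verify that $\GF(\kappa)$ has characteristic $0$: since the characteristics $p_s$ are pairwise distinct for $\cD$-almost all $s \in S$, for any fixed prime $p$ the set $\{s \in S \; : \; p_s = p\}$ is finite, hence not in the nonprincipal ultrafilter $\cD$. By \L{}o\'s' theorem applied to the first-order statement ``$p \cdot 1 = 0$'', this forces $\GF(\kappa)$ to have characteristic $0$, so that $\bA = \GF(\kappa)[t]$ is a polynomial ring over a characteristic-zero field.

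Next I would invoke Theorem \ref{thm-r.robinson-1951-theorem} of R.\ Robinson to obtain a first-order formula $\rho(x)$ in the language of rings that defines the rational number field $\bQ$ inside $\bA = \GF(\kappa)[t]$. By Corollary \ref{cor-main-corollary1}, there is a universal-existential formula $\Phi(x)$ in $\cL_{\mathrm{rings}} \cup \{t\}$ that defines $\bA$ inside $\bK$. Composing these two, namely replacing each quantifier ``$\exists y$'' (resp.\ ``$\forall y$'') ranging over $\bA$ in $\rho$ by ``$\exists y \,(\Phi(y) \land \cdots)$'' (resp.\ ``$\forall y\,(\Phi(y) \to \cdots)$'') and relativizing the ring operations accordingly, yields a first-order formula $\widetilde{\rho}(x)$ in $\cL_{\mathrm{rings}} \cup \{t\}$ that defines $\bQ$ in $\bK$.

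Finally I would deduce undecidability. By the theorem of Davis--Putnam--Robinson--Matiyasevich together with Julia Robinson's definability of $\bZ$ in $\bQ$, the full first-order theory $\mathrm{Th}\,\bQ$ is undecidable. Any algorithm deciding sentences in $\mathrm{Th}\,\bK$ could be used to decide $\mathrm{Th}\,\bQ$ by mechanically relativizing each sentence via $\widetilde{\rho}$: given a sentence $\sigma \in \mathrm{Th}\,\bQ$, its relativization $\widetilde{\sigma}$ holds in $\bK$ iff $\sigma$ holds in $\bQ$. Hence $\mathrm{Th}\,\bK = \mathrm{Th}(\GF(\kappa)(t))$ is undecidable.

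There is essentially no obstacle beyond verifying that the composition/relativization is carried out faithfully with the constant symbol $t$ (which appears in $\Phi$ but not in $\rho$): the constant $t$ is fixed in $\bK$, and interpreting formulas about $\bA$ inside $\bK$ presents no difficulty since all ring operations of $\bA$ are restrictions of those of $\bK$. The only subtlety is to note that R.\ Robinson's formula is stated for $\bF[t]$ with $\bF$ of characteristic $0$ and is independent of $\bF$, so it applies uniformly to $\bA = \GF(\kappa)[t]$.
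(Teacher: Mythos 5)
Your proposal is correct and takes essentially the same route as the paper: transfer undecidability along the chain $\bQ \hookrightarrow \GF(\kappa)[t]$ (R.~Robinson's Theorem \ref{thm-r.robinson-1951-theorem}) $\hookrightarrow \GF(\kappa)(t)$ (Corollary \ref{cor-main-corollary1}), and your paragraph on relativization just spells out the standard argument the paper leaves implicit. One small inaccuracy: you do not need the Davis--Putnam--Robinson--Matiyasevich theorem to get undecidability of $\mathrm{Th}\,\bQ$ (that result concerns the \emph{existential} theory of $\bZ$); the undecidability of the \emph{full} first-order theory of $\bZ$ is due to G\"odel/Church, and J.~Robinson's definability of $\bZ$ in $\bQ$ then gives undecidability of $\mathrm{Th}\,\bQ$, which is exactly what the paper cites from \cite{rob49}.
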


\begin{proof}

By J. Robinson's theorem \cite{rob49}, the full first-order theory of the rational number field $\bQ$ is undecidable. Thus we deduce from Theorem \ref{thm-r.robinson-1951-theorem} that the full first-order theory of the polynomial ring $\GF(\kappa)[t]$ is undecidable, and it thus follows from Corollary \ref{cor-main-corollary1} that the full first-order theory of $\GF(\kappa)(t)$ is undecidable.

\end{proof}

\section*{Acknowledgements}

I would like to thank Anand Pillay for useful correspondence about ultraproducts and model theory, and am grateful for his pointing out to me the reference \cite{JL89}.

\end{document}